\documentclass[10pt,a4paper,reqno]{amsart}
\usepackage{graphicx,multirow,array,amsmath,amssymb,enumerate,tikz-cd,mathtools}

\newtheorem{thm}{Theorem}[section]
\newtheorem{lem}[thm]{Lemma}
\newtheorem{cor}[thm]{Corollary}
\newtheorem{prop}[thm]{Proposition}
\newtheorem{claim}[thm]{Claim}

\newtheorem{notation}[thm]{Notation}
\newtheorem{rmk}[thm]{Remark}

\theoremstyle{definition}
\newtheorem{exam}[thm]{Example}

\numberwithin{equation}{section}
\numberwithin{figure}{section}

\newcommand{\Z}{\mathbb{Z}}
\newcommand{\C}{\mathbb{C}}
\newcommand{\CP}{\mathbb{CP}}
\newcommand{\CPbar}{\overline{\mathbb{CP}}}
\newcommand{\ep}{\epsilon}
\newcommand{\arrowr}{\rightarrow}

\begin{document}

\title[Splitting singular fibers]
{Splitting singular fibers with periodic monodromies and their monodromy factorization}

\author[H. Kim]{Hyunggi Kim}
\address{Research Institute of Mathematics, Seoul National University, Korea}
\email{hyunggi@snu.ac.kr}

\keywords{Low-dimensional topology, Lefschetz Fibration, monodromy factorization, vanishing cycles, splitting}
\thanks{Mathematics Subject Classification 2010: 14D06, 14D05, 57M50}

\maketitle

\begin{abstract}
A Lefschetz fibration is a smooth 4-manifold admitting a surface bundle structure over a surface except at finitely many singular fibers, whose singularities are only of nodal type.
From the structure of the singular fibers, the monodromy of each singular fiber is given by a right-handed Dehn twist along a curve, called a vanishing cycle, in the fiber.
Collecting all monodromy data from the singular fibers, we obtain a monodromy factorization into right-handed Dehn twists, which completely determines the Lefschetz fibration.

In this paper, we construct a fibration with one singular fiber whose monodromy is periodic (that is, the monodromy homeomorphism is isotopic to a periodic map).
Following the idea of Matsumoto, we give a splitting of the singular fiber into Lefschetz fibers and determine their vanishing cycles for a collection of periodic monodromies.
We describe the construction of the splitting singular fibers and the procedure for reading vanishing cycles using two branched covering structures of the fibers.
We also give splittings of singular fibers into Lefschetz fibers related to a family of periodic actions and determine their vanishing cycles.
\end{abstract}

\section{Introduction} \label{sec:intro}
One method for studying smooth 4-manifolds is to find a projection map onto a surface and examine its surface bundle structure.
When the map has critical points, each fiber containing critical points is called a \textit{singular fiber}.
The simplest singularity of a singular fiber is a nodal singularity; that is,
if a local coordinate $(z_1, z_2)\in \C^2$ of the total space is chosen, then the map is given by $f(z_1,z_2)=z_1^2+z_2^2$.
Such a map which has only nodal singularities is called a \textit{Lefschetz fibration}.
Lefschetz fibrations have become important objects in the study of symplectic 4-manifolds, following the works of \cite{D} and \cite{Go}.

A singular fiber of a Lefschetz fibration is closely related to a right-handed Dehn twist along a simple closed curve in a surface, known as a \textit{vanishing cycle}.
By collecting all information from the singular fibers, we obtain a \textit{monodromy representation}.
Conversely, starting from a monodromy representation, we can construct a Lefschetz fibration,
and under some appropriate equivalence relations, this correspondence is bijective.

Though a monodromy factorization is a convenient tool for creating new examples using various relations in mapping class groups,
finding a monodromy factorization from a given Lefschetz fibration is not easy, because there is no general method for transporting vanishing cycles corresponding to each singularity to a single fiber,
called a \textit{reference fiber}.

There have been several studies attempting to find a monodromy factorization from a given Lefschetz fibration.
Matsumoto \cite{M1} found vanishing cycles in the genus-3 Lefschetz fibration on the Fermat surface of degree 5,
and later Hamada and Hayano \cite{HH} gave a system of vanishing cycles for a genus-3 Lefschetz pencil on the four-torus constructed by Smith.
Both papers mainly use complex hypersurfaces in $\CP^3$, so they possess canonical coordinates for calculating vanishing cycles.
Ishizaka \cite{I} used a hyperelliptic branched cover along a complex curve in $\CP^1 \times D^2$ defined by an equation,
and found vanishing cycles for some building blocks of Lefschetz fibrations of fiber genus $g$ with hyperelliptic monodromy.
Each of these papers utilizes a canonical coordinate system from the ambient space or the product structure,
along with its lifting under the double branched cover provided by the construction.
If a given Lefschetz fibration has no canonical coordinate system, then it is quite difficult to trace the vanishing cycles.

More recently, Sakallı and Van Horn-Morris \cite{SV1,SV2} studied splittings of singular fibers in genus-two fibrations arising from explicit algebraic equations. 
By constructing suitable perturbations of these fibrations, they obtain genus-two Lefschetz fibrations and determine the corresponding vanishing cycles and monodromy factorizations. 
In \cite{SV1}, the fibrations are given by explicit hyperelliptic equations, and the vanishing cycles are studied through the associated double branched-cover descriptions. 
In \cite{SV2}, they consider another family of algebraic fibrations equipped with an involution inherent in the defining polynomial, and analyze the resulting double branched-cover structure over the quotient fibration.

An exceptional method was provided by Matsumoto \cite{M3}.
He constructed a genus-2 Lefschetz fibration whose total space is diffeomorphic to $S^2\times T^2 \# 4\CPbar^2$.
He first considered two involutions $\sigma$ and $\tau$ on a genus-2 surface $\Sigma_2$ and $S^2$, respectively,
and the resolution of the quotient space $\Sigma_2 \times S^2/ \sigma \times \tau$.
Composing this with the projection map, we obtain a map $f: X \arrowr S^2$ with two singular fibers.
He split the singular (but not Lefschetz-type) fibers into four Lefschetz singular fibers
and identified the vanishing cycles by using the method of introducing four local charts for each singular fiber.
Since all critical points occur in only one of these charts,
the main calculation can be performed in some open subset of $\C^2$.

This paper is inspired by \cite{M3}.
First, instead of two involutions, consider a particular degree-$(2n+1)$ map $\sigma_n$ on $\Sigma_{3n+1}$ and $2\pi/n$ rotation map $\tau_n$ on $S^2$.
Similarly, by taking the quotient and the resolution, we obtain a complex surface $X$ and a map $\phi: X \arrowr S^2$.
Then the total space $X$ is diffeomorphic to $S^2 \times T^2 \# (4n+5)\CPbar^2$.
In a general setting, we can determine the diffeomorphism type of the total space, as stated in Theorem~\ref{thm:total}.
Now, take a perturbation $\phi_\ep: X_\ep \arrowr S^2$ of $\phi: X \arrowr S^2$ so that
$\phi_\ep$ has only Lefschetz singular fibers.
Finally, we calculate the monodromy factorization of this Lefschetz fibration.

\begin{thm}\label{thm:main}
For each $n\ge 1$, there is a genus $(3n+1)$-Lefschetz fibration on $(S^2 \times T^2) \# (4n+5)\CPbar^2$ over $S^2$ whose monodromy factorization is given by
\begin{align}
(t_{d} t_{d'})^2 \left(\prod_{j=12}^7 \prod_{i=n}^1 t_{c_{i,j}} \right)  \prod_{i=1}^n (t_{d_i} t_{d'_i})^2 \cdot t_{\delta} \left(\prod_{j=6}^1 \prod_{i=n}^1 t_{c_{i,j}} \right)  =1 
\end{align}
where $t_c$ denotes the right-handed Dehn twist along a curve $c$.
The exact description of vanishing cycles $c_{i,j}$, $\delta, d, d', d_i,$ and $d'_i$ is described in \eqref{eq:vc_comb1}, \eqref{eq:dd'}, and \eqref{eq:vc_comb2}.
\end{thm}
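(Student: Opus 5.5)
The plan follows Matsumoto's construction in \cite{M3}, adapted to the periodic map $\sigma_n$. I first fix an explicit model of $\Sigma_{3n+1}$ as a cyclic branched cover of $S^2$ (an affine plane curve $y^m = \prod (x-a_k)^{e_k}$) on which $\sigma_n$ acts by a root of unity. Then I form $\Sigma_{3n+1}\times S^2/(\sigma_n\times\tau_n)$ and resolve its cyclic quotient singularities by Hirzebruch--Jung strings. This gives $\phi\colon X\to S^2=S^2/\tau_n$, whose two singular fibers sit over the images of the poles of $\tau_n$.

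The topology of $X$ I take from Theorem~\ref{thm:total}, which says $X\cong (S^2\times T^2)\#(4n+5)\CPbar^2$. As a check I will recompute the Euler characteristic. Riemann--Hurwitz on the quotient and the resolution contributions give $e(X)$. This must equal $e(S^2)e(\Sigma_{3n+1})$ plus the total number of Lefschetz critical points, namely $8+12n+1$ (from the word length $4+12n+4n+1$), and $2(2-2(3n+1))+12n+9 = 4n+5$ agrees with $e(S^2\times T^2)+4n+5$.

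Next I perturb. Near each singular fiber, I cover the resolved neighbourhood by local charts on which the fibration is monomial, $z_1^{a}z_2^{b}$. As in \cite{M3}, I arrange that all critical points of the deformation $\phi_\ep=\phi+\ep g$ lie in a single chart. The deformed fiber $\{F(z_1,z_2)=t\}$ in that chart is then an explicit plane curve.

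I read it by projecting to one coordinate, which exhibits the nearby fiber as a branched cover of a disk. In the same way, the whole smooth fiber $\Sigma_{3n+1}$ is a cyclic branched cover of $S^2$ via $\Sigma_{3n+1}\to\Sigma_{3n+1}/\sigma_n$. These two branched covering structures let me identify the local picture with the reference fiber. As $t$ moves to each critical value, the branch points collide in pairs, and the vanishing cycle is the lift of the arc joining them. I choose the deformation so that the branch points move in an explicit way (e.g.\ roots of $z^{k}=t$ rotating), and I take the critical values ordered along a fixed system of paths.

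Over the first singular fiber (monodromy of order $n$ type, from the pole where $\tau_n$ fixes), I expect $6n$ nodes with lifted arcs $c_{i,1},\dots,c_{i,6}$, together with $\delta$ from an extra node at the central component. Over the second I expect $c_{i,7},\dots,c_{i,12}$ together with the pairs $d,d'$ and $d_i,d_i'$. I will write these curves explicitly in the combinatorial model and label them as in \eqref{eq:vc_comb1}, \eqref{eq:dd'}, \eqref{eq:vc_comb2}.

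Finally, I verify the factorization. The product of the Dehn twists around each original singular fiber must equal the periodic monodromy (a power of $\sigma_n$ or its inverse), and the two local monodromies multiply to $1$. I will check this with chain and star relations, e.g.\ $(t_{c_1}\cdots t_{c_k})^{k+1}$ and its variants, which realize periodic maps. The factorization then follows from the general monodromy correspondence.

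The main obstacle is the transport step: identifying the locally computed vanishing cycles, which are arcs between colliding branch points in the chart, with specific curves on a single reference fiber in a way that is consistent across all charts and both singular fibers. I would attack it by tracking, along the chosen paths, how the two branched-cover structures are glued along the boundary of the chart neighbourhood. If necessary, I would confirm the identification a posteriori through the relation check above.
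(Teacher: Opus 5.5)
Your outline has the right architecture: quotient, Hirzebruch--Jung resolution, a chart-wise perturbation with all off-zero critical points in one chart, vanishing arcs read through branched covers, then Hurwitz moves and a global conjugation. However, both ingredients you rely on for the actual computation fail as stated.

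\emph{The covering structure.} The global cover $\Sigma_{3n+1}\to\Sigma_{3n+1}/\sigma$ is not a cover of $S^2$. $\sigma_{2n+1}$ has period $2n+1$ and three fixed points, so Riemann--Hurwitz, $6n=(2n+1)(2h-2)+3\cdot 2n$, gives $h=1$. This is exactly why $T^2$ appears in Theorem~\ref{thm:total}. The paper's two covering structures are different. The first is the hyperelliptic quotient $\Phi(x,y)=(x^2,y)$ of the local fiber onto a disk. The second is the projection $\pi_y$, which becomes a $(2n+1)$-fold branched cover only after it is extended over the charts $U_{1,j}$ (resp.\ over the $U_{3,j}$, in the coordinate $\hat y=1/y$). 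Its moving branch points $b_l(t)$ come from the discriminant of $\psi_n(X,y)-t$ in $X$. Each vanishing arc is then determined by which $b_{l(i,j)}$ sweeps through the colliding pair along the reversed Hurwitz path.

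\emph{The perturbation.} $\phi_\ep=\phi+\ep g$ cannot split anything if $g$ is holomorphic on the fixed surface $\phi^{-1}(D^2)$. Such a $g$ is constant on the compact connected fibers, so $\phi+\ep g$ has the same fibers as $\phi$. If instead $g$ is only local and cut off, you lose holomorphicity, and with it any guarantee that the new critical points are positive nodes. The missing idea is to deform the gluing maps themselves, e.g.\ $(\mathfrak{s}_1,\mathfrak{t}_1)=(t^{-1},t(st^2-\ep^2))$ and $(\xi_2,\eta_2)=(\eta_1^{-1},\xi_1\eta_1(\eta_1^{2n}-\ep^{2n}))$. Then $f(y^nf^{2n}-\ep^{2n})$ on $U$ patches to a holomorphic map on a deformed $(X_{n,1})_\ep$, and its critical points off the zero fiber all lie on $x=0$ in $U$.

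\emph{The relation check.} Checking the product cannot confirm an identification of vanishing cycles, since many factorizations have product $\sigma_{2n+1}^{\pm1}$. In the paper, Ishizaka's words $\sigma_{2n+1}^{2n}=(t_1\cdots t_{6n+2})^6$ and $\sigma_{2n+1}=(t_1\cdots t_{6n+2})^{12n}$ are rewritten by braid and chain relations; this is where $c_{i,j},\delta,d,d',d_i,d_i'$ are defined. These words are used only to pin down the global conjugation ($t_d^{-1}$, resp.\ $t_d^{-1}t_{d'}^{-1}$) of cycles already computed geometrically. That geometric computation is not optional. The relation $W=1$ by itself yields some Lefschetz fibration over $S^2$. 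The identification of its total space with $(S^2\times T^2)\#(4n+5)\CPbar^2$ comes from knowing it is the perturbation of $X_n$.

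Two smaller points. Both local factorizations \eqref{eq:genfact_1} and \eqref{eq:genfact_2} contain $\prod_i t_{d_i}t_{d'_i}$, which is why the square appears in the global word; so the $d_i,d_i'$ are not confined to the second fiber as you predict. Your Euler characteristic check should read $-12n+(16n+5)=4n+5$, since the word has $16n+5$ letters; your ``$8+12n+1$'' gives $9$, which equals $4n+5$ only when $n=1$.
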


The vanishing cycles can be simply drawn in $\C$, using the hyperelliptic quotient $q: \Sigma_{3n+1,1} \arrowr D^2\cong \C$.
\begin{figure}[h]
\centering
\includegraphics[scale=0.8]{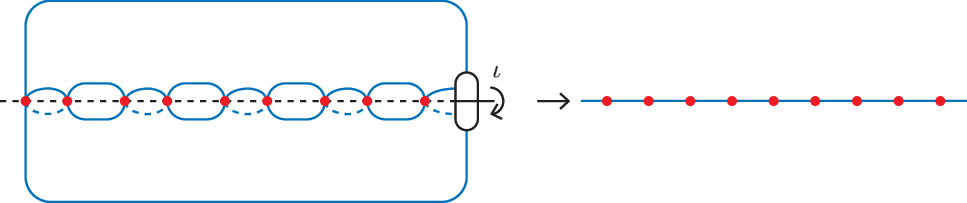}
\caption{Hyperelliptic quotient map. Blue curves map to the straight line under the map $\iota$.}
\label{fig:hyperelliptic}
\end{figure}

Note that under the quotient $q$, the curves $d_i, d'_i$ are mapped to a single curve $\tilde{d}_i$, and $d,d'$ to a single curve $\tilde{d}$.
Moreover, since $c_{i,j}$ is hyperelliptic curve, $q$ maps from $c_{i,j}$ to a curve $t^{-1}_{\tilde{d}}(\tilde{c}_{i,j})\subset \C$.

\begin{figure}[h]
\centering
\includegraphics{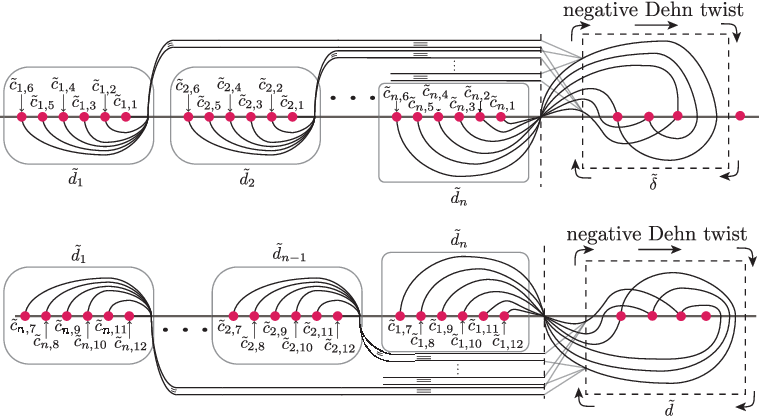}
\caption{Vanishing cycles $\tilde{c}_{i,j}$, $\tilde{d}_i$, $\tilde{\delta}$ and $\tilde{d}$ after the quotient by $\iota$.}
\label{fig:vc}
\end{figure}


The contents of this paper are as follows.
In Section~\ref{sec:back}, we provide some basic properties of Lefschetz fibrations, periodic maps, cyclic quotient singularities of complex surfaces, and their resolutions,
called the \textit{Hirzebruch–Jung resolution} (or simply \textit{HJ-resolution}).

For a periodic action $\sigma$ on $\Sigma_g$ of order $n$ and the $2\pi/n$-rotation action $\tau$ on $S^2$ with two fixed points,
we have the quotient space $\Sigma_g \times S^2/ \sigma\times \tau$.
It has cyclic quotient singularities; thus, by taking the HJ-resolution, we obtain the map $\phi: X \arrowr S^2$.
It is shown that the total space $X$ is diffeomorphic to a blow-up of the product space $\Sigma_h \times S^2$,
where $\Sigma_h\cong \Sigma_g/\sigma$.

In the map $\phi$, there are two singular fibers resulting from the two fixed points on $S^2$.
From a local perspective, we have the map $\phi_i:X_i \arrowr D^2$, with one singular fiber.
In Sections~\ref{sec:splitgen1} and~\ref{sec:splitgen2}, we construct a local chart for each $X_i$ and define $\phi_i$ on each chart.
We also describe a perturbation of each $\phi_i$ so that it becomes a Lefschetz fibration.
Similar to \cite{M3}, all critical points lie in one chart; therefore, by analyzing the movement in this main chart around the singular fibers,
we can find the corresponding vanishing cycles.

\section{Backgrounds}\label{sec:back}

\subsection{Periodic actions on Riemann surfaces} \label{sec:period}
Let $\Sigma_g$ be a closed oriented surface of genus $g$.
A simple example of an element in the mapping class group is a \textit{periodic map}; that is, an orientation-preserving map
$\sigma:\Sigma_g \rightarrow \Sigma_g$ such that $\sigma^n=\text{id}$ for some positive integer $n$.
Such an integer $n$ is called the \textit{period} of $\sigma$.
Two periodic maps $\sigma$ and $\sigma'$ on $\Sigma_g$ are called \textit{conjugate} 
if there exists an orientation-preserving map $\psi$ on $\Sigma_g$ such that $\sigma'=\psi \circ \sigma \circ \psi^{-1}$.
In this section, we review the classification of periodic maps on $\Sigma_g$ up to conjugation, 
as described by Nielsen~\cite{Ni}.

A point $p \in \Sigma_g$ is called a \textit{multiple point} if there exists an integer $m$ with $0<m<n$ such that 
$m$ is the smallest positive integer satisfying $\sigma^m(p)=p$.
Note that the set of multiple points is discrete; hence, the set $M_\sigma$ of multiple points of $\sigma$ is finite.
Since the restriction \[\sigma: \Sigma_g-M_\sigma \rightarrow \Sigma_g -M_\sigma\] is a free action, the quotient map 
\[\pi_\sigma: \Sigma_g \rightarrow \Sigma_g/\sigma\] induces a covering map
\[\pi_\sigma: \Sigma_g -M_\sigma \rightarrow \Sigma_g/\sigma - B_\sigma,\] where $B_\sigma=\pi_\sigma(M_\sigma)$.
Each point in $B_\sigma$ is called the \textit{branch point} of $\pi_\sigma$.
If we fix a base point $x\in \Sigma_g/\sigma -B_\sigma$ and a lift $\tilde{x}\in \pi_\sigma^{-1}(x)$, then $\pi_\sigma$ induces a homomorphism
\[\omega_\sigma: \pi_1(\Sigma_g/\sigma -B_\sigma,x) \arrowr \Z_n\] defined as follows:
for any class $\left[\alpha\right]\in \pi_1(\Sigma_g/\sigma -B_\sigma,x)$, let $\tilde{\alpha}$ be the lift of $\alpha$ via $\pi_\sigma$ starting at $\tilde{x}$.
Then the endpoint $\tilde{\alpha}(1)\in \pi_\sigma^{-1}(x)$ satisfies $\tilde{\alpha}(1)=\sigma^k(\tilde{x})$ for a unique integer $k$ ($0\le k \le n-1$). 
We define $\omega_\sigma(\left[\alpha\right]):=k \pmod n$.
Since $\omega_\sigma: \pi_1(\Sigma_g/\sigma -B_\sigma,x) \arrowr \Z_n$ is a homomorphism into the abelian group $\Z_n$, it induces a homomorphism
\[\omega_\sigma: H_1(\Sigma_g/\sigma -B_\sigma) \arrowr \Z_n.\]

For each point $q_i$ in $B_\sigma:=\{q_1,\dots,q_b\}$, where $b=| B_\sigma |$, 
we assign a loop $\alpha_i$ bounding a disk $D_i$ in $\Sigma_g/\sigma$ such that $D_i \cap B_\sigma=\{q_i\}$, oriented in the counter-clockwise direction.
The following proposition states that $n,b, \{\omega_\sigma(\alpha_1),\dots,\omega_\sigma(\alpha_b)\}$ are essential data for classifying $\sigma$.

\begin{prop}[\cite{Ni}]
Two periodic maps $\sigma$ and $\sigma'$ on $\Sigma_g$ are conjugate if and only if the following conditions are satisfied:
\begin{enumerate}[(i)]
\item The period of $\sigma$ and $\sigma'$ are equal,
\item $| B_\sigma |= | B_{\sigma'} |=b$, and
\item After reindexing the elements of $B_{\sigma'}$, 
$\omega_\sigma(\alpha_i)=\omega_{\sigma'}(\alpha_i)$ for each $1\le i \le b$.
\end{enumerate}
\end{prop}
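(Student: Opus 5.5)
The plan is to reduce conjugacy of periodic maps to an equivalence of branched covering data, and then to reconstruct the action from that data. The ``only if'' direction is direct. Suppose $\sigma'=\psi\circ\sigma\circ\psi^{-1}$. Then $\psi$ maps $\sigma$-orbits to $\sigma'$-orbits, so the periods agree. It sends $M_\sigma$ onto $M_{\sigma'}$ and therefore descends to an orientation-preserving homeomorphism $\bar\psi:\Sigma_g/\sigma\arrowr\Sigma_g/\sigma'$ with $\bar\psi(B_\sigma)=B_{\sigma'}$, so $b$ agrees. For the monodromies, take the base point $x'=\bar\psi(x)$ with lift $\psi(\tilde x)$. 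If $\tilde\alpha$ is the lift of $\alpha$ ending at $\sigma^k(\tilde x)$, then $\psi\circ\tilde\alpha$ is the lift of $\bar\psi\circ\alpha$ and ends at $\psi\sigma^k(\tilde x)=\sigma'^k(\psi(\tilde x))$. Hence $\omega_{\sigma'}(\bar\psi_*\alpha)=\omega_\sigma(\alpha)$. Because $\bar\psi$ preserves orientation, it carries each small counterclockwise loop $\alpha_i$ to a loop freely homotopic to the counterclockwise loop around $\bar\psi(q_i)$. Its class in $H_1$ is therefore the same, and reindexing gives (iii).

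For the ``if'' direction I would proceed in three steps.

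\textbf{Step 1.} Riemann--Hurwitz determines the genus $h$ of $\Sigma_g/\sigma$:
\[2-2g=n(2-2h)-\sum_i\bigl(n-n/m_i\bigr),\]
where $m_i$ is the order of $\omega_\sigma(\alpha_i)$ in $\Z_n$. The $m_i$ are fixed by the data in (iii), so both quotients have the same genus $h$ and the same number $b$ of branch points.

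\textbf{Step 2.} Choose an orientation-preserving homeomorphism $f:\Sigma_g/\sigma\arrowr\Sigma_g/\sigma'$ sending $q_i\mapsto q'_i$ in the reindexed order. Such an $f$ exists by the classification of surfaces and the transitivity of homeomorphisms on ordered finite sets. Then $\omega_{\sigma'}\circ f_*$ and $\omega_\sigma$ are two epimorphisms
\[H_1(\Sigma_g/\sigma-B_\sigma)\arrowr\Z_n\]
that agree on every $\alpha_i$. Surjectivity holds because $\Sigma_g$ is connected.

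\textbf{Step 3.} This is the main obstacle. I would modify $f$ by a homeomorphism fixing $B_\sigma$ pointwise so that the two epimorphisms agree on all of $H_1$. Take a standard basis $a_1,b_1,\dots,a_h,b_h,\alpha_1,\dots,\alpha_{b}$, subject to the single relation that the $\alpha_i$ sum to $0$. The two homomorphisms may differ on the $a_j,b_j$. The mapping class group of the punctured surface fixing each puncture acts on the values $\omega(a_j),\omega(b_j)\in\Z_n$ through $Sp(2h,\Z)$. Dehn twists along curves enclosing a single puncture together with a handle shift a value by $\omega(\alpha_i)$. I expect a Nielsen-type reduction, a Euclidean algorithm using transvections, to bring both epimorphisms to a common normal form. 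That normal form depends only on $n$, $h$ and the multiset of $\omega(\alpha_i)$, given surjectivity. This is the step I would work out carefully, first doing the case $h=0$, which is trivial, and then induction on $h$.

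Once the monodromies agree, covering space theory finishes the argument. The two unbranched $n$-fold regular covers of $\Sigma_g/\sigma-B_\sigma$ correspond to the same kernel, so $f$ lifts to a homeomorphism $\psi$. We can arrange $\psi(\tilde x)=\tilde x'$, and equality of the monodromy maps gives $\psi\sigma=\sigma'\psi$ on the free parts, since deck transformations are matched with the same $k$. Finally $\psi$ extends continuously over the finite sets $M_\sigma$ of multiple points, being a local homeomorphism near the punctures. Hence $\sigma'=\psi\sigma\psi^{-1}$.
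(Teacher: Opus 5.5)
The paper gives no proof of this proposition; it quotes Nielsen~\cite{Ni}. Your proposal therefore has to stand on its own, and its framework is the standard one and is sound. The ``only if'' direction, the Riemann--Hurwitz count, the choice of $f$ with $q_i\mapsto q'_i$, and the final lifting step all work. In that last step the kernels agree, $\psi\sigma\psi^{-1}=\sigma'$ because both deck transformations send $\tilde x'$ to the same point, and $\psi$ extends over the multiple points. The genuine gap is Step~3. It carries the whole content of the theorem, and you only announce it.

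Moreover, the move you name for Step~3 does not do what you need. A curve bounding a subsurface that contains one handle and the puncture $q_i$ is separating. It therefore has algebraic intersection $0$ with every closed curve in $\Sigma_h-B_\sigma$, and its Dehn twist acts trivially on $H_1(\Sigma_h-B_\sigma)$. A nonseparating curve in the class $a_j+\alpha_i$ also fails: its twist changes $\omega(b_j)$ by $\pm(\omega(a_j)+\omega(\alpha_i))$, not by $\omega(\alpha_i)$. Relatedly, the pure mapping class group does not act on $H_1(\Sigma_h-B_\sigma)$ through $\mathrm{Sp}(2h,\Z)$. The part of the action invisible to $\mathrm{Sp}(2h,\Z)$ is exactly what Step~3 requires.

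Here is one way to close the gap.
\begin{itemize}
\item Put $A=\langle\omega(\alpha_1),\dots,\omega(\alpha_b)\rangle=d\Z_n$. This subgroup is the same for $\omega_\sigma$ and for $\omega_{\sigma'}\circ f_*$.
\item Let $\gamma$ be a simple loop through $q_i$, avoiding the other branch points, that represents $b_j$. Let $\gamma_1,\gamma_2$ be the boundary curves of an annular neighbourhood of $\gamma$. The point-pushing map $T_{\gamma_1}T_{\gamma_2}^{-1}$ fixes every branch point.
\item Since $[\gamma_1]-[\gamma_2]=\pm\alpha_i$, and $\alpha_i$ pairs trivially with every closed curve, this map acts on $H_1$ by $x\mapsto x\pm\langle\gamma,x\rangle\alpha_i$. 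It shifts the value on $a_j$ by $\pm\omega(\alpha_i)$ and fixes every other basis value. The same construction with $a_j$ in place of $b_j$ shifts the value on $b_j$.
\item Hence each value on the $a_j,b_j$ can be shifted independently by any element of $A$. It suffices to match the reduced vectors in $(\Z_n/A)^{2h}\cong\Z_d^{2h}$, and surjectivity of $\omega$ says these reduced vectors are unimodular.
\item For $h\ge 1$, a unimodular vector of $\Z_d^{2h}$ lifts to a primitive vector of $\Z^{2h}$, and $\mathrm{Sp}(2h,\Z)$ is transitive on primitive vectors.
\item Realize the needed symplectic matrix by a homeomorphism supported away from a disk containing $B_\sigma$. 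This brings both reduced vectors to $(1,0,\dots,0)$.
\item The push maps then give the common normal form $\omega(a_1)=1$, with all other values on the $a_j,b_j$ equal to $0$. This depends only on $n$, $h$ and the $\omega(\alpha_i)$.
\end{itemize}
With this lemma supplied, your remaining steps give a complete proof.
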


Let $\theta_i=\omega_\sigma(\alpha_i)$. 
The ordered set $(\theta_1,\dots,\theta_b)$ is called the \textit{valency} of $\sigma$.
By the above Proposition, the data $\left[g,n;\theta_1,\dots,\theta_b\right]$ determines a periodic map up to conjugation.
Conversely, we can construct a periodic map from the data $\left[g,n;\theta_1,\dots,\theta_b\right]$ 
using the existence theorem by Hurwitz~\cite{Hu}.

\begin{prop} \label{prop:condition}
There exists a periodic map $\sigma$ on $\Sigma_g$ with period $n$ and valency $(\theta_1,\dots,\theta_b)$ 
if and only if the following conditions are satisfied:
\begin{enumerate}[(i)]
\item $\theta_1+\cdots+\theta_b \equiv 0 \mod n$,
\item Let $n_i=\gcd\{n,\theta_i\}$. Then there exists a non-negative integer $h$ such that
\begin{equation} \label{eq:genus}
2g-2=n(2h-2)+\sum_{i=1}^b (n-n_i),
\end{equation}

\item If $h=0$ in \eqref{eq:genus}, then $\gcd\{\theta_1,\dots,\theta_b\} \equiv 1 \pmod n$.
\end{enumerate}
\end{prop}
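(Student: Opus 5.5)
The plan is to reduce both directions to the existence of a surjective homomorphism $\omega:\pi_1(\Sigma_h - B)\to \Z_n$ with prescribed values on the boundary loops, via the Riemann existence theorem (or plain covering space theory). Here $B=\{q_1,\dots,q_b\}\subset \Sigma_h$. The group $\pi_1(\Sigma_h-B)$ has the standard presentation with generators $a_1,b_1,\dots,a_h,b_h,\alpha_1,\dots,\alpha_b$ and the single relation
\[
\prod_{k=1}^h [a_k,b_k]\,\alpha_1\cdots\alpha_b = 1 .
\]
A periodic map of period $n$ with branch set $B$ corresponds to a connected regular $\Z_n$-cover of $\Sigma_h - B$. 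Filling in the punctures gives a closed surface on which the deck generator acts. The valency is then the tuple $\theta_i=\omega(\alpha_i)$.

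\textbf{Necessity.} Start from $\sigma$ and its $\omega_\sigma$.
\begin{enumerate}[(a)]
\item Since $\Z_n$ is abelian, the commutators die, so the relation gives $\sum\theta_i\equiv 0 \pmod n$. This is (i).
\item Over $q_i$, the local monodromy $\alpha_i\mapsto\theta_i$ generates a subgroup of order $n/n_i$. So there are $n_i$ preimage points, each with ramification index $n/n_i$.
\item The Riemann--Hurwitz formula then gives $2-2g=n(2-2h)-\sum_i (n-n_i)$, which is (ii) after rearranging.
\item Connectedness of $\Sigma_g$ forces $\omega_\sigma$ to be surjective. When $h=0$ the group is generated by the $\alpha_i$ alone, so the $\theta_i$ generate $\Z_n$. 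This is what condition (iii) encodes, read as $\gcd(\theta_1,\dots,\theta_b,n)=1$.
\end{enumerate}

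\textbf{Sufficiency.} Given data satisfying (i)--(iii), take $h$ from (ii) and define $\omega$ on generators as follows:
\begin{itemize}
\item $\omega(\alpha_i)=\theta_i$;
\item $\omega(a_1)=1$ if $h\ge1$;
\item all other $a_k,b_k$ map to $0$.
\end{itemize}
Condition (i) ensures the single relation is respected, so $\omega$ is a well-defined homomorphism. It is surjective: when $h\ge1$ because of $a_1$, and when $h=0$ because of (iii).

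The kernel of $\omega$ gives a connected regular $n$-fold cover of $\Sigma_h-B$ with deck group $\Z_n$. Near each puncture, this cover restricts to $n_i$ copies of the standard punctured-disk cover $z\mapsto z^{n/n_i}$. Filling these in gives a closed orientable surface $S$. The deck generator extends to an orientation-preserving periodic map $\sigma$ with valency $(\theta_1,\dots,\theta_b)$. By Riemann--Hurwitz and (ii), the genus of $S$ is $g$. One also needs each $q_i$ to be a genuine branch point; we assume, as is implicit in the notation, that $\theta_i\not\equiv 0$.

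\textbf{Main obstacle.} The delicate step is the local analysis at the punctures. We must check that the filled-in surface is a manifold and that the extended map really has $\omega_\sigma(\alpha_i)=\theta_i$ with our orientation and base-point conventions, rather than $-\theta_i$ or a conjugate. I would handle this by writing the restricted cover over a small punctured disk explicitly as $\Z_n\times_{\langle\theta_i\rangle}$ (punctured disk). Then I would compute the lift of $\alpha_i$ directly. The remaining steps are formal consequences of the presentation of $\pi_1$ and of Riemann--Hurwitz.
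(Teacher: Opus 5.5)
Your proposal is correct and follows the paper's reasoning. The paper gets (i), (ii) and (iii) from the same three facts you use: $\omega_\sigma$ is a homomorphism and $\alpha_1+\cdots+\alpha_b$ is null-homologous; the Riemann--Hurwitz formula; and surjectivity of $\omega_\sigma$. For existence the paper only cites Hurwitz's theorem, and the covering-space construction from a surjection $\pi_1(\Sigma_h - B)\to\Z_n$ that you spell out is the standard proof of that theorem.

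Reading (iii) as $\gcd(\theta_1,\dots,\theta_b,n)=1$ is the right choice. The literal ``$\gcd\{\theta_i\}\equiv 1 \pmod n$'' would wrongly exclude $\sigma^3$ for the order-$5$ map of valency $(1,2,2)$ on $\Sigma_2$, which has valency $(2,4,4)$.

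The sign issue you flag does not actually arise. Take $\sigma$ to be the deck transformation corresponding to $1$ under $\pi_1/\ker\omega\cong\Z_n$; then $\omega_\sigma=\omega$ on all of $\pi_1$.
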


Note that the condition (i) follows from the fact that $\omega_\sigma$ is a homomorphism and the sum $\alpha_1+\cdots+\alpha_b$ is null-homologous. 
Condition (ii) follows from the Riemann-Hurwitz formula, which implies $\Sigma_g/\sigma \cong \Sigma_h$.
Condition (iii) follows from the surjectivity of $\omega_\sigma$. 
Another remark is that $n_i$ represents the number of distinct points in the fiber $\pi_\sigma^{-1}(q_i)$. 
Thus, if $n_i=1$, then $\pi_\sigma^{-1}(q_i)$ is a single fixed point of $\sigma$.

In the following, we use the addition notation introduced by~\cite{AI}: 
\[\sigma=(n, \theta_1/n+\cdots+\theta_b/n)=(n,r_1/n_1+\cdots+r_b/n_b),\] 
where $r_i=\theta_i/\gcd\{\theta_i,n\}$ and $n_i=n/\gcd\{\theta_i,n\}$.
Note that the rational number $r_i/n_i$ implies that 
$\sigma^{n/n_i}$ acts by a rotation of angle $2\pi r_i/n_i$ locally at each point of $\pi_\sigma^{-1}(q_i)$.

\begin{exam}
Consider the periodic action $\sigma$ of type $(3,1/3+1/3+1/3)$ on $\Sigma_4$.
Such a map exists by the Hurwitz theorem, but it also can be geometrically realized:
consider the 16-gon presentation of $\Sigma_4$, and define the action $\sigma$ as follows.

\begin{figure}[h]
\centering
\includegraphics{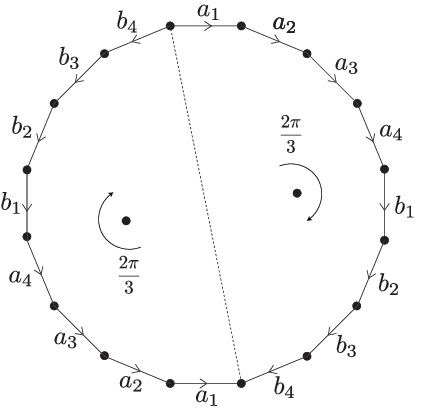}
\caption{Order 3 action on the genus 4 surface $\Sigma_4$}
\label{fig:sigma}
\end{figure}

Then we can easily see that there are three fixed points, and that locally around each fixed point, $\sigma$ acts as a $2\pi/3$-rotation.
\end{exam}

\subsection{Cyclic quotient singularities and Hirzebruch-Jung resolutions} \label{sec:cqsing}
In Section~\ref{sec:period}, we reviewed the properties of periodic maps.
In particular, some power of periodic map $\sigma$ acts as a rotation near a multiple point.
Although the quotient space $\Sigma_g/\sigma$ is also a Riemann surface, the 4-dimensional analogue may be a singular manifold.
That is, if $\sigma$ and $\sigma'$ are rotation actions on $\C$ fixing the origin, then 
the quotient space $\C^2/(\sigma\times \sigma')$ has a singularity at the origin, called a \textit{cyclic quotient singularity}.
In this section, we review such singularities and their resolutions.
The main reference is ~\cite{BHPV}, Section 3.5.

First, we define the $A_{n,q}$-singularity and construct its Hirzebruch-Jung resolution.
For $0<q<n$ with $\gcd(n,q)=1$, consider a subspace
\[W=\{(w,z_1,z_2)\in \C^3 \mid w^n=z_1 z_2^{n-q}\}.\]
Then $W$ can be regarded as an $n$-fold branched cover of $\C^2$ with branch set $z_1 z_2^{n-q}=0$ (counted with multiplicity),
and it has a singularity at $(0,0,0)$, called an $A_{n,q}$-\textit{singularity}.

The minimal resolution of an $A_{n,q}$-singularity is given by a Hirzebruch-Jung string.
If $n/q=\left[e_1,\dots,e_r\right]$ denotes the negative continued fraction expansion, 
let $U_i$ be a small neighborhood of the zero section $C_i$ in $\mathcal{O}_{\CP^1}(-e_i)$.
Consider the linear plumbing $X$ of all $U_i$, 
and take $C_0, C_{r+1}$ to be smooth curves which intersect $C_1, C_r$ transversely at one point, respectively.

Recall from Section 3.5 in~\cite{BHPV} that $\phi$ is a holomorphic function with divisor $(\phi)=\sum_{r=0}^{r+1}n_i C_i$ if and only if
$(\phi)\cdot C_k=0$ for each $k=1,\dots,r$. Hence we have the recursive relation

\begin{equation}\label{eq:rec}
n_{k-1}-e_k n_k +n_{k+1}=0.
\end{equation} 

Let $\mu_k$ and $\nu_k$ be the solutions of $n_k$ in~\eqref{eq:rec} with the initial data $\mu_0=0,\mu_1=1$, and $\nu_0=1,\nu_1=1$. 
Define two holomorphic functions $g$ and $h$ by 
\begin{align}\label{eq:holo}
(g)=\sum_{i=0}^{r+1}\mu_i C_i, \hskip 0.5em (h)=\sum_{i=0}^{r+1}\nu_i C_i
\end{align}

Also, define $f$ by
\[(f)=\sum_{i=0}^{r+1}\lambda_i C_i\]
with $\lambda_{0}=n, \lambda_1=q$, which implies $\lambda_{r+1}=0$.
Then, by the induction, we have the following relation between $\mu_k, \nu_k,$ and $\lambda_k$ for each $k=0,\dots, r+1$:

\begin{equation}
\lambda_k +(n-q)\mu_k=n\nu_k.
\end{equation}

This implies

\begin{equation}
(f\cdot g^{n-q})=(f)+(n-q)(g)=n(h)=(h^n),
\end{equation}

and hence $f\cdot g^{n-q}/h^n$ is a function in $\Gamma(\mathcal{O}_X^*)$, which can be absorbed into $f$.
Therefore, we have $h^n=f\cdot g^{n-q}$.
Define a map $X \arrowr W$ as $(w,z_1,z_2)=(h,f,g)$.

\begin{prop}[\cite{BHPV}]
The map $\rho:X\arrowr W$ defined above is the minimal resolution.
\end{prop}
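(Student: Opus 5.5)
The plan is to show separately that $\rho$ is a resolution, i.e.\ a proper birational morphism from a smooth surface that is an isomorphism off the exceptional set, and that it is minimal. Since $X$ is a plumbing of smooth neighborhoods $U_i$, smoothness is automatic, and minimality will come from the self-intersections. I would first check that $\rho$ is well defined and holomorphic: by construction $h^n=f\,g^{n-q}$ after absorbing the unit into $f$, so $\rho=(h,f,g)$ lands in $W$.

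Next I would identify $\rho^{-1}(0)$. By the recursion \eqref{eq:rec} with $e_k\ge 2$, the sequences $\mu_k$, $\nu_k$ and $\lambda_k$ are positive for $1\le k\le r$, so $g$, $h$ and $f$ all vanish on each $C_1,\dots,C_r$. Hence $\rho$ contracts $E=C_1\cup\dots\cup C_r$ to the origin. Off $E$, the divisors of $f$ and $g$ are $nC_0$ and $C_{r+1}$ (using $\lambda_{r+1}=0$ and $\mu_0=0$), so away from $E$ we have $f=0$ only along $C_0$ and $g=0$ only along $C_{r+1}$.

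To show $\rho$ is an isomorphism on $X\setminus E$ onto $W\setminus\{0\}$, I would work in the natural toric coordinates on the plumbing. Each $U_i\cup U_{i+1}$ is covered by charts $(x_i,y_i)$ in which $C_i=\{x_i=0\}$ and $C_{i+1}=\{y_i=0\}$. In these coordinates $f$, $g$ and $h$ are monomials $x_i^{a}y_i^{b}$ with exponents read off from $\lambda,\mu,\nu$ at indices $i,i+1$. In the end charts, $\rho$ becomes $(h,f,g)$ with $g$ and $h$ local coordinates near $C_0$, respectively $C_{r+1}$ minus $E$. One then checks directly that $\rho$ restricts to a biholomorphism onto the complement of the origin, where $W$ is smooth.

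This computation is the main obstacle. It needs the determinant identity $\mu_k\nu_{k+1}-\mu_{k+1}\nu_k=\pm1$, which follows by induction from \eqref{eq:rec}, and the relation $\lambda_k+(n-q)\mu_k=n\nu_k$. Together these show that the monomial map of each chart is invertible away from the coordinate axes and injective overall. I would alternatively compare with the toric description of $W$ as $\C^2/\Z_n$ with weights $(1,q)$; the cones of the fan are exactly given by the consecutive pairs $(\mu_k,\nu_k)$.

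Properness follows because $E$ is compact and $\rho$ is an isomorphism elsewhere, so $\rho$ is birational. For minimality, every exceptional curve is a $\CP^1$ with self-intersection $-e_i\le -2$. Hence no $(-1)$-curve lies in $E$, and by Castelnuovo's criterion the resolution is minimal.
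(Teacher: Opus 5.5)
The step that fails is your claim that $\rho$ restricts to a biholomorphism $X\setminus E\to W\setminus\{0\}$, ``where $W$ is smooth''. For $n-q\ge 2$, the hypersurface $W=\{w^n=z_1z_2^{n-q}\}$ is singular along the whole line $\{w=z_2=0\}$, because the partials $nw^{n-1}$, $z_2^{n-q}$ and $(n-q)z_1z_2^{n-q-1}$ all vanish there. So $W$ is not even normal; e.g.\ $w^3=z_1z_2^2$ in the $A_{3,1}$ case used later in the paper.

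This line is $\rho(C_{r+1})$, and your bookkeeping there is off. The invariant $\lambda_k\mu_{k+1}-\lambda_{k+1}\mu_k=n$, together with $\lambda_r=1$ and $\lambda_{r+1}=0$, gives $\mu_{r+1}=n$ and $\nu_{r+1}=n-q$. So off $E$ the divisor of $g$ is $nC_{r+1}$, not $C_{r+1}$, and $h$ vanishes to order $n-q$ along $C_{r+1}$. In the chart at $C_r\cap C_{r+1}$ your map is $(x,y)\mapsto(x^{\nu_r}y^{n-q},\,x,\,x^{\mu_r}y^{n})$. For fixed $x\neq 0$ this is the normalization $y\mapsto(y^{n-q},y^n)$ of a cusp. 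It is injective, as your determinant identity shows, but it is not an immersion. The inverse $y=(gx^{-\mu_r})^a(hx^{-\nu_r})^b$, with $an+b(n-q)=1$ and one of $a,b$ negative, is not a holomorphic function on $W$. So no chart computation can produce the biholomorphism onto $W\setminus\{0\}$ that you want. For the same reason, $W$ itself is not the toric variety $\C^2/\Z_n$; only its normalization is.

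As in BHPV, which the paper simply cites, the proposition has to be read for the normalization $\overline{W}\to W$, whose point over $0$ is the $A_{n,q}$-singularity. The missing idea is to pass to it. Since $X$ is normal and $\rho$ is bimeromorphic, $\rho$ lifts to $\bar\rho\colon X\to\overline{W}$. What must then be shown is that $\bar\rho$ is biholomorphic off $E=\bar\rho^{-1}(0)$.

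BHPV do this, and the paper reproduces the computation right after the proposition, as follows. Put $Z^*=Z\setminus\{z_1z_2=0\}$. Then $\gamma=(f,g)\colon X^*\to Z^*$ is an unramified $n$-sheeted covering with $\gamma_*\pi_1(X^*)=\langle(n,0),(q,1)\rangle$, the same subgroup as for the smooth part of $W$ over $Z^*$. The resulting isomorphism of coverings extends over the axes, because $X\setminus E$ and $\overline{W}\setminus\{0\}$ are both normal and finite over $Z\setminus\{0\}$.

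Your toric route can be repaired in the same spirit: identify $\overline{W}$, not $W$, with $\C^2/\Z_n$, and check that your monomial charts form its toric subdivision. Then the inverses such as $g^ah^b f^{-a\mu_r-b\nu_r}$ are holomorphic on $\overline{W}$. Your minimality argument (all $e_i\ge 2$, so $E$ contains no $(-1)$-curve) is fine once this is in place.
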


Let $\gamma=\pi\circ \rho: X \arrowr Z=\{(z_1,z_2)\in \C^2 \mid |z_1|<1, |z_2|<1\}$, where $\pi:W\arrowr Z$ is the projection map.
If we write \[X^*=X-\cup_{i=0}^{r+1}C_i, ~W^*= W-\{z_1z_2=0\},\]
then \[\gamma: X^*\arrowr W^*\] is the covering map of degree $n$.
Let $\delta_i\in \pi_1(X^*)$ be a small positively oriented loop around $C_i$,
and let $w_1=(1,0), w_2=(0,1)$ be the positively oriented loop around the $z_i$-axis in $\Z^2 \cong \pi_1(W^*)$.
Then $\delta_0,\delta_1$ map to $(\lambda_0,\mu_0)=(n,0), (\lambda_1,\mu_1)=(q,1)$, respectively.
Since $(n,0),(q,1)$ generates a subgroup of index $n$, it follows that $\gamma_* \pi_1 (X^*)<\Z^2$ is that subgroup.

We now return to the cyclic quotient singularity.
Consider the $\Z_n$-action on $\C^2$ defined by 
\begin{equation}
k\cdot (z_1,z_2)=(e^{\frac{2\pi i q_1 k}{n}}z_1, e^{\frac{2\pi i q_2 k}{n}}z_2) \hskip 1em (k\in\Z_n)
\end{equation}
where $0<q_i<n$.
If $\gcd\{n,q_1,q_2\}=c\neq 1$, then the action has order $n/c$, i.e., it is a $\Z_{n/c}$-action. 
Hence we may assume $\gcd\{n,q_1,q_2\}=1$.

Write $q_i/n=p_i/n_i$ with $\gcd\{p_i, n_i\}=1$, and let $m=\gcd\{n_1, n_2\}$.
Let $p'_i$ be an integer such that $p_i p'_i \equiv 1 \pmod m$, with $0<p'_i<m$,
and let $q$ be an integer such that $q\equiv p_1 p'_2 \pmod m$, with $0<q<m$.

\begin{prop}[\cite{BHPV}]
The image of $(0,0)\in \C^2$ in the quotient $\C^2/\Z_n$ is an $A_{m,q}$-singularity.
\end{prop}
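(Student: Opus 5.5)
The plan is to reduce the quotient $\C^2/\Z_n$ to a standard model. First I split off the parts of the group that act as pseudo-reflections. Then I identify what remains with the normalization of $W=\{w^m=z_1z_2^{m-q}\}$.

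\textbf{Step 1 (pseudo-reflections).} The element $k$ acts trivially on the $z_1$-coordinate exactly when $n_1\mid k$. The subgroup $H_2=\langle n_1\rangle\subset\Z_n$ therefore acts only on $z_2$, as a cyclic group of order $n/n_1$ rotating $z_2$. Likewise $H_1=\langle n_2\rangle$ acts only on $z_1$. The quotient by $H_1H_2$ is again $\C^2$, with coordinates $u_1=z_1^{a_1}$ and $u_2=z_2^{a_2}$, where $a_i=|H_i|$. Using $\gcd\{n,q_1,q_2\}=1$, I would check that $H_1\cap H_2$ is trivial and that $\Z_n/(H_1H_2)$ is cyclic of order $m=\gcd\{n_1,n_2\}$. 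This is bookkeeping with $n=\mathrm{lcm}(n_1,n_2)$, which holds because the action is faithful.

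Next I compute the induced action of a generator on $(u_1,u_2)$. Its weights are $e^{2\pi i p_1/m}$ on $u_1$ and $e^{2\pi i p_2/m}$ on $u_2$, up to units modulo $m$. Replacing the generator by its $p_1'$-th power normalizes the $u_1$ weight to $1$. The $u_2$ weight then becomes $p_2p_1'$. Since $\Z_m$ acts freely off the axes, this is a small group.

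\textbf{Step 2 (invariant ring).} For the action $\zeta\cdot(u_1,u_2)=(\zeta u_1,\zeta^{s}u_2)$ with $s\equiv p_2p_1'\pmod m$, the quotient is the normalization of $\{w^m=z_1z_2^{m-q'}\}$. Concretely, one takes $z_1=u_1^m$, $z_2=u_2^m$ and $w=u_1u_2^{t}$ with $t$ chosen so that $w$ is invariant, namely $1+st\equiv0$. Then $w^m=z_1z_2^{t}$ with $t\equiv -s^{-1}\pmod m$, so $m-t\equiv s^{-1}\equiv p_1p_2'\pmod m$. This recovers $q$, consistent with the convention that $W$ defines $A_{m,q}$. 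Alternatively, the covering description of $\gamma\colon X^*\to W^*$ given in the text identifies the image subgroup as the one generated by $(m,0)$ and $(q,1)$ in $\pi_1(W^*)$. I would then match this with the index-$m$ subgroup $\pi_1((\C^*)^2)\subset\pi_1((\C^*)^2/\Z_m)$.

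\textbf{Main obstacle.} The delicate step is the sign and inverse convention: checking that the resulting parameter is $p_1p_2'$ and not its inverse or negative, and that $A_{m,q}\cong A_{m,q'}$ when $qq'\equiv1$, since swapping $z_1$ and $z_2$ is allowed. I would settle this by comparing the lattices directly. The quotient lattice is $\Z^2+\Z\,(p_1,p_2)/m$. Its rescaling matches the lattice generated by $(m,0)$ and $(q,1)$ computed above, after multiplying by the unit $p_2'$.
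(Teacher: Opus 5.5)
Your proof is correct. It takes a genuinely different route from the paper's, although both pass through the same tower of quotients: since $a_i m=n_i$, your final coordinates $u_i^m$ are exactly the paper's coordinates $z_i=u_i^{n_i}$ on $\C^2/(\Z_{n_1}\times\Z_{n_2})$, and your $H_1H_2$ is an intermediate stage of the paper's map $\delta$.

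The paper stays entirely topological. It computes the subgroup $\Delta\subset\pi_1((\C^2)^*)=\Z^2$ belonging to $\delta$, generated by $(n_1,0)$, $(0,n_2)$ and $(p_1,p_2)$. It shows $(m,0)$ and $(q,1)$ lie in $\Delta$, then concludes by an index count against $\langle (m,0),(q,1)\rangle$, the subgroup that its earlier analysis of $\gamma\colon X^*\to W^*$ attaches to $A_{m,q}$. This is short and plugs directly into the HJ-resolution set-up. It tacitly uses two facts: a normal surface branched over the axes is determined by its $\pi_1$-subgroup, and $[\Z^2:\Delta]=m$, which requires $n=\mathrm{lcm}(n_1,n_2)$. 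The second is exactly the faithfulness bookkeeping you make explicit.

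You instead strip off the pseudo-reflections, reducing to a small group of type $\frac1m(1,p_2p_1')$. You then write down the invariants $u_1^m,u_2^m,u_1u_2^t$, which gives the explicit equation $w^m=z_1z_2^{t}$ with $m-t\equiv p_1p_2'=q$. This shows directly that the quotient is the \emph{normalization} of $W$. That distinction matters: for $m-q\ge 2$ the hypersurface $W$ is singular along the line $\{w=z_2=0\}$, hence not normal, and the paper's definition of $A_{m,q}$ leaves this implicit.

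The one step you should spell out is birationality. The stabilizer of $u_1u_2^t$ in $\Z_m^2$ has order $m$ and contains your $\Z_m$. Hence $\C(z_1,z_2,w)$ is the full invariant field, and the finite map from the normal quotient to $W$ is its normalization.

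Your worry about conventions does not arise, since your computation lands on $q$ itself rather than its inverse. Your closing lattice check, $m\Z^2+\Z(p_1,p_2)=\langle (m,0),(q,1)\rangle$ via the unit $p_2'$, is precisely the paper's argument.
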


\begin{proof}
Let $\gamma_0:\C^2 \arrowr \C^2/\Z_n$ be the quotient map by the $\Z_n$-action defined above.
Consider a $\Z_{n_i}$-action on $\C$ defined by the standard $2\pi i/n_i$-rotation fixing the origin.
Note that the quotient map \[\gamma:\C^2 \arrowr \C^2/(\Z_{n_1}\times \Z_{n_2})\]
can be identified with the covering $z_1=u_1^{n_1}, z_2=u_2^{n_2}$ of $\C^2$ branched along $\{z_1z_2=0\}$.

Now $\Z_n$ can be embedded in $\Z_{n_1}\times \Z_{n_2}$ as the subgroup generated by $(p_1, p_2)\in \Z_{n_1}\times \Z_{n_2}$. 
Hence, there is a factorization $\gamma=\delta \circ \gamma_0$, where
\[\delta: \C^2/\Z_n \arrowr \C^2/(\Z_{n_1}\times \Z_{n_2}).\]
Our goal is to find the subgroup \[\Delta\subset \Z\times \Z=\pi_1((\C^2)^*)\] corresponding to $\delta$, where $(\C^2)^*=\C^2-\{z_1z_2=0\}$.

Since the loops in $(\C^2)^*$ corresponding to $(n_1, 0), (0,n_2)$, and $(p_1, p_2)$ lift to loops under $\delta$, 
$\Delta$ is generated by $(n_1, 0), (0,n_2)$, and $(p_1, p_2)$.
Since $\gcd\{n_1,n_2p_1\}=m$, there exist integers $a$ and $b$ such that
\[a(n_1,0)+b(n_2p_1, n_2p_2)=(m,bn_2 p_2)=(m,0)+bp_2(0,n_2),\]
showing that $(m,0)\in \Delta$. 
Similarly, we have $(0,m)\in \Delta$.
Moreover, since $p'_2(p_1,p_2)\equiv (q,1) \mod (m,m)$ by assumption, this implies that $(q,1)\in \Delta$.
Conversely, the subgroup of $\Z\times \Z$ generated by $(m,0), (q,1)$ has index $m$, proving the claim.
\end{proof}

\begin{exam} \label{ex:Anq}
Let the $\Z_n$-action on $\C^2$ be given by 
\begin{equation}
k\cdot (z_1,z_2)=(e^{\frac{2\pi i p k}{n}}z_1, e^{\frac{2\pi i k}{n}}z_2) \hskip 1em (k\in\Z_n).
\end{equation}

This is the special case of the action described above, with $q_2=p_2=1$.
Then $\gcd\{n_1, n\}=n_1=m$, and since $p_2'=1$, we have $q=p$.
Thus, in this case, the quotient $\C^2/\Z_n$ has an $A_{m,p}$-singularity. 


Similarly, consider a $\Z_n$-action given by 
\begin{equation}
k\cdot (z_1,z_2)=(e^{\frac{2\pi i p k }{n}}z_1, e^{\frac{2\pi i (n-1) k}{n}}z_2) \hskip 1em (k\in\Z_n).
\end{equation}

Then $n-1\equiv m-1 \pmod m$, and $q \equiv p(m-1)\equiv m-p \pmod m$. 
So the quotient has an $A_{m,m-p}$-singularity.
\end{exam}

\subsection{Construction and Remarks on the total spaces}
Let $\sigma$ be a periodic action on $\Sigma_g$ with the data 
\[\left(n,\frac{\theta_1}{n}+\cdots+\frac{\theta_b}{n}\right)=\left(n, \frac{r_1}{n_1}+\cdots +\frac{r_b}{n_b}\right),\]
and let $h$ be the integer from~\eqref{eq:genus}, so that $\Sigma_g/\sigma \cong \Sigma_h$.
Note that $\sigma^{n/n_i}$ acts as a rotation by $2r_i \pi /n_i$ near each multiple point.
Let $q_i$ denote the branch point of \[\Sigma_g \arrowr \Sigma_g/\sigma\cong\Sigma_h\] corresponding to $r_i/n_i$.

Note that $\sigma^{-1}=\sigma^{n-1}$ is the periodic map with the data
\[\left(n,\frac{n-\theta_1}{n}+\cdots+\frac{n-\theta_b}{n}\right)=\left(n, \frac{n_1-r_1}{n_1}+\cdots +\frac{n_b-r_b}{n_b}\right),\]
and the quotient space $\Sigma_g/\sigma^{-1}$ is homeomorphic to $\Sigma_h$.
For our setting, we use the action under $\sigma^{-1}$ instead of $\sigma$.

Let $\tau$ be the rotation action by $2\pi/n$ on $D^2$ fixing the origin.
Consider the quotient space \[(\Sigma_g \times D^2)/(\sigma^{-1}\times \tau).\]
Then it is a singular space, and the singular points are $q_i\times 0$.
Moreover, it has a cyclic quotient singularity at each $q_i \times 0$, which is of type $A_{n_i, n_i-r_i}$ by Example~\ref{ex:Anq}.
Similarly, if $\tau'$ is the rotation action by $2\pi (n-1)/n$ on $D^2$ fixing the origin,
then the quotient space $(\Sigma_g \times D^2)/(\sigma^{-1}\times \tau')$ has $A_{n_i, r_i}$-singularity at $q_i \times 0$.

We define a projection map \[\pi_2: (\Sigma_g \times D^2)/(\sigma^{-1}\times \tau) \arrowr D^2/\tau \cong D^2,\] 
and take a HJ-resolution \[\rho: X \arrowr (\Sigma_g \times D^2)/(\sigma^{-1}\times \tau).\]  
Then the composition \[\phi=\pi_2 \circ \rho: X \arrowr D^2\] is a smooth fibration over $D^2$ such that 
\[\phi: X-\phi^{-1}(0) \arrowr D^2-0\] is a smooth fiber bundle with a fiber $\Sigma_g$, 
and $\phi^{-1}(0)$ is the singular fiber of $\phi$.
Then the monodromy of the smooth fibration $\phi: X \arrowr D^2$ is given by $\sigma$.

%

We can also construct the fibration over $S^2$ on a closed manifold.
Let $\tau$ be the $2\pi /n$-rotation action on $S^2$ with two fixed points $\{S,N\}$ ($S$ and $N$ denote the south and north poles of the sphere, respectively).
Then this action is of type $(n,1/n+(n-1)/n)$.
Similarly, consider the quotient space \[(\Sigma_g \times S^2)/(\sigma^{-1} \times \tau)\] and the projection map 
\[\pi_2: (\Sigma_g \times S^2)/(\sigma^{-1} \times \tau) \arrowr S^2/\tau \cong S^2.\]
It has $2b$ cyclic quotient singularities at $q_i\times N$ and $q_i\times S$ of type $A_{n_i, n_i-r_i}$ and $A_{n_i, r_i}$, respectively.
Take a HJ-resolution \[\rho: X \arrowr (\Sigma_g \times S^2)/(\sigma^{-1} \times \tau),\] 
and then we obtain a smooth fibration \[\phi:=\pi_2 \circ \rho: X \arrowr S^2.\]
Its generic fiber is homeomorphic to $\Sigma_g$ and $\phi$ has two singular fibers $\phi^{-1}(S)$ and $\phi^{-1}(N)$.

\begin{rmk}
In fact, this fibration $\phi: X \arrowr S^2$ is a special case of a standard isotrivial fibration, studied in complex geometry:
the desingularization $S$ of $T=(C_1\times C_2)/G$, where $C_i$ are smooth projective curves and $G$ acts on $C_1\times C_2$ diagonally such that $G$ acts faithfully on each component $C_i$.
See \cite{Se}. For a topological perspective, see \cite{P}. 
\end{rmk}

Now we compute the multiplicities and the self intersection numbers of each component in the singular fibers.

\begin{exam} \label{ex:sigma}
Consider $\sigma=(3,1/3+1/3+1/3)$ on $\Sigma_4$. 
Then $(\Sigma_4 \times S^2)/(\sigma^{-1}\times \tau)$ has $6$ cyclic quotient singularities, 
$3$ of which are of type $A_{3,2}$ and the other $3$ of which are of type $A_{3,1}$.

Remark from Section~\ref{sec:cqsing} that the resolution of a singularity of type $A_{3,1}$ consists of a single sphere of self-intersection $-3$, and the resolution of a singularity of type $A_{3,2}$ consists of two spheres of self-intersection $-2$.

Hence, the two singular fibers $\phi^{-1}(N)$ and $\phi^{-1}(S)$ of the fibration \[\phi:X \arrowr S^2 \] consist of
\begin{align*}
\phi^{-1}(N)&=3 T_1+ \sum_{i=1}^3 (d_{i,1} D_{i,1}+d_{i,2} D_{i,2}) \\
\phi^{-1}(S)&=3 T_2+ \sum_{i=1}^3 e_{i} E_{i}.
\end{align*}
Here, $T_i$ is the proper transform of $\pi_2^{-1}(N) , \pi_2^{-1}(S)$ under $\rho$, so that $T_i$ is homeomorphic to a torus.
$D_{i,j}$ and $E_i$ are the spheres coming from the HJ-resolution, so that $[D_{i,j}]^2=-2$ and $[E_i]^2=-3$.

To figure out the multiplicities $d_{i,j}$ and $e_i$, we evaluate each sphere $D_{i,j}$ and $E_i$ with $\phi^{-1}(N)$ and $\phi^{-1}(S)$, respectively.
Since \[0=\phi^{-1}(N)\cdot D_{i,2}=-2 d_{i,2}+ d_{i,1}\] and \[0=\phi^{-1}(N)\cdot D_{i,1}=d_{i,2}-2 d_{i,1}+3,\]
we obtain $d_{i,1}=2$ and $d_{i,2}=1$.
Using the same method, we obtain $e_{i}=1$.

Moreover, evaluating $T_1$ with $\phi^{-1}(N)$, we have
\[0=3[T_1]^2+d_{1,1}+d_{2,1}+d_{3,1}=3[T_1]^2+6,\]
so that $[T_1]^2=-2$. This integer is the negative of the sum $2/3+2/3+2/3$, which is the type of $\sigma^{-1}$.
Similarly, we can deduce $[T_2]^2=-1$.

Figure~\ref{fig:sigmafiber} shows the configuration of each singular fiber $\phi^{-1}(N)$ and $\phi^{-1}(S)$, whose monodromies are $\sigma$ and $\sigma^{-1}=\sigma^2$, respectively.

\begin{figure}[h]
\centering
\includegraphics{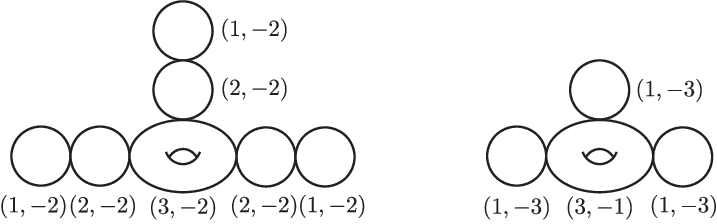}
\caption{Configuration of singular fibers whose monodromies are $\sigma$(left), $\sigma^{-1}$(right).}
\label{fig:sigmafiber}
\end{figure}
\end{exam}

\begin{notation}
Before describing the statement regarding singular fibers, we introduce some notation.
\begin{enumerate}
\item (Two fibrations) 
\begin{align*}
&\pi_1:(\Sigma_g \times S^2 )/(\sigma^{-1} \times \tau) \arrowr \Sigma_g/\sigma^{-1} \cong \Sigma_h, \\ 
&\pi_2: (\Sigma_g \times S^2)/(\sigma^{-1} \times \tau)\arrowr S^2/\tau\cong S^2. \\ 
&\phi=\pi_2 \circ \rho: X \arrowr S^2, \text{ and} \\ 
&\phi'= \pi_1 \circ \rho: X \arrowr \Sigma_h.
\end{align*}

\item (Negative continued fraction) 
\begin{align*}
&n_i/(n_i-r_i)=\left[a_{i,1},\dots, a_{i,k_i}\right], \text{ and} \\ 
&n_i/r_i=\left[b_{i,1},\dots, b_{i,k'_i}\right]
\end{align*}
are negative continued fractions of $n_i/(n_i-r_i)$ and $n_i/r_i$, where $a_{i,j}, b_{i,j}\ge 2$. 

\item (Singular fibers of $\phi$)
Let $F_1=(\pi_2)^{-1}(N)$, $F_2=(\pi_2)^{-1}(S)$, and let $\tilde{F}_i$ be the proper transform of $F_i$ under $\rho$. 
Also define $F'_1$ and $F'_2$ by
\begin{align}
&F_1'=\phi^{-1}(N)=n\tilde{F}_1+\sum_{i,j} d_{i,j} D_{i,j}, \\
&F_2'=\phi^{-1}(S)=n\tilde{F}_2+\sum_{i,j} e_{i,j} E_{i,j},
\end{align}
where $d_{i,j}, e_{i,j}$ are positive integers, and $D_{i,j}, E_{i,j}$ are spheres of self-intersection $-a_{i,j}$ and $-b_{i,j}$, respectively, 
such that $D_{i,1}$ and $E_{i,1}$ intersect with $\tilde{F}_1$ and $\tilde{F}_2$ transversely at a point, respectively.

\item (Singular fibers of $\phi'$)
Let $G_i=(\pi_1)^{-1}(q_i)$, and let $\tilde{G}_i$ be the proper transform of $G_i$ under $\rho$. 
Also let
\begin{equation} \label{eq:S^2_fibers}
G_i'=(\phi')^{-1}(q_i)={n_i}\tilde{G}_i+\sum_{j=1}^{k_i} d'_{i,j} D_{i,j}+\sum_{j=1}^{k_i'} e'_{i,j} E_{i,j},
\end{equation}
so that $D_{i,k_i}$ and $E_{i,k'_i}$ intersect with $G_i$ for each $i=1,\dots,b$.
\end{enumerate}
\end{notation}

The following theorems generalize the process of the calculation described in Example~\ref{ex:sigma}.

\begin{prop}\label{prop:square1}
\begin{enumerate}[(i)]
\item $d_{i,j}$ is the solution of the system of linear equations
\begin{align}\label{eq:mult1}
\begin{bmatrix}
a_{i,1} & -1 & 0 & 0  & \cdots \\ 
-1 & a_{i,2} & -1 & 0 & \cdots \\ 
0 &-1 & a_{i,3} & -1 & \cdots \\ 
\vdots & \vdots &\vdots & \ddots  & \vdots \\ 
0 & \cdots & 0 & -1 & a_{i, k_{i}} \end{bmatrix}
\begin{bmatrix}
 d_{i,1} \\ d_{i,2} \\ d_{i,3} \\ \vdots \\ d_{i,k_i}
\end{bmatrix}
=\begin{bmatrix}
n \\ 0 \\ 0 \\ \vdots \\ 0
\end{bmatrix}
\end{align}
for each $i=1,\dots,b$. 
For $e_{i,j}$, it is the solution of the system of linear equations described in \eqref{eq:mult1}, replacing $a_{i,j}$ by $b_{i,j}$ and $d_{i,j}$ by $e_{i,j}$.

\item $d'_{i,j}$ is the solution of the system of linear equations
\begin{align}\label{eq:mult2}
\begin{bmatrix}
a_{i,1} & -1 & 0 & 0  & \cdots \\ 
-1 & a_{i,2} & -1 & 0 & \cdots \\ 
0 &-1 & a_{i,3} & -1 & \cdots \\ 
\vdots & \vdots &\vdots & \ddots  & \\ 
0 & \cdots & 0 & -1 & a_{i, k_{i}} \end{bmatrix}
\begin{bmatrix}
 d'_{i,1} \\ d'_{i,2} \\ d'_{i,3} \\ \vdots \\ d'_{i,k_i}
\end{bmatrix}
=\begin{bmatrix}
0 \\ 0 \\ 0 \\ \vdots \\ n_i
\end{bmatrix}
\end{align} 
for each $i$. 
For $e'_{i,j}$, it is the solution of the system of linear equations described in \eqref{eq:mult2}, replacing $a_{i,j}$ by $b_{i,j}$ and $d'_{i,j}$ by $e'_{i,j}$.
\end{enumerate}
\end{prop}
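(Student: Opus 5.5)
The plan is to compute each multiplicity from the vanishing of the intersection number of a fiber with each of its components, exactly as in Example~\ref{ex:sigma}. A fiber class $F$ of $\phi$ (or of $\phi'$) is homologous to any other fiber. Since a fiber can be moved off any compact curve contained in a different fiber, we get $F\cdot C=0$ for every irreducible component $C$ of a fiber. So for (i) we intersect $F_1'=n\tilde F_1+\sum_{i,j}d_{i,j}D_{i,j}$ with $D_{i,j}$ for fixed $i$ and $j=1,\dots,k_i$.

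For this we need the intersection pattern near each singular point $q_i\times N$. The HJ-resolution of the $A_{n_i,n_i-r_i}$-singularity replaces the point by a linear chain $D_{i,1},\dots,D_{i,k_i}$ with $D_{i,j}^2=-a_{i,j}$ and $D_{i,j}\cdot D_{i,j+1}=1$, and all other products between these spheres are $0$. By the Notation, $\tilde F_1$ meets only the first sphere $D_{i,1}$, transversally in one point. Moreover, chains over different $i$ are disjoint, since they lie over distinct points $q_i$.

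Identifying which end of the chain meets which branch is the main point needing justification. In the local model of Section~\ref{sec:cqsing}, the two branches of the branch locus $\{z_1z_2=0\}$ are the proper transforms $C_0$ and $C_{r+1}$, attached to $C_1$ and $C_r$. Here $\pi_2^{-1}(N)$ and $\pi_1^{-1}(q_i)$ are the images of the two coordinate axes through $q_i\times N$. Which end of the chain each one meets is the convention fixed in the Notation, with $\tilde F_1$ at $D_{i,1}$ and $\tilde G_i$ at $D_{i,k_i}$. This is consistent with ordering the continued fraction $n_i/(n_i-r_i)$ in the direction starting from $C_0$, the curve with $\lambda_0=n$.

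With this pattern, $F_1'\cdot D_{i,1}=0$ gives
\[
n-a_{i,1}d_{i,1}+d_{i,2}=0 .
\]
For $1<j\le k_i$, $F_1'\cdot D_{i,j}=0$ gives
\[
d_{i,j-1}-a_{i,j}d_{i,j}+d_{i,j+1}=0,
\]
with the convention $d_{i,k_i+1}=0$. Multiplying by $-1$ turns these into the system \eqref{eq:mult1}. The tridiagonal matrix is the negative of the intersection form of the chain, so it is negative definite up to sign and hence invertible, and the solution is unique. The argument for $e_{i,j}$ at $q_i\times S$ is identical, using the chain $E_{i,1},\dots,E_{i,k_i'}$ with self-intersections $-b_{i,j}$ from the $A_{n_i,r_i}$-singularity.

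For (ii) we use the fibration $\phi'$ instead. Near $q_i\times N$, the fiber $G_i'=(\phi')^{-1}(q_i)$ contains $n_i\tilde G_i$. The multiplicity of $\tilde G_i$ is $n_i$ because the generic point of $G_i$ has stabilizer of order $n_i$, so $\pi_1$ is ramified of order $n_i$ there. In the local model this matches $\lambda_{0}=n$ with $n$ replaced by $n_i$ at the other end of the chain, namely the $z_2$-direction. Now $\tilde G_i$ meets $D_{i,k_i}$ once and no other $D_{i,j}$, and $\tilde F_1$ is not a component of $G_i'$. So $G_i'\cdot D_{i,j}=0$ gives the same tridiagonal equations, now with inhomogeneous term $n_i$ in the last row, which is \eqref{eq:mult2}. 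The $E$-chain at $q_i\times S$ is handled in the same way, giving the equations for $e'_{i,j}$.

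The one step requiring genuine care is the multiplicity of $\tilde F_1$ and $\tilde G_i$ together with the endpoint incidences. Everything else is linear algebra. The multiplicity $n$ of $\tilde F_1$ holds because $\tau$ fixes $N$ with order $n$ while $\sigma^{-1}$ acts freely on a generic point of $\Sigma_g$. As a cross-check, the local formula $(f)=\sum\lambda_iC_i$ with $\lambda_0=n_i$, $\lambda_1=q$, $\lambda_{r+1}=0$ from Section~\ref{sec:cqsing} gives exactly these multiplicities in the local $A_{n_i,\cdot}$ model. In the global fibration, $\tilde F_1$ carries multiplicity $n$ rather than $n_i$ because there are $n/n_i$ local sheets, which is why the right-hand side of \eqref{eq:mult1} is $n$.
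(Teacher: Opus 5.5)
Your proposal is correct and follows the paper's proof. Like the paper, you set $F_1'\cdot D_{i,j}=0$ (respectively $G_i'\cdot D_{i,j}=0$), using that $\tilde F_1$ meets only $D_{i,1}$ and $\tilde G_i$ meets only $D_{i,k_i}$, which yields the tridiagonal system; your additional justifications (why $F\cdot C=0$, uniqueness via definiteness, and taking the chain orientation from the Notation) go beyond what the paper writes, and one wording slip needs fixing: a generic point of $G_i$ has trivial stabilizer under $\sigma^{-1}\times\tau$, and it is the point of $\pi_\sigma^{-1}(q_i)$, whose stabilizer in $\langle\sigma\rangle$ has order $n_i$, that makes $\pi_1$ vanish to order $n_i$ along $G_i$.
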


\begin{proof}
Fix $i$. Evaluating the intersection of $F'_1$ with $D_{i,j}$ ($j=1,\dots, k_i$) yields the system of $k_i$ linear equations
\begin{equation} \label{eq:recursion}
\begin{cases}
n-a_{i,1}d_{i,1}+d_{i,2}=0&  \\
d_{i,j}-a_{i,j+1}d_{i,j+1}+d_{i,j+2}=0 & (j=1,\dots,k_i-2) \\
d_{i,k_i -1}-a_{i,k_i} d_{i,k_i}=0. &
\end{cases}
\end{equation}
The first equation comes from the fact that $\tilde{F}$ intersects the sphere $D_{i,1}$ once transversely.
The other systems of equations are just analogous to this process.
\end{proof}

Note that $d_{i,j}, d'_{i,j}, e_{i,j}, e'_{i,j}$ need not be integers when we only consider the system of linear equations.
However, Proposition~\ref{prop:integers} says they are integers, and hence they are actually multiplicities of each component.


\begin{lem} \label{lem:recursive}
Let $a_1,\dots , a_k$ be positive integers.
If a sequence $\{P_n\}$ and $\{Q_n\}$ satisfy the recursive formula
\begin{align} 
P_0&=1, P_1=a_1, \text{ and } P_{j+1}=a_{j+1}P_j-P_{j-1}  (j=1,\dots, k-1)  \label{eq:recursive1}\\
Q_0&=0, Q_1=1, \text{ and } Q_{j+1}=a_{j+1}Q_j-Q_{j-1}  (j=1,\dots, k-1) \label{eq:recursive2},
\end{align}
then \[\frac{P_k}{Q_k}=[a_1,\dots, a_k],\]
where $[a_1,\dots, a_k]$ is the negative continued fraction of $a_1,\dots, a_k$.
\end{lem}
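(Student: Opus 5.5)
We prove the claim by induction on $k$, after strengthening it to a statement about all lengths at once. The convention is that the negative continued fraction is defined recursively by $[a_1]=a_1$ and
\[
[a_1,\dots,a_k]=a_1-\frac{1}{[a_2,\dots,a_k]}.
\]
Because this recursion peels off the \emph{first} entry, while \eqref{eq:recursive1} and \eqref{eq:recursive2} build the sequences by appending the \emph{last} entry, a direct induction on the given recursion does not match. We therefore pass to the standard matrix formulation, in which both orders can be read off from a single product.

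Set $M(a)=\begin{bmatrix} a & -1 \\ 1 & 0\end{bmatrix}$. First we show by induction on $j$ that
\[
M(a_1)M(a_2)\cdots M(a_j)=\begin{bmatrix} P_j & -P_{j-1} \\ Q_j & -Q_{j-1}\end{bmatrix}.
\]
The base case $j=1$ is exactly the initial data $P_0=1$, $P_1=a_1$, $Q_0=0$, $Q_1=1$. For the inductive step, right-multiplying by $M(a_{j+1})$ sends the first column to $a_{j+1}(P_j,Q_j)^T-(P_{j-1},Q_{j-1})^T$, which is $(P_{j+1},Q_{j+1})^T$ by the recursions. It also sends the second column to $-(P_j,Q_j)^T$. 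Taking determinants, each $M(a)$ has determinant $1$, so $P_{j-1}Q_j-P_jQ_{j-1}=1$. In particular $\gcd(P_k,Q_k)=1$, although this is not needed for the identity itself.

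Next, let $x=[a_2,\dots,a_k]$. Projectively, $M(a)$ acts on $(x,1)^T$ by the Möbius map $x\mapsto a-1/x$. The recursive definition then gives $[a_1,\dots,a_k]=M(a_1)\cdot[a_2,\dots,a_k]$ as a Möbius action, and iterating this with $[a_k]=M(a_k)\cdot\infty$ yields
\[
[a_1,\dots,a_k]=M(a_1)\cdots M(a_k)\cdot\infty .
\]
This is the ratio of the entries of the first column, namely $P_k/Q_k$, which proves the lemma.

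The main obstacle is justifying that no intermediate quantity equals $0$ or $\infty$. We need $Q_j\neq0$ for every $j$, and every tail $[a_i,\dots,a_k]$ must be nonzero, so that each Möbius step is an honest computation with real numbers. In the setting of the paper the entries satisfy $a_{i,j}\ge2$, and with that hypothesis we show by induction that $P_j>P_{j-1}\ge 1$ and $Q_j>Q_{j-1}\ge0$; indeed $P_{j+1}-P_j\ge P_j-P_{j-1}>0$, and the same argument works for $Q$. Likewise every tail satisfies $[a_i,\dots,a_k]>1$. We will state that we assume $a_j\ge2$, which is what the application in Proposition~\ref{prop:square1} uses. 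For merely positive $a_j$ the identity still holds as an equality in $\mathbb{Q}\cup\{\infty\}$ via the Möbius formulation.
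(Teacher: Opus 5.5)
Your proof is correct and is essentially the paper's argument: the paper composes the Möbius maps $g_1(z)=a_1+z$ and $\phi_j(z)=-1/(z+a_j)$, tracks the coefficients $A_j,B_j,C_j,D_j$ (which satisfy the same three-term recursion, with $C_j=A_{j-1}$, $D_j=B_{j-1}$), and evaluates at $0$, which is exactly your product $M(a_1)\cdots M(a_k)$ applied to $\infty$ written without matrices. Your extra care about nonvanishing denominators (assuming $a_j\ge 2$, or reading the identity in $\mathbb{Q}\cup\{\infty\}$) addresses a point the paper passes over silently.
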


\begin{proof}
Consider $\phi_j(z)=\frac{-1}{z+a_j}$, $g_1(z)=a_1+z$, and $g_j(z)=g_{j-1}(\phi_j(z))$ $(j\ge2)$.
Since the composition with $\phi_j$ is a M\"{o}bius transformation, 
$g_j$ can be written as 
\begin{equation}\label{eq:recursion-Mt}
g_j(z)=\frac{A_j+C_j z}{B_j+D_j z},
\end{equation}
with $A_1=a_1, B_1=1, C_1=1, D_1=0$, and $\frac{A_k}{B_k}=g_k(0)=\left[a_1,\dots, a_k\right]$.
From~\eqref{eq:recursion-Mt}, we obtain the recursive relations of $A_j, B_j, C_j, D_j$ for each $j=1,\dots, k-1$:
\begin{equation}
\begin{cases}
&A_{j+1}=a_{j+1} A_{j}-C_{j},\\
&B_{j+1}=a_{j+1} B_{j}-D_{j},\\
&C_{j}=A_{j-1},\\
&D_{j}=B_{j-1}
\end{cases}
\end{equation}
when we set $A_0=1, B_0=0$.
Substituting $C_j$ and $D_j$ into the recursive formula about $A_j$ and $B_j$, respectively, 
we obtain $A_{j+1}=a_{j+1} A_{j}-A_{j-1}$, and $B_{j+1}=a_{j+1} B_{j}-B_{j-1}$ ($j=2,\dots, k-1$).
This is the same as the recursive formula of $\{P_n\}$ and $\{Q_n\}$,
and hence $A_j=P_j$ and $B_j=Q_j$ for $j=1,\dots, k$.

Therefore, we obtain 
\[\frac{P_k}{Q_k}=\frac{A_k}{B_k}=[a_1,\dots, a_k],\]
which proves the claim.
\end{proof}

Now we prove that $d_{i,j}, d'_{i,j}, e_{i,j}$, and $e'_{i,j}$ are related to the recursive formulas \eqref{eq:recursive1} and \eqref{eq:recursive2}.
Note that the number of exceptional spheres in each resolution of $q_i \times N$ and $q_i \times S$ is given as $k_i$ and $k'_i$, respectively.

\begin{prop} \label{prop:integers}
$d_{i,j}, d'_{i,j}, e_{i,j}$, and $e'_{i,j}$ are integers.
In particular, $d_{i,1}$, $e_{i,1}$, $d'_{i,k_i}$ and $e'_{i,k'_i}$ are given by
\begin{align*}
d_{i,1}&=n\cdot \frac{n_i-r_i}{n_i}, \\
e_{i,1}&=n\cdot \frac{r_i}{n_i}, \\
d'_{i,k_i}&={n_i-r'_i}, \\
e'_{i,k'_i}&={r'_i},
\end{align*}
where $r'_i$ is the integer such that $0< r'_i < n_i$ and $r_i r'_i\equiv 1 \pmod{n_i}$.
\end{prop}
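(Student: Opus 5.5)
We solve the tridiagonal systems \eqref{eq:mult1} and \eqref{eq:mult2} explicitly with the sequences of Lemma~\ref{lem:recursive}, and then read off integrality and the boundary values from standard facts about negative continued fractions. Fix $i$ and write $a_j=a_{i,j}$, $k=k_i$. Let $P_j,Q_j$ be the sequences \eqref{eq:recursive1}, \eqref{eq:recursive2} built from $a_1,\dots,a_k$. By Lemma~\ref{lem:recursive}, $P_k/Q_k=n_i/(n_i-r_i)$.

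Two standard facts about these sequences will be used:
\begin{itemize}
\item The determinant identity $P_jQ_{j-1}-P_{j-1}Q_j=-1$, proved by induction from the common recursion. In particular $\gcd(P_k,Q_k)=1$, so $P_k=n_i$ and $Q_k=n_i-r_i$.
\item Since every $a_j\ge 2$, both sequences are strictly increasing, so everything stays positive.
\end{itemize}

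\emph{Part (ii).} The system \eqref{eq:mult2} has data only at the bottom, so I run the recursion from the top. Every equation except the last is $d'_{j-1}-a_jd'_j+d'_{j+1}=0$ with $d'_0:=0$, and $d'_1$ is free. This forces $d'_j=d'_1Q_j$. The last equation then reads $-d'_1(a_kQ_k-Q_{k-1})=-n_i$ after a sign convention, that is, $d'_1Q_{k+1}$-type expression $=n_i$.

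Cleaner is to solve from the bottom using the reversed continued fraction. Set $d'_{k+1}:=-n_i$ formally, or equivalently use the reversed string $[a_k,\dots,a_1]$, whose numerator is again $n_i$ and whose denominator $\bar Q_k$ satisfies $(n_i-r_i)\bar Q_k\equiv1\pmod{n_i}$. This is the classical reversal property; it follows from the determinant identity applied to the matrix product, since reversing transposes the product of $\begin{pmatrix}a_j&-1\\1&0\end{pmatrix}$.

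Solving upward from $d'_0=0$ in the reversed indexing shows that $d'_j$ is a multiple of $d'_{1}$ with integer coefficients, and the final equation $a_kd'_k-d'_{k-1}=n_i$ gives $d'_1=n_i/P_k=1$. Hence $d'_j=Q_j$, which is an integer, and
\[d'_{k}=Q_k=n_i-r_i.\]

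This must be reconciled with the claimed $n_i-r'_i$. This is the step to check carefully: the ordering of the $D_{i,j}$ in \eqref{eq:S^2_fibers} (which end meets $\tilde F_1$ and which meets $\tilde G_i$) determines whether the continued fraction or its reversal appears. The reversal swaps $r_i$ with its inverse $r_i'$ modulo $n_i$. I expect the statement's $r'_i$ to arise exactly from this, via $\bar Q_k\equiv (n_i-r_i)^{-1}\equiv n_i-r'_i \pmod{n_i}$.

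\emph{Part (i).} Here the data sits at the top: $a_1d_1-d_2=n$, and the remaining equations are homogeneous, ending with $d_{k-1}=a_kd_k$. I solve from the bottom upward, setting $d_k=t$, so that $d_{k-j}$ equals $t$ times the $j$-th numerator of the reversed string. The first equation then gives $t\cdot n_i=n$, so $t=n/n_i$. This is an integer because $n_i$ divides $n$. Hence every $d_j$ is $n/n_i$ times an integer, and in particular
\[d_1=\frac{n}{n_i}\cdot \bar Q_k\]
for the appropriate denominator of the reversed fraction. With the correct orientation this is $n(n_i-r_i)/n_i$.

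\emph{The $E$-chains.} These are identical after replacing $n_i-r_i$ by $r_i$, using $n_i/r_i=[b_{i,1},\dots]$. This yields $e_{i,1}=nr_i/n_i$ and $e'_{i,k'_i}=r'_i$.

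The main obstacle is purely bookkeeping: matching which end of each Hirzebruch--Jung chain is adjacent to $\tilde F$ versus $\tilde G_i$, so that the numerator, denominator and reversal land where the statement claims, with $r_i$ in one formula and $r_i'$ in the other. I would settle this with a consistency check against Example~\ref{ex:sigma}, where $n=n_i=3$ and $r_i=1$: there $d_{i,1}=2=3\cdot 2/3$ and $d'_{i,2}=n_i-r'_i=2$, both of which fit.
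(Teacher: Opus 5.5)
Your overall strategy (continuant recursions for the tridiagonal systems, $n_i\mid n$, and the reversal property of the continued fraction) is the paper's, and your integrality argument is fine. The gap is in the explicit boundary values, which are the real content of the ``in particular'' part.

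\textbf{Part (ii).} Solving \eqref{eq:mult2} from the top with $d'_0=0$ gives $d'_{j+1}=a_jd'_j-d'_{j-1}$. The coefficient at step $j$ is $a_j$, not $a_{j+1}$, so $d'_j=d'_1P_{j-1}$, not $d'_1Q_j$. The last equation then gives $d'_1P_k=n_i$, hence $d'_1=1$ and $d'_{i,k_i}=P_{k-1}$. The recursion gives directly $P_k/P_{k-1}=[a_k,\dots,a_1]$, so the reversal property yields $P_{k-1}=n_i-r'_i$. Your value $Q_k=n_i-r_i$ is false as soon as $r_i\neq r'_i$. For example, take $n_i=5$, $r_i=2$, $r'_i=3$. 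Then $5/3=[2,3]$, and \eqref{eq:mult2} has solution $(d'_{i,1},d'_{i,2})=(1,2)$, so $d'_{i,2}=2=n_i-r'_i\neq n_i-r_i$.

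\textbf{Part (i)} has the mirror-image slip. Let $K(a_p,\dots,a_q)$ denote the continuant, i.e. the numerator of $[a_p,\dots,a_q]$. Solving from the bottom gives $d_{i,1}=t\,K(a_2,\dots,a_k)$. Here $K(a_2,\dots,a_k)=Q_k=n_i-r_i$ is the denominator of the original fraction $[a_1,\dots,a_k]$, not of the reversed one. The reversed denominator is $P_{k-1}=n_i-r'_i$, and using it would give the wrong value $n(n_i-r'_i)/n_i$. With $n=5$ in the example above, \eqref{eq:mult1} has solution $(3,1)$, confirming $d_{i,1}=3=n(n_i-r_i)/n_i$.

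You cannot repair this by ``choosing the correct orientation'' of the chain, because the systems themselves fix it. The value $n$ enters at $D_{i,1}$ (adjacent to $\tilde F_1$), and $n_i$ enters at $D_{i,k_i}$ (adjacent to $\tilde G_i$). So as written you derive the swapped values and then assert the correct ones. Your proposed check against Example~\ref{ex:sigma} cannot catch this, since there $r_i=r'_i=1$.

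The fix is just the index bookkeeping above. The paper does it by reindexing \eqref{eq:mult2} from the bottom, which turns it into the system of (i) for the reversed string $[a_k,\dots,a_1]$ with $n_i$ in place of $n$. The same bookkeeping carries over verbatim to the $E$-chains.
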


\begin{proof}
Fix $i$. 
Consider \[d_{j-1}:=\frac{1}{n}d_{i,j}\]
for $j=1,\dots, k$.
If we write $a_j:=a_{i,j}$, $k:=k_i$, equations~\eqref{eq:recursion} can be rewritten as
\begin{equation} \label{eq:recursion-simple}
\begin{cases}
d_{1}=a_{1}d_{0}-1&  \\
d_{j+1}=a_{j+1}d_{j}-d_{j-1} & (j=1,\dots,k-2) \\
a_{k} d_{k-1}-d_{k-2}=0 &
\end{cases}
\end{equation}

Since all $d_j$ can be written in terms of $d_0$ and a constant term, 
denote \[d_j=P_j d_0-Q_j \hspace{0.5em}(j=0,\dots,k-1),\] where $P_0=1, Q_0=0$, and $P_1=a_1, Q_1=1$.
Then $P_j$ and $Q_j$ satisfies the recursive formula \eqref{eq:recursive1} and \eqref{eq:recursive2}.
Hence if we show $d_0$ is an integer, then we prove the claim.

The last equation of~\eqref{eq:recursion-simple} implies
\[(a_kP_{k-1}-P_{k-2})d_0=(a_k Q_{k-1}-Q_{k-2}).\]

When we define \[P_k:=a_k P_{k-1}-P_{k-2}, Q_k:= a_k Q_{k-1}-Q_{k-2},\] 
then we have \[d_0=Q_k/P_k=\frac{1}{[a_1,\dots, a_k]}=\frac{1}{[a_{i,1},\dots, a_{i,k}]}=\frac{n_i-r_i}{n_i}\] by Lemma~\ref{lem:recursive}.
Since $d_{i,1}=n d_0$, we have \[d_{i,1}=\frac{n}{n_i}(n_i-r_i),\]
and all $d_{i,j}$ are integers.

The same argument implies that
\[e_{i,1}=n \cdot \frac{1}{\left[b_{i,1},\dots, b_{i,k'_i}\right]}=n\cdot \frac{r_i}{n_i}.\]

For $d'_{i,j}$, set
 \[d'_{j-1}:=\frac{1}{n_i}d'_{i,k-j+1} ~(j=1,\dots,k)\] 
Then we have the recursive formula \[d'_{j+1}=a_{k-j}d'_{j}-d'_{j-1} ~(j=1,\dots, k-1).\]
In this case, we have a coefficient sequence $\{a_{k-j}\}$ of reverse order, not $\{a_j\}$.
By Lemma~\ref{lem:recursive}, we have \[d'_0=1/\left[a_k,\dots, a_1\right].\] 
It is a well-known fact that the reverse continued fraction of $n/q$ is $n/q'$, where $qq'\equiv 1 \mod n$.
Since \[(n_i-r_i)(n_i-r'_i) \equiv 1 \pmod{n_i},\] where $r_i r'_i\equiv 1 \pmod{n_i}$,
we have \[d'_{0}=\frac{n_i-r'_i}{n_i},\] and hence
\[d'_{i,k_i}=n_i-r'_i.\]

Similarly, applying the same argument for $e'_{i,k'_i}$, we have \[e'_{i,k'_i}=\frac{n_i}{\left[b_{i,k'_i},\dots, b_{i,1}\right]}={r'_i}.\]
\end{proof}

\begin{cor}
\begin{enumerate}[(i)]
\item The self-intersection numbers of the proper transforms $\tilde{F}_1$ and $\tilde{F}_2$ are given by
\begin{align*}
&[\tilde{F}_1]^2=-\sum_{i=1}^b\frac{n_i-r_i}{n_i}=:-c,\\ 
&[\tilde{F}_2]^2=-\sum_{i=1}^b \frac{r_i}{n_i}=-b+c, \end{align*} 
so that $[\tilde{F}_1]^2+[\tilde{F}_2]^2=-b$.

\item $[\tilde{G}_i]^2=-1$ for all $i=1,\dots, b$.
\end{enumerate}
\end{cor}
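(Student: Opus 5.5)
The plan is to follow the computation in Example~\ref{ex:sigma}: intersect each singular fiber with the relevant component and use that a fiber class has zero intersection with every component of a fiber. For (i), the fiber $F'_1=\phi^{-1}(N)=n\tilde{F}_1+\sum_{i,j} d_{i,j}D_{i,j}$ is homologous to a regular fiber. A regular fiber is disjoint from $\tilde{F}_1$, so
\[
0=F'_1\cdot \tilde{F}_1 = n[\tilde{F}_1]^2+\sum_{i=1}^b\sum_{j} d_{i,j}\, D_{i,j}\cdot\tilde{F}_1 .
\]
By the notation, only $D_{i,1}$ meets $\tilde{F}_1$, and it does so transversely in one point, so $D_{i,j}\cdot\tilde{F}_1=\delta_{j,1}$. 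Hence $n[\tilde{F}_1]^2=-\sum_i d_{i,1}$.

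Proposition~\ref{prop:integers} gives $d_{i,1}=n(n_i-r_i)/n_i$. Dividing by $n$ yields $[\tilde{F}_1]^2=-\sum_i (n_i-r_i)/n_i=-c$. The same computation with $F'_2$, the $E_{i,1}$, and $e_{i,1}=n r_i/n_i$ gives $[\tilde{F}_2]^2=-\sum_i r_i/n_i$. Adding the two, each $i$ contributes $(n_i-r_i)/n_i+r_i/n_i=1$, so the sum is $-b$ and $[\tilde{F}_2]^2=-b+c$.

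For (ii), use the other fibration $\phi'$ and its fiber $G'_i={n_i}\tilde{G}_i+\sum_j d'_{i,j}D_{i,j}+\sum_j e'_{i,j}E_{i,j}$ from \eqref{eq:S^2_fibers}. A fiber of $\phi'$ has zero intersection with $\tilde{G}_i$. The only exceptional curves meeting $\tilde{G}_i$ are $D_{i,k_i}$ (from the resolution at $q_i\times N$) and $E_{i,k'_i}$ (from $q_i\times S$), each transversely once. Therefore
\[
0=n_i[\tilde{G}_i]^2+d'_{i,k_i}+e'_{i,k'_i}.
\]
Proposition~\ref{prop:integers} gives $d'_{i,k_i}=n_i-r'_i$ and $e'_{i,k'_i}=r'_i$, whose sum is $n_i$, so $[\tilde{G}_i]^2=-1$.

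The main point needing care is the incidence structure of the resolution: that $\tilde{G}_i$ meets exactly the end curves $D_{i,k_i}$ and $E_{i,k'_i}$, each once transversely, and that $\tilde{F}_1$ meets exactly the $D_{i,1}$. This is built into the notation through the HJ-string's end curves $C_0$ and $C_{r+1}$, which are the local branches of $F_1$ and $G_i$ at $q_i\times N$. I would check it by comparing with the local model $w^n=z_1z_2^{n-q}$, where $z_1=0$ corresponds to one fiber direction and $z_2=0$ to the other, and confirming that the ordering $D_{i,1},\dots,D_{i,k_i}$ is consistent with Proposition~\ref{prop:square1}.
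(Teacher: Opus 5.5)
Your proposal is correct and follows the paper's own proof essentially verbatim. You pair $F'_1$, $F'_2$, $G'_i$ with $\tilde{F}_1$, $\tilde{F}_2$, $\tilde{G}_i$, note that only $D_{i,1}$, $E_{i,1}$ (respectively $D_{i,k_i}$, $E_{i,k'_i}$) meet these curves, each once transversely, and substitute the values $d_{i,1}$, $e_{i,1}$, $d'_{i,k_i}$, $e'_{i,k'_i}$ from Proposition~\ref{prop:integers}; the incidence structure you flag for checking is exactly what the paper builds into its notation.
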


\begin{proof}
Since $F'_1 \cdot \tilde{F}_1=0$, we have \[n [\tilde{F}_1]^2 +\sum_{i=1}^b d_{i,1}=0.\]
By Proposition~\ref{prop:integers}, \[d_{i,1}=n\cdot \frac{n_i-r_i}{n_i},\] and hence
\[[\tilde{F}_1]^2 =-\sum_{i=1}^b \frac{n_i-r_i}{n_i}:=-c.\]
(Note that $c$ is an integer by condition (i) of Proposition~\ref{prop:condition}.)

By the same calculation, we obtain $[\tilde{F}_2]^2$ as
\[[\tilde{F}_2]^2=-\frac{1}{n}\sum_{i=1}^b e_{i,1}=-\sum_{i=1}^b\frac{r_i}{n_i}=-b+c.\]

Consider the case $\tilde{G}_i$. Since $\tilde{G}_i$ transversely intersects once with $D_{i,k_i}$ and $E_{i,k'_i}$, 
the equation $G'_i \cdot \tilde{G}_i=0$ implies that
\[[\tilde{G}_i]^2=\frac{-d'_{i,k_i}-e'_{i,k'_i}}{n_i}=-\frac{n_i-r'_i}{n_i}-\frac{r'_i}{n_i}=-1,\]
proving the claim.
\end{proof}

From the fact $[\tilde{G_i}]^2=-1$, we can prove that the total space of $\phi:X \arrowr S^2$ is diffeomorphic to a ruled surface after some blow-downs.
\begin{thm}\label{thm:total}
The complex surface $X$ can be blown down to $\Sigma_h \times S^2$, where $\Sigma_h$ is homeomorphic to $\Sigma_g / \sigma$.
In particular, $X \cong (\Sigma_h \times S^2) \# (\sum_{i=1}^b (k_i+k'_i)) \overline{\CP^2}$.
\end{thm}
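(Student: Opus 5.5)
We will work with the second fibration $\phi'=\pi_1\circ\rho: X\arrowr \Sigma_h$ rather than $\phi$. Away from the branch points, $\pi_1:(\Sigma_g\times S^2)/(\sigma^{-1}\times\tau)\arrowr \Sigma_g/\sigma^{-1}\cong\Sigma_h$ has fiber $S^2$: the quotient of the free orbit $\Z_n\cdot p\times S^2$ is one copy of $S^2$. Since $\rho$ is an isomorphism away from the singular points $q_i\times N$ and $q_i\times S$, $\phi'$ is an $S^2$-bundle over $\Sigma_h-B$. The only singular fibers of $\phi'$ are the $b$ fibers $G'_i$ described in \eqref{eq:S^2_fibers}. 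Each $G'_i$ is a linear chain of spheres
\[E_{i,1},\dots,E_{i,k'_i},\tilde{G}_i,D_{i,k_i},\dots,D_{i,1},\]
with $\tilde{G}_i$ in the middle. By the preceding Corollary, $[\tilde{G}_i]^2=-1$, and all the $D_{i,j},E_{i,j}$ have self-intersection $\le -2$. The plan is to blow down each $G'_i$ successively until it becomes a smooth $0$-sphere. The result is then a (topologically) ruled surface over $\Sigma_h$ obtained from $X$ by exactly $\sum_i(k_i+k'_i)$ blow-downs, which gives the stated connected-sum formula once we identify the ruled surface with $\Sigma_h\times S^2$.

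The blow-down step is the standard fact that a chain of rational curves forming a degenerate fiber of a ruling, with one $(-1)$-curve, can be contracted. First contract $\tilde{G}_i$ (a smooth $(-1)$-sphere, so Castelnuovo applies). This raises the self-intersections of $E_{i,k'_i}$ and $D_{i,k_i}$ by one and makes them meet. We claim that after each contraction the new chain again contains a $(-1)$-curve, until a single $0$-curve remains. The cleanest argument uses numerical invariants rather than tracking the continued fractions explicitly. The fiber class has square $0$, and each component $C$ satisfies $C\cdot G'_i=0$. For a chain with all self-intersections $\le -2$ the intersection form would be negative definite, which contradicts the existence of a nonzero class $G'_i$ supported on it with square $0$. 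Hence as long as there are at least two components, some component has self-intersection $\ge -1$. By adjunction, combined with $K\cdot G'_i=-2$ for a genus-$0$ fiber, that component must be a $(-1)$-curve, so we contract it. Alternatively, we can make this concrete: the chain $[b_{i,k'_i},\dots,b_{i,1}],1,[a_{i,k_i},\dots,a_{i,1}]$ with $n_i/r_i$ and $n_i/(n_i-r_i)$ is exactly the classical ``zero continued fraction'' chain, using Riemenschneider duality between $n/q$ and $n/(n-q)$, and it collapses to $[0]$.

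I expect the main obstacle to be this induction, in particular guaranteeing that the $(-1)$-curve always sits in the chain without creating cycles. Linearity of the chain is preserved because contracting an interior $(-1)$-curve in a chain joins its two neighbors, and an end $(-1)$-curve can only occur when it meets one neighbor. After all contractions each fiber is a single sphere of multiplicity $1$ and square $0$, so the blown-down surface $Y$ is an honest $S^2$-bundle over $\Sigma_h$. Over $\Sigma_h$ there are exactly two such bundles, and they are distinguished by $w_2$ / the parity of the intersection form. We will show $Y$ is trivial by exhibiting a section of even square. The image of $\tilde{F}_1$ is a section of $\phi'$, since $F_1$ maps isomorphically to $\Sigma_h$. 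Its self-intersection in $Y$ is $-c$ plus the contributions from blow-downs through it. Similarly $\tilde{F}_2$ becomes a disjoint section with square $-b+c$ plus corrections. A more direct route is to note that the two disjoint sections in the ruled surface $Y$ have squares summing to $0$. Because they are disjoint, $[s_1]^2+[s_2]^2=0$ for an $S^2$-bundle, so $[s_1]^2=-[s_2]^2$, and the parity of $[s_1]^2$ detects triviality. Should the parity come out odd, we instead perform one elementary transformation (blow up a point on a fiber and blow down the proper transform), which does not change the diffeomorphism type of $Y\#\CPbar^2$-level counts only if we prefer. The simpler fix is to invoke that $(\Sigma_h\tilde\times S^2)\#\CPbar^2\cong(\Sigma_h\times S^2)\#\CPbar^2$. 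Since $\sum(k_i+k'_i)\ge 1$ whenever $b\ge1$, this yields $X\cong(\Sigma_h\times S^2)\#\left(\sum_{i=1}^b(k_i+k'_i)\right)\CPbar^2$ in all cases.
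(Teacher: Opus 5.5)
Your chain contraction and your fallback are both correct, but together they prove only the diffeomorphism, not the first assertion of the theorem. The contraction of each $G'_i$ is sound, with one correction: it is $C\cdot G'_i=0$ plus connectedness (Zariski's lemma), not adjunction, that excludes $C^2\ge 0$ and pins $C^2=-1$. The final fiber has multiplicity one because $\tilde F_1$ is a section. The fallback $(\Sigma_h\tilde{\times}S^2)\#\CPbar^2\cong(\Sigma_h\times S^2)\#\CPbar^2$ does give the ``in particular'' diffeomorphism when $b\ge 1$; for $b=0$ you still need to observe $[\tilde F_1]^2=-c=0$.

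The first assertion says the complex surface $X$ blows down to $\Sigma_h\times S^2$. You end with an $S^2$-bundle $Y$ whose parity you never compute. The disjointness of the images of $\tilde F_1,\tilde F_2$ that you invoke is asserted but not shown. An elementary transformation applied to $Y$ at a new point is not a blow-down of $X$, so your parenthetical remark does not close this.

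The missing idea, which is the paper's argument, is control of the last contraction in each fiber. By induction, each chain keeps the form $[\alpha_1,\dots,\alpha_p,1,\beta_{p'},\dots,\beta_1]$ with all $\alpha,\beta\ge 2$ until it becomes $[1,1]$:
\begin{itemize}
\item After contracting the middle $(-1)$-curve, both neighbours cannot stay $\le -2$, since the chain would then be negative definite.
\item They cannot both become $-1$ unless the chain is already $[1,1]$.
\item The new $(-1)$-curve cannot be an end of a longer chain, since $[1,x_2,\dots,x_m]$ with $x_j\ge 2$ is negative definite.
\end{itemize}

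Hence $D_{i,1}$ and $E_{i,1}$ survive to the last stage, where they are two $(-1)$-spheres meeting once, with $\tilde F_1$ through one and $\tilde F_2$ through the other. No earlier contraction touches the sections, so their images stay disjoint. Each singular fiber then raises exactly one of $[s_1]^2,[s_2]^2$ by $1$, and you choose which. Since $[\tilde F_1]^2=-c$ and $[\tilde F_2]^2=-(b-c)$ with $c$ an integer in $[0,b]$, contract $D_{i,1}$ in $c$ fibers and $E_{i,1}$ in the others. This yields two disjoint sections of square $0$, so $Y$ has even intersection form and is $\Sigma_h\times S^2$. This choice at the final $[1,1]$ stage is the elementary transformation you were reaching for, realized inside $X$.
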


\begin{proof}
Consider the map $\phi': X \arrowr \Sigma_h$.
It is a sphere bundle except $b$ singular fibers $(\phi')^{-1}(q_i)$. 
Each $(\phi')^{-1}(q_i)$ is written as in \eqref{eq:S^2_fibers}, 
so it is a linear chain of $k_i+k'_i+1$ spheres, with self-intersection $-a_{i,1},\dots, -a_{i,k_i}, -1, -b_{i,k'_i},\dots, -b_{i,1}$.

It is a well-known fact that if $n/q=[a_1,\dots, a_k]$ and $n/(n-q)=[b_1,\dots, b_{k'}]$, then \[[a_1,\dots, a_k,1,b_{k'},\dots,b_1]=0.\]
This implies a linear chain of spheres with self-intersection \[-a_1,\dots,-a_k,-1, -b_{k'},\dots,-b_1\] can be blown down to a single sphere of self-intersection 0.
Since $G'_i$ constitutes such a chain, this implies that each singular fiber of the fibration $\phi':X\arrowr \Sigma_h$ 
can be blown down to a sphere of self-intersection $0$ in the sphere fibration $\tilde{\phi'}: Y \arrowr \Sigma_h$.

Remark that in the construction of HJ-resolution written in Section~\ref{sec:cqsing}, 
$z_1=0$ in $W$ corresponds to a section of $\pi_1:(\Sigma_g \times S^2)/(\sigma^{-1}\times \tau) \arrowr \Sigma_h$, 
and $z_2=0$ corresponds to a fiber.
Since the multiplicities $\mu_j$ in \eqref{eq:holo} correspond to $d'_{i,j}$ and $e'_{i,j}$, $d'_{i,1}=e'_{i,1}=\mu_1=1$.
This implies that each fiber $\tilde{\phi'}^{-1}(q_i)$ has a multiplicity $1$, and hence $\tilde{\phi'}$ is a sphere bundle.

Finally, we claim that the intersection form of $Y$ is even.
Since $[\tilde{F}_1]^2+[\tilde{F}_2]^2=-b$ and the number of singular fibers of $\phi'$ is $b$, 
we can choose the last blow-down sphere for each singular fiber of $\phi'$ such that we have two sections of self intersection 0, depicted as Figure~\ref{fig:bd}.

\begin{figure}[h]
\centering
\includegraphics{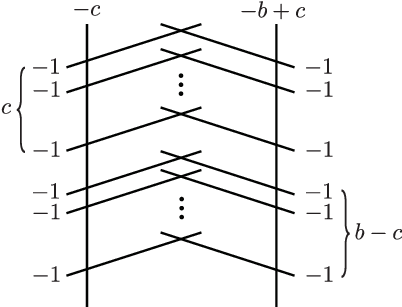}
\caption{The last blow down procedure. Blow-down $c$ spheres at the left and $b-c$ sphere at the right.}
\label{fig:bd}
\end{figure}
 
Therefore, we have a configuration of a sphere bundle over $\Sigma_h$ with two sections of self-intersection 0, which is a spin manifold.
Hence it is diffeomorphic to $S^2 \times \Sigma_h$.
\end{proof}

Note that this property mainly relies on the specific action $(n, 1/n+n-1/n)$ on $S^2$.
In~\cite{P}, it is remarked that the total space $X$ is a ruled surface.

\begin{exam} \label{ex:total}
For the action given in Example~\ref{ex:sigma} (that is, $\sigma=(3,1/3+1/3+1/3)$), there are a total of 9 spheres in $\phi^{-1}(N)$ and $\phi^{-1}(S)$.
So by Theorem~\ref{thm:total}, we have \[X\cong (T^2 \times S^2)\#9\CPbar^2.\]
\end{exam}

\subsection{Main results}
Consider an action \[\sigma_{2n+1}=\left(2n+1, \frac{1}{2n+1}+\frac{n}{2n+1}+\frac{n}{2n+1}\right)\] acting on $\Sigma_{3n+1}$.
Since $\gcd(n,2n+1)=1$, there are three multiple points for this action.
Consider the quotient $(\Sigma_{3n+1} \times S^2) / (\sigma^{-1}_{2n+1} \times \tau_{2n+1})$
and the HJ resolution 
\[\rho_{n}: X_{n} \arrowr (\Sigma_{3n+1} \times S^2) / (\sigma^{-1}_{2n+1} \times \tau_{2n+1}),\] 
where $\tau_{2n+1}$ is the action on $S^2$ of order $2n+1$ with two fixed points.
Composing with the projection map $\pi_2: (\Sigma_{3n+1} \times S^2) / (\sigma^{-1}_{2n+1} \times \tau_{2n+1}) \arrowr S^2$, 
we have the fibration \[\phi_{n}=\pi_2 \circ \rho_n: X_{n} \arrowr S^2\]
with two singular fibers $F^1_{n}=\phi^{-1}_{n}(N), F^2_{n}=\phi^{-1}_{n}(S)$, 
whose monodromies are $\sigma^{-1}_{2n+1}=\sigma^{2n}_{2n+1}$ and $\sigma_{2n+1}$, respectively.

To figure out two singular fibers, consider the negative continued fraction for $\frac{2n+1}{n}$ and $\frac{2n+1}{n+1}$:
\begin{align}
\frac{2n+1}{n}&=[3, \underbrace{2,2, \dots, 2}_{n-1}], \\
\frac{2n+1}{n+1}&=\left[2, n+1\right].
\end{align}
Hence, the singular fiber $F^1_{n}, F^2_{n}$ with the monodromies
\[\sigma^{-1}_{2n+1}=\left(2n+1, \frac{2n}{2n+1}+\frac{n+1}{2n+1}+\frac{n+1}{2n+1}\right)\] and
\[\sigma_{2n+1}=\left(2n+1, \frac{1}{2n+1}+\frac{n}{2n+1}+\frac{n}{2n+1}\right)\] are given as Figure~\ref{fig:sfiber_n}.
Also, since there are $4n+5$ spheres in the two singular fibers, the total space $X_{n}$ is diffeomorphic to $(T^2 \times S^2) \# (4n+5)\CPbar^2$.

\begin{figure}[ht]
\centering
\includegraphics[scale=0.6]{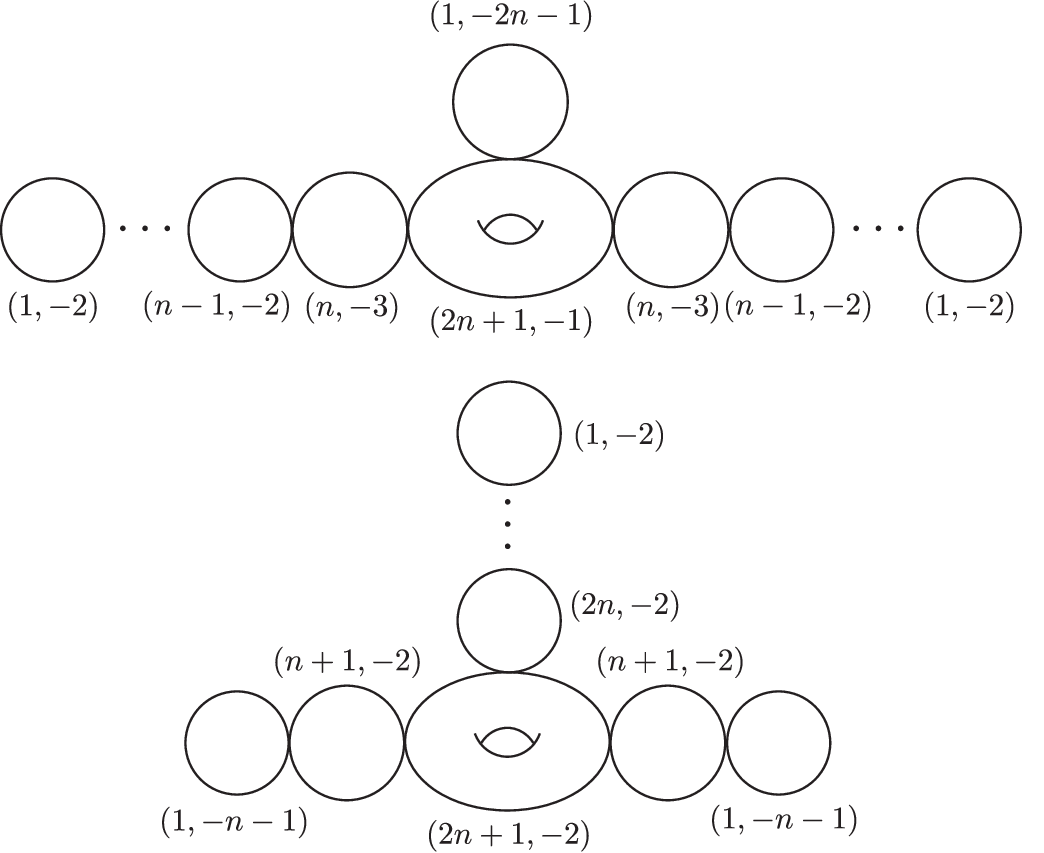}
\caption{Singular fibers $F^1_{n}$ (above), and $F^2_{n}$ (below).}
\label{fig:sfiber_n}
\end{figure}

Restricting the map $\phi_{n}$ to each fibered neighborhood of $F_{n}^1, F_{n}^2$, 
we have two fibrations \[\phi_{n,i}: X_{n,i}\arrowr D^2\] for $i=1,2$ 
whose monodromies are given by $\sigma_{2n+1}^{-1}$, $\sigma_{2n+1}$, respectively.
By perturbing such maps, 
we obtain Lefschetz fibrations whose monodromy factorizations give factorizations of words of $\sigma_{2n+1}^{-1}$ and $\sigma_{2n+1}$, proved in Sections~\ref{sec:splitgen1} and \ref{sec:splitgen2}.

\begin{thm}\label{thm3}
For each $i=1,2$ there is a perturbation $(\phi_{n,i})_\ep :X_{n,i} \arrowr D^2$ such that $(\phi_{n,i})_\ep$ becomes a Lefschetz fibration whose monodromy factorization is given by
\begin{align}
\sigma_{2n+1}^{-1}&=t_{\delta}\prod_{i=1}^n (t_{d_i} t_{d'_i}) \left(\prod_{j=6}^1 \prod_{i=n}^1 t_{c_{i,j}} \right), \quad (i=1) \label{eq:genfact_1}\\
\sigma_{2n+1}&= (t_{d} t_{d'})^2 \left(\prod_{j=12}^7 \prod_{i=n}^1 t_{c_{i,j}} \right) \prod_{i=1}^n (t_{d_i} t_{d'_i}). \quad (i=2) \label{eq:genfact_2}
\end{align}
\end{thm}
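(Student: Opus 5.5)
Following Matsumoto \cite{M3}, I would build explicit local charts on each $X_{n,i}$, perturb the defining function in a single chart, and read off the vanishing cycles through two branched-covering descriptions of the fiber. I treat $i=1$ and $i=2$ in parallel.

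\textbf{Step 1: an algebraic model.} Realize $\Sigma_{3n+1}$ as a cyclic cover $y^{2n+1}=x(x-1)^n(x-\lambda)^n$ of $\CP^1$ (after normalization), so that $\sigma_{2n+1}$ is $y\mapsto \zeta y$ with $\zeta=e^{2\pi i/(2n+1)}$. The valency $(1/(2n+1),n/(2n+1),n/(2n+1))$ is the one prescribed by Proposition~\ref{prop:condition}, and the quotient is $\Sigma_0$. The surface also carries the hyperelliptic involution used in Figure~\ref{fig:hyperelliptic}. This gives the two branched-cover structures I plan to use: the $\Z_{2n+1}$-cover over $\CP^1$ and the hyperelliptic double cover $q$.

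\textbf{Step 2: charts on $X_{n,i}$.} With $t$ the coordinate on $D^2$, the invariant functions $u=x$ and $s=t^{2n+1}$ together with monomials $y^a t^b$ of weight $0$ give charts on $(\Sigma_{3n+1}\times D^2)/(\sigma^{-1}\times\tau)$. At each of the three points $q_j\times 0$ I would take the HJ charts $(w,z_1,z_2)=(h,f,g)$ of Section~\ref{sec:cqsing}. Following the chains of Figure~\ref{fig:sfiber_n} (namely $[3,2,\dots,2]$ and $[2,n+1]$), this writes $\phi_{n,i}$ as a monomial $z_1^{\lambda}z_2^{\mu}$ in each chart, with exponents given by the multiplicities from Proposition~\ref{prop:integers}.

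\textbf{Step 3: the perturbation.} I would perturb the equation of the cover, either replacing $y^{2n+1}$ by $y^{2n+1}+\ep$ or deforming the fiber as $t$ varies, chosen so that every critical point lies in one main chart, as in \cite{M3}. In that chart the map becomes an explicit polynomial in two variables. I would then count its nondegenerate critical points and check this against the Euler characteristic: for $i=1$ there are $6n+2n+1$, and for $i=2$ there are $4+6n+2n$. Each count should equal $e(X_{n,i})-(2-2(3n+1))$, and both can be computed from Figure~\ref{fig:sfiber_n}.

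\textbf{Step 4: vanishing cycles.} Fix a reference fiber near $\partial D^2$ and choose paths to the critical values, ordered by argument. Project each local vanishing cycle to the $x$-line, where it becomes an arc between branch points that collide. Then lift it through the hyperelliptic quotient $q$ to get a curve in $\Sigma_{3n+1}$. The cyclic symmetry $\sigma$ permutes the critical values in orbits of size $n$ (or pairs), which explains the indexing $c_{i,j}$ and the pairs $d_i,d'_i$. The product formulas \eqref{eq:genfact_1} and \eqref{eq:genfact_2} then follow because the total monodromy around $\partial D^2$ equals $\sigma_{2n+1}^{\mp1}$.

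\textbf{Main obstacle.} Step 4 is the hard part: transporting the local cycles consistently to one reference fiber and identifying them as the specific curves in Figure~\ref{fig:vc}. My first approach would be to track the motion of branch points in the $x$-plane (braid monodromy) as the parameter runs along each path, and use the hyperelliptic picture to convert that braid motion into half-twists. As a consistency check I would verify the product relation in the mapping class group using the chain and star relations.
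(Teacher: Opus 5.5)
Your overall plan (Matsumoto-style charts, a perturbation whose critical points all sit in one chart, vanishing arcs read off from branched-cover projections) is the paper's. The proposal has genuine gaps, however, and the first one is already in Step~1.

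\textbf{The model in Step 1 is wrong.} The quotient $\Sigma_{3n+1}/\sigma_{2n+1}$ is a torus, not a sphere. All $n_i=1$, so \eqref{eq:genus} reads $6n=(2n+1)(2h-2)+6n$, i.e.\ $h=1$. This is why $X_n\cong(T^2\times S^2)\#(4n+5)\CPbar^2$. Your curve $y^{2n+1}=x(x-1)^n(x-\lambda)^n$ is totally ramified over three points of $\CP^1$ and has genus $n$, not $3n+1$. The model behind the paper's main chart is the cyclic cover $w^{2n+1}=y^n$ of the elliptic curve $x^2=y^3+1$, branched at the two points with $y=0$ and at infinity. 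This is what $\phi_{n,1}|_U=y^nf^{2n+1}$, with $f=x^2-y^3-1$, encodes. The lift of $x\mapsto -x$ then gives the hyperelliptic quotient $\Phi(x,y)=(x^2,y)$.

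\textbf{Step 3 is not yet a construction.} Replacing $y^{2n+1}$ by $y^{2n+1}+\ep$ changes the fiber curve rather than deforming the map $\phi_{n,i}$. It also says nothing about compatibility with the other charts. What is needed is an explicit deformation such as $f(y^nf^{2n}-\ep^{2n})$ on $U$ (resp.\ $yf(y^nf^{2n}-\ep^{2n})$ for $X_{n,2}$), together with the perturbed transition maps \eqref{eq:charte1}, \eqref{eq:charte2}. With these, the multiplicity-$(2n+1)$ torus breaks into the reduced torus $f=0$ and the components $yf^2=z_n^l\ep^2$, meeting at nodes; these account for the twists along $\delta,d_i,d'_i$ in \eqref{eq:genfact_1}. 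The other $6n$ critical points lie on $x=0$ and are located from an explicit equation.

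\textbf{The decisive gap is the last sentence of Step 4.} That the local monodromies multiply to $\sigma_{2n+1}^{\mp1}$ is necessary, but it does not single out a factorization. The theorem asserts specific curves $c_{i,j},d_i,d'_i,\delta,d,d'$, defined combinatorially in \eqref{eq:vc_comb1}, \eqref{eq:dd'}, \eqref{eq:vc_comb2} from the chain $t_1,\dots,t_{6n+2}$. Two ingredients are missing.

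First, you need an actual identification of every vanishing arc. The collision happens among branch points of the hyperelliptic double cover $\Phi$; the $(2n+1)$-fold projection is only an auxiliary device for drawing the arcs, so the roles you give the two projections are mixed up. The paper computes the discriminant of $\psi_n(X,y)-t$ in $X$ to find the branch points $b_l(t)$ of the projection to the $y$-line. That projection is first extended over the charts $U_{1,j}$ (resp.\ $U_{3,j}$) so that it becomes a branched cover of a disk. The paper then determines through which $B_{l(i,j)}$ each arc passes along each Hurwitz path, and reorders the word by commutations and one Hurwitz move.

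Second, you need the global conjugation between the reference fiber and the model surface carrying the chain, which you relegate to a consistency check. In the paper this is the content of Theorems~\ref{thm:comb_1} and~\ref{thm:comb_2}. Ishizaka's expressions $\sigma_{2n+1}^{2n}=(t_1\cdots t_{6n+2})^6$ and $\sigma_{2n+1}=(t_1\cdots t_{6n+2})^{12n}$ are rewritten by braid and chain relations into the stated words. The geometric curves are then shown to agree with them after a left-handed twist along $d$ (resp.\ along $d$ and $d'$).

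Finally, your assertion that $\sigma$ permutes the critical values in orbits has no basis. The regular arrangement of the critical values comes out of the explicit critical-point equation, not from a $\sigma$-action.
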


Combining two factorizations, we have a monodromy factorization of a genus-$(3n+1)$ Lefschetz fibration over $S^2$ as in Theorem~\ref{thm:main}.

%
%
%

\section{Construction of $X_1$ and its perturbation}\label{sec:splitgen1}
The contents of Sections~\ref{sec:splitgen1} and \ref{sec:splitgen2} are organized as follows.
First, for each singular fiber, we construct the regular neighborhood $X_{n,j}$ of $F^j_n$ by using several local charts. 
On each chart, we define the map $\phi_{n,j}$, which provides the fibration structure.
We show that this map is well-defined with respect to each transition map.
After establishing these constructions, we define a smooth perturbation $(X_{n,j})_\epsilon$ of the total space $X_{n,j}$ 
by introducing smooth perturbations of the transition maps. 
Correspondingly, we also define a perturbation $(\phi_{n,j})_\epsilon$ of $\phi_{n,j}$ on each chart.

Next, in order to show that
\[(\phi_{n,j})_\epsilon : (X_{n,j})_\epsilon \to D^2\]
is a Lefschetz fibration, we observe that it suffices to work on a single chart. By computing derivatives on that chart, we determine the critical points and singular values of the map. Since it is hard to obtain explicit solutions, we instead consider approximate solutions.
Moreover, except for the fiber $(\phi_{n,j})_\epsilon^{-1}(0)$, we can show that all critical points are nondegenerate. 
For the zero-fiber, it has only nodal singularities by the construction of the defining function. 
Therefore, it follows that the map defined above is indeed a Lefschetz fibration.

To find the vanishing cycles of the Lefschetz fibrations,
consider the two projection maps
\[ \Phi : \C_{(x,y)}^2 \to \C_{(X,y)}^2 \quad\text{and}\quad \pi_y : \mathbb{C}_{(X,y)}^2 \to \mathbb{C}_y\]
defined by $\Phi(x,y)=(x^2,y)=:(X,y)$ and $\pi_y(X,y)=y$,
where $\C_{(x,y)}^2, \C_{(X,y)}^2, \C_y$ denote complex spaces together with the corresponding coordinate variables on each space. 
If we simply write $(\phi_{n,j})_\epsilon$ as $\phi_n$,
$\phi_n:\C_{(x,y)}^2 \arrowr \C_t$ is transformed under the map $\Phi$ into 
\[\psi_n: \C_{(X,y)}^2 \arrowr \C_t.\]
Denote \[F_n^t=\phi_n^{-1}(t)\cap U, \quad f_n^t=\psi_n^{-1}(t)\cap \Phi(U), \quad\text{and}\quad \Phi_n^t=\Phi|_{F_n^t}: F_n^t \arrowr f_n^t.\]
$\Phi_n^t$ is the hyperelliptic double branched cover.
Moreover, after some extension \[\hat{\pi}_y: \hat{f}_n^t \arrowr \hat{\C}_y\] of $\pi_y: f_n^t \arrowr \C_y$, 
$\hat{\pi}_y$ is the $(2n+1)$-fold branched cover.
Observe that the two branch points of $\Phi_n^t$ merge into a single point which becomes a critical point of $\phi_n$, 
as $t$ moves to a singular value,
and that the monodromy around such a singular value is given by the Dehn twist along an appropriate path connecting these two points.
After fixing a regular value ${t_O}$ and a Hurwitz system in $\C_t$,
we first determine the path in $\C_y$, 
and we lift it through the two successive branched coverings to obtain the corresponding vanishing cycle.
In this process, the movement of the branch points of $\pi_y$ plays a crucial role.

Finally, the monodromy factorization obtained through the procedure is determined up to fiber automorphism, 
equivalently, up to global conjugation from the viewpoint of words in the mapping class group.
So this global conjugation must be identified to complete the proof.
Since the monodromies of two singular fibers we constructed should be given by $\sigma_{2n+1}^{-1}$ and $\sigma_{2n+1}$, 
we use relations in the mapping class group to obtain expressions for the word factorization of $\sigma_{2n+1}^{-1}$ and $\sigma_{2n+1}$, respectively, which are globally conjugate to those derived geometrically above.

\subsection{Construction of local charts}
We now construct a system of local charts to realize a regular neighborhood of $F^1_{n}$.
Let $X_{n,1}= \phi^{-1}_{n}(D^2_+)$ be the inverse image of the upper half-disk of $S^2$, and
let $\phi_{n,1}: X_{n,1} \arrowr D^2$ denote the restriction of $\phi_{n}$.
The following description is summarized in the following proposition.

\begin{prop} \label{prop:charts1}
There is a system of local charts
\[X_{n,1} \cong U\cup \cup_{i=1,2}(U_{i,1}\cup \cdots U_{i,n}) \cup U_{3,1} \cup U_{3,2},\]
and a map $\phi_{n,1}|_{U_{i,j}}:U_{i,j} \arrowr \C_t$ compatible with the transition maps,
such that it can be perturbed to a map
\[(\phi_{n,1})_\ep: (X_{n,1})_\ep \arrowr \C_t,\]
so that $(\phi_{n,1})_\ep$ is a Lefschetz fibration.
The transition maps and $(\phi_{n,1})_\ep$ on each chart of $(X_{n,1})_\ep$ are described in ~\eqref{eq:charte1} and ~\eqref{eq:phie1}.

Here, $(X_{n,1})_\ep$ is constructed by smoothly perturbing transition maps, so $(X_{n,1})_\ep$ is diffeomorphic to $X_{n,1}$ and $(\phi_{n,1})_\ep$ is a smooth perturbation of $\phi_{n,1}$.
\end{prop}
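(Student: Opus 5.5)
Following Matsumoto's approach in \cite{M3}, the plan is to build the charts directly from the HJ-resolutions of the three cyclic quotient singularities on $F^1_n$. Then I write $\phi_{n,1}$ as a monomial on each chart, perturb only the chart that contains the multiple component, and check that all critical points of the perturbed map lie in that single chart and are nondegenerate.

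\textbf{Charts.} The three multiple points of $\sigma_{2n+1}^{-1}$ carry local types $\frac{2n}{2n+1}$, $\frac{n+1}{2n+1}$ and $\frac{n+1}{2n+1}$. By Example~\ref{ex:Anq}, the quotient $(\Sigma_{3n+1}\times D^2)/(\sigma^{-1}\times\tau)$ has one $A_{2n+1,1}$-type point and two $A_{2n+1,n}$-type points, resolved respectively by one $(-(2n+1))$-curve (up to orientation conventions) and by the string $[3,2,\dots,2]$ of length $n$, as in Figure~\ref{fig:sfiber_n}.
\begin{enumerate}[(1)]
\item For each resolution I take the standard toric charts of the HJ-resolution from Section~\ref{sec:cqsing}. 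These are $\C^2$'s with coordinates $(u_k,v_k)$ glued by $u_{k+1}=v_k^{-1}$ and $v_{k+1}=u_kv_k^{a_k}$. This produces the charts $U_{i,1},\dots,U_{i,n}$ for $i=1,2$ and $U_{3,1},U_{3,2}$.
\item The remaining chart $U$ is a neighbourhood of $\tilde F_1$ minus the attaching points. Because $\tilde F_1\cong\Sigma_h=T^2$ with $[\tilde F_1]^2=-c$ (Corollary), I model $U$ on a punctured neighbourhood, with coordinates $(x,y)$ in which $\tilde F_1$ is presented through the hyperelliptic/$(2n+1)$-fold cover structure used later.
\item On every chart I set $\phi_{n,1}$ to be the monomial $t=h^{\,2n+1}$, or equivalently $t=u^{d_{k}}v^{d_{k+1}}$, with exponents equal to the multiplicities $d_{i,j}$ from Proposition~\ref{prop:square1}. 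Proposition~\ref{prop:integers} guarantees these exponents are integers. Compatibility with the transition maps then follows from the recursion \eqref{eq:recursion}.
\end{enumerate}

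\textbf{Perturbation.} I perturb the transition maps by $\ep$-small smooth terms supported near the gluing regions. On $U$, I replace the defining function by a polynomial $\phi_\ep(x,y)$ of the shape $t=y^{2n+1}\cdot(\text{quadratic in }x)+\ep(\cdots)$. The transition maps into the $U_{i,j}$ are adjusted so that on every auxiliary chart $(\phi_{n,1})_\ep$ stays a submersion. For example, $t=u^av^b+\ep$-terms can be arranged so that $(\partial_u,\partial_v)$ never vanish simultaneously on the chart's compact piece. Since each perturbation is smooth and small, $(X_{n,1})_\ep\cong X_{n,1}$.

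\textbf{Critical points.} I solve $\partial_x\phi_\ep=\partial_y\phi_\ep=0$ on $U$ approximately in $\ep$. The first equation is linear in $x$ after $\Phi(x,y)=(x^2,y)$, and the second reduces to a one-variable polynomial in $y$. Away from the zero fibre I will check nondegeneracy by showing the Hessian determinant is nonzero at the leading order in $\ep$. For the fibre over $0$, I will choose the perturbation so that it is locally $x^2-y^2$-like, i.e.\ nodal. I expect this step to be the main obstacle: the exact roots are not explicit, so I must both control approximate roots and ensure no critical values collide. I would first try a scaling $y=\ep^{\alpha}\eta$ that makes the leading-order system explicit, and then apply the implicit function theorem.

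Counting Euler characteristics gives the number of nodes, which should match the $6n+2n+1$ factors in \eqref{eq:genfact_1}. This count serves as a consistency check that no critical points were missed in the other charts.
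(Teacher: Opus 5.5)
There is a genuine gap. Your toric charts for the Hirzebruch--Jung strings, and the compatibility of the unperturbed monomials via the multiplicity recursion, are fine. But the proposition's content is the explicit compatible perturbation, and that is exactly what your proposal does not supply.

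First, your model on $U$ is wrong. The unperturbed map must have divisor $(2n+1)T+nD_1+nD_2$. The paper realizes this as $y^nf^{2n+1}$ with $f=x^2-y^3-1$ on $U=\{|f|<r_n\}$. The two disks $y=0$ near $x=\pm1$ carry multiplicity $n$ and glue to $U_{i,1}$ via $(\mathfrak{s}_1,\mathfrak{t}_1)=(t^{-1},st^3)$, and the gluing to $U_{3,1}$ is chosen so that $\xi_1\eta_1^{2n+1}$ matches $y^nf^{2n+1}$. Neither $h^{2n+1}$ nor $y^{2n+1}\cdot(\text{quadratic in }x)$ has this divisor.

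Second, ``$+\ep(\cdots)$'' and ``I will choose the perturbation so that the zero fibre is nodal'' leave the main step undone. What is needed is a family that agrees on all overlaps:
$f(y^nf^{2n}-\ep^{2n})$ on $U$;
$\mathfrak{t}_j\prod_{k=j}^{n-1}(\mathfrak{s}_j\mathfrak{t}_j+(z_n^{j-1}-z_n^k)\ep^2)$ on $U_{i,j}$;
$\xi_1\eta_1(\eta_1^{2n}-\ep^{2n})$ on $U_{3,1}$.
Every transition map in \eqref{eq:charte1} must be rebuilt accordingly. These changes are global on the auxiliary charts, not supported near the gluing regions. Otherwise the interior of $U_{i,j}$ still carries $\mathfrak{s}_j^{n-j}\mathfrak{t}_j^{n-j+1}$, which is critical along the whole line $\mathfrak{t}_j=0$. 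This factorization is what makes the zero fibre nodal by construction. The torus drops to multiplicity one, and $y^nf^{2n}-\ep^{2n}$ splits into the $n$ smooth curves $yf^2=z_n^l\ep^2$. These curves and the torus meet only the sphere $\xi_1=0$, transversally.

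Your expectations about where the critical points lie are also off. With this perturbation, the $2n+1$ nodes of the zero fibre are not in $U$. They lie on the exceptional sphere $\xi_1=0$ in $U_{3,1}$, at $\eta_1\in\{0,z_{2n}^i\ep\}$, and they account for the factors $t_\delta\prod_i t_{d_i}t_{d'_i}$. Only the $6n$ critical points off the zero fibre are in $U$. So your claims that every auxiliary chart stays a submersion and that all critical points lie in $U$ are unsupported, and you give no mechanism that would realize them.

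Those $6n$ points have $x=0$ (a nonzero $x$ forces $f=0$, a contradiction), $y$ close to a cube root of $-1$, and $|f|\approx c_n\ep$. The relevant scaling is therefore $f=\ep w$ near $y^3=-1$. Your scaling $y=\ep^{\alpha}\eta$ probes a neighbourhood of $y=0$, where the $n$ spurious roots of \eqref{eq:criteq} lie outside $U$ and are discarded.

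Finally, you do not need to keep critical values apart. For $n=3k$, three critical points share each singular fibre, and the zero fibre carries $2n+1$ nodes; the paper's definition of a Lefschetz fibration allows both.
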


For a sufficiently small $r_n>0$, we define the open subset 
\[U=\{(x,y)\mid |f(x,y)|=|x^2-y^3-1|<r_n\}\subset \C^2\]
of $\C^2$.
The precise choice of $r_n$ will be specified later.
Define the restriction $\phi_{n,1}|_U: U \arrowr \C$ by 
\[\phi_{n,1}|_U=y^n f^{2n+1}(x,y).\]
Then the zero-fiber $\phi_{n,1}^{-1}(0)$ consists of a punctured torus $T^0$ of multiplicity $2n+1$ and two disks $D_1, D_2$ of multiplicity $n$, each intersecting $T^0$ transversely at a point.
We attach two 2-handles 
\[U_{i,1}=\{(\mathfrak{s}_1, \mathfrak{t}_1)\in \C^2 \mid |\mathfrak{s}_1|<\delta_1, |\mathfrak{t}_1|<\delta_1 \}\] 
for $i=1,2$ to $U$ along the attaching circles $\partial D_i$ with framing $-3$.
Equivalently, using local coordinates $(s,t)$ near $(1,0)\in U$ defined by $s=y$ and $t=f(x,y)$, 
the attaching map for $U_{i,1}$ is given by $(\mathfrak{s}_1,\mathfrak{t}_1)=(t^{-1}, st^3)$.
The map $\phi_{n,1}$ then extends over $U_{i,1}$ as 
\[\phi_{n,1}|_{U_{i,1}}=\mathfrak{s}_1^{n-1}\mathfrak{t}_1^n.\]
Subsequent charts $U_{i,j}=\{(\mathfrak{s}_j, \mathfrak{t}_j) \}$ for $j=2,\dots, n$ are attached to $U_{i,j-1}$ via the transition maps 
\[(\mathfrak{s}_j, \mathfrak{t}_j)=(\mathfrak{t}_{j-1}^{-1}, \mathfrak{s}_{j-1}\mathfrak{t}_{j-1}^2),\] 
so that the map further extends as \[\phi_{n,1}|_{U_{i,j}}=\mathfrak{s}_j^{n-j}\mathfrak{t}_j^{n-j+1}.\]
In particular, on the last chart, we have \[\phi_{n,1}|_{U_{i,n}}=\mathfrak{t}_n.\]

Near the point at infinity in the $U$-chart, introducing the homogeneous $z$-coordinate allows us to rewrite $f(x,y)=x^2 -y^3 -1$ in the projective form:
\begin{align}\label{eq:infinite}
\tilde{f}(x,y,z):=\frac{x^2}{z^2}-\frac{y^3}{z^3}-1.
\end{align}

Recall that as $z\arrowr 0$ along the curve $\tilde{f}(x,y,z)=0$, $(x,y)$ approaches $(1,0)$.
By setting \[u=\frac{z}{x}, v=\frac{y}{x},\] the expression \eqref{eq:infinite} becomes 
\[\tilde{f}(x,y,z)=\frac{1}{u^{2}}-\frac{v^3}{u^3}-1:=g(u,v).\]

We now attach a chart $U_{3,1}=\{(\xi_1, \eta_1)\}$ to $U$ via the relations: 
\begin{align} \label{eq:infchart}
\xi_1=v, \eta_1=\frac{\sqrt[2n+1]{1-(1+u^2)g(u,v)}^n}{v} g(u,v).
\end{align}
Note that the $(2n+1)$-th root in~\eqref{eq:infchart} is well-defined, since $u$ and $g(u,v)$ can be chosen to be arbitrarily small.
We then define \[\phi_{n,1}|_{U_{3,1}}=\xi_1 \eta_1^{2n+1}.\]
Finally, attach a chart $U_{3,2}=\{(\xi_2, \eta_2)\}$ to $U_{3,1}$ with the transition map 
\[(\xi_2, \eta_2)=(\eta_1^{-1}, \xi_1 \eta_1^{2n+1}),\] so that 
\[\phi_{n,1}|_{U_{3,2}}=\eta_2.\]
We thus clearly obtain the identification 
\[X_{n,1} \cong U\cup \cup_{i=1,2}(U_{i,1}\cup \cdots U_{i,n}) \cup U_{3,1} \cup U_{3,2}.\]

Now consider a perturbation $(X_{n,1})_{\ep}$.
On the main chart $U$, the perturbed map is defined as
\[(\phi_{n,1})_{\ep}|_U=f(x,y)(y^n f(x,y)^{2n}-{\ep^{2n}}).\]
From this, the transition maps are modified as follows:
\begin{equation} \label{eq:charte1}
    \begin{aligned}
    U_{i,1}\cap U&: (\mathfrak{s}_1, \mathfrak{t}_1)=(t^{-1}, t(st^2-{\ep^{2}})), \\
    U_{i,j+1} \cap U_{i,j}&:(\mathfrak{s}_{j+1}, \mathfrak{t}_{j+1})
    =(\mathfrak{t}^{-1}_{j}, \mathfrak{t}_{j}(\mathfrak{s}_j \mathfrak{t}_j +(z_n^{j-1}-z_n^{j}){\ep^{2}})), \\
    U_{3,1} \cap U&: (\xi_1,\eta_1)=\left(v, \frac{\sqrt[2n+1]{1-(1+u^2)g(u,v)}^n}{v} g(u,v)\right),\\
    U_{3,2}\cap U_{3,1}&: (\xi_2, \eta_2)=(\eta_1^{-1}, \xi_1 \eta_1(\eta_1^{2n}-{\ep^{2n}})),
    \end{aligned}
\end{equation}
where $i=1,2$ and $j=1,\dots, n-1$.
This allows us to extend $(\phi_{n,1})_{\ep}$ as follows:
\begin{equation} \label{eq:phie1}
    \begin{aligned}
    (\phi_{n,1})_{\ep}|_U&=f(x,y)(y^n f(x,y)^{2n}-{\ep^{2n}})\\
    (\phi_{n,1})_{\ep}|_{U_{i,j}}&=
    \mathfrak{t}_j \prod_{k=j}^{n-1}(\mathfrak{s}_j \mathfrak{t}_j+(z_n^{j-1}-z_n^k){\ep^{2}}), \text{ } (i=1,2, j=1,\dots, n-1)\\
    (\phi_{n,1})_{\ep}|_{U_{i,n}}&=\mathfrak{t}_n, \quad (i=1,2)\\
    (\phi_{n,1})_{\ep}|_{U_{3,1}}&
    =\xi_1 \eta_1(\eta_1^{2n}-{\ep^{2n}})=\xi_1 \eta_1 \prod_{i=0}^{2n-1}(\eta_1 -z_{2n}^i {\ep}),\\
    (\phi_{n,1})_{\ep}|_{U_{3,2}}&=\eta_2.
    \end{aligned}
\end{equation}

\begin{figure}
\centering
\includegraphics[scale=0.5]{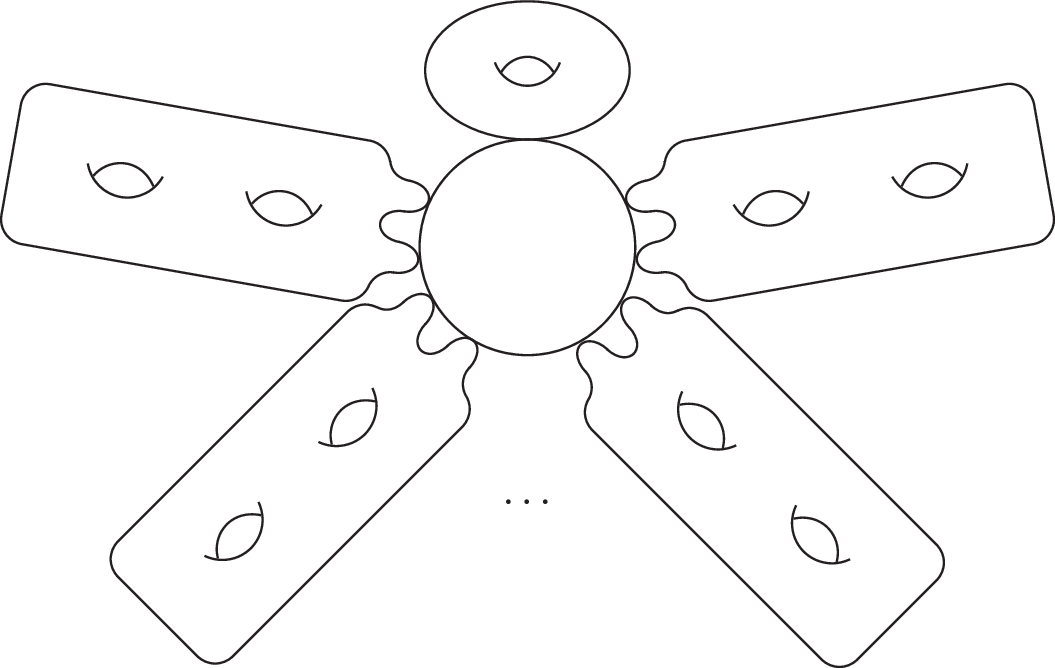}
\caption{Configuration of $((\phi_{n,1})_{\ep_n})^{-1}(0).$}
\label{fig:perturbfiber1}
\end{figure}

\subsection{Critical points and singular values} \label{sec:crit,sing}
By the definition of $(\phi_{n,1})_{\ep}$, critical points occur only in the $U$-chart.
For simplicity, we denote $(\phi_{n,1})_{\ep} |_U$ and $f(x,y)^n$ by $\phi_n$ and $f^n$ in this section, respectively.
Also, put $\ep=\ep_n$, where $\ep_n=(10n)^{-1}$.		 
\begin{prop}
There are $6n$ critical points in $U$ away from the zero-fiber, and all critical points are of the form $(0,y)$.
For $n=3k$, every three critical points $(0,y_{i,j})$, $j=0,1,2$, lie on a single singular fiber over $t_i$, for $i=0,\dots, 2n-1$.
Approximated critical points $(0,y_{i,j})$ and singular values $t_i$ are given by
\begin{align}\label{eq:y_i,j, 1}
y_{i,j}=
\begin{cases} z_6^{2j+1} (1+\frac{c_n}{3} z_{4n} z_{2n}^i {\ep_n}) & (k \text{ odd}) \\ 
z_6^{2j+1} (1+\frac{c_n}{3} z_{2n}^i {\ep_n}) & (k \text{ even})
\end{cases}
\end{align}
and
\begin{align} \label{eq:t_i, 1}
t_{i}=
\begin{cases} - \frac{2nc_n}{2n+1}  z_{4n}z_{2n}^i {\ep_n^{2n+1}} & (k \text{ odd})\\  
-\frac{2nc_n}{2n+1}  z_{2n}^i {\ep_n^{2n+1}}, & (k \text{ even})
\end{cases} 
\end{align}
where \[c_n=\frac1{\sqrt[2n]{2n+1}}.\]

For $n \neq 3k$, every critical point lies on distinct singular fibers, whose approximated expressions for the critical points
$(0,y_i)$ and singular values $t_i$, $i=0,\dots, 6n-1$, are given by
\begin{align}
y_i=
\begin{cases}
z_6^{2j(i)+1}(1+\frac{c_n}{3} z_{12n} z_{6n}^i \ep_n )& (n \text{ odd}) \\
z_6^{2j(i)+1}(1+\frac{c_n}{3} z_{6n}^i \ep_n), & (n \text{ even}) 
\end{cases}
\end{align}
where $j(i)\in\{0,1,2\}$ is the unique solution of the modular equation
\begin{align} 
2nj \equiv \begin{cases} i+m+1 \pmod 3 & \quad \text{if}  \quad n=2m+1\\
 i+m \pmod 3 & \quad \text{if}  \quad n=2m, \end{cases}
\end{align}
and
\begin{align}\label{eq:t_i, 2}
t_{i}=\begin{cases} - \frac{2nc_n}{2n+1}  z_{12n}z_{6n}^i {\ep_n^{2n+1}} & (n \text{ odd})\\ 
 -\frac{2nc_n}{2n+1}  z_{6n}^i {\ep_n^{2n+1}}. & (n \text{ even})\end{cases}
\end{align}
\end{prop}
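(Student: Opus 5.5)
We work directly with $\phi_n=f\,(y^nf^{2n}-\ep^{2n})$ on $U$, where $f=x^2-y^3-1$, and write $\ep=\ep_n$. Since $\partial_x\phi_n=2x\,\partial_f\Phi$ with $\Phi(f,y)=y^nf^{2n+1}-\ep^{2n}f$, a critical point has either $x=0$ or $\partial_f\Phi=0$.

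\emph{Excluding $x\neq 0$.} If $\partial_f\Phi=0$, then $\partial_y\phi_n=\partial_y\Phi-3y^2\partial_f\Phi=\partial_y\Phi=ny^{n-1}f^{2n+1}$. This vanishes only when $y=0$ or $f=0$. If $f=0$, then $\partial_f\Phi=-\ep^{2n}\neq0$. If $y=0$, then again $\partial_f\Phi=-\ep^{2n}\neq0$. So every critical point has $x=0$, and there $f=-y^3-1$. The remaining condition $\partial_y\phi_n=0$ is then the one-variable equation
\[
\frac{d}{dy}\Big[-(y^3+1)\big(y^n(y^3+1)^{2n}-\ep^{2n}\big)\Big]=0 .
\]
Away from the zero fiber we have $f\neq0$ and $y\neq0$.

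\emph{Counting and locating the roots.} Set $w=y^3+1$. The equation becomes
\[
3y^2\big((2n+1)y^nw^{2n}-\ep^{2n}\big)+ny^{n-1}w^{2n+1}=0 .
\]
Since $|f|<r_n$, $w$ is small, so $y$ lies near a cube root of $-1$, namely $y\approx z_6^{2j+1}$. In the regime $|w|\sim\ep$, the dominant balance is $(2n+1)y^nw^{2n}=\ep^{2n}$, with the term $ny^{n-1}w^{2n+1}$ of relative size $O(\ep)$. Hence
\[
w^{2n}\approx \frac{\ep^{2n}}{(2n+1)y^n},
\]
which gives $w\approx c_n\ep\,\zeta\,y^{-1/2}$ for $\zeta$ a $2n$-th root of unity. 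Writing $y=z_6^{2j+1}(1+\eta)$, we get $w\approx 3\eta\,z_6^{3(2j+1)}=-3\eta$. This gives $2n$ solutions near each of the three cube roots, $6n$ in total, with $\eta\approx\frac{c_n}{3}\cdot(\text{root of unity})\cdot\ep$.

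To make this rigorous I would count zeros with Rouché on the annulus $|w|\in(\ep/2,2\ep)$ around each $z_6^{2j+1}$, using $\ep_n=(10n)^{-1}$ to bound the error terms. I would then check, by total degree versus the roots already accounted for near $w=0$ and $y=0$, that no other roots lie inside $U$. That also fixes the choice of $r_n$.

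\emph{Critical values and nondegeneracy.} Substituting back gives
\[
t=-w\big(y^nw^{2n}-\ep^{2n}\big)\approx -w\Big(\tfrac{1}{2n+1}-1\Big)\ep^{2n}=\tfrac{2n}{2n+1}\,w\,\ep^{2n}.
\]
This yields $t_i=-\frac{2nc_n}{2n+1}(\cdots)\ep^{2n+1}$ once the phase of $w$ is written out. Nondegeneracy follows from the Hessian at $(0,y)$. It is diagonal, since $\partial_x\partial_y\phi_n=2x(\cdot)=0$. The $xx$-entry is $2\partial_f\Phi\approx2\big((2n+1)y^nw^{2n}-\ep^{2n}\big)$. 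This is not automatically nonzero, because the leading-order balance is exactly the critical equation. Using that equation, however, $(2n+1)y^nw^{2n}-\ep^{2n}=-ny^{n-1}w^{2n+1}/(3y^2)=O(\ep^{2n+1})\neq0$. The $yy$-entry is nonzero because the roots of the one-variable equation are simple, as they come from distinct branches of the $2n$-th root.

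\emph{Main obstacle: the phase bookkeeping.} The hard step is tracking the phases. The phase of $w$ involves $y^{-n/2}=z_6^{-(2j+1)n/2}(1+O(\ep))$, and this combines with $\zeta\in\mu_{2n}$. When $3\mid n$, $z_6^{-(2j+1)n/2}$ is independent of $j$ up to a $2n$-th root of unity, so the three points over each $j$ share a critical value. This gives the $2n$ values $z_{2n}^i$, with the extra factor $z_{4n}$ when $k$ is odd, coming from $z_6^{3k}=(-1)^k$ and $\sqrt{-1}$ choices. When $3\nmid n$, the phases from different $j$ interleave to give all $6n$-th roots, which produces the factor $z_{12n}$ for odd $n$. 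I would resolve this by writing
\[
\arg w=\arg\!\big(-(2j+1)\pi n/6\big)+\pi\ell/n,
\]
reducing modulo $2\pi$, and matching the index $i$. This produces the stated congruence for $j(i)$ in the two parity cases $n=2m+1$ and $n=2m$, and also the choice of sign $-\eta$ versus $\eta$ in $y_i$.
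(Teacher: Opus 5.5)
Your plan follows the paper's overall route: reduce to $x=0$, use the dominant balance $(2n+1)y^nf^{2n}\approx\ep^{2n}$, linearize about the cube roots of $-1$, and justify by Rouch\'e as in the appendix. Where you differ is in how you find the roots. The paper eliminates $y$ globally. For $n=3k$ it substitutes $y^n=(-1)^k(f+1)^k$. For $3\nmid n$ it cubes \eqref{eq:criteq} to get an equation in $f$ alone, producing $18n$ candidates, and then uses Lemma~\ref{lem:j(i)} to keep one $j(i)$ per $i$.

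You instead work near each $z_6^{2j+1}$ separately. Done exactly, with $w=y^3+1$ and $y=z_6^{2j+1}(1-w)^{1/3}$, your equation becomes
\[
w^{2n}=\frac{3\ep^{2n}(1-w)^{1-n/3}}{z_6^{(2j+1)n}\bigl((6n+3)-(7n+3)w\bigr)} .
\]
This is the appendix's equation $x^{2n}=g(x)$ (with $x=-w$), up to the unimodular constant $z_6^{-(2j+1)n}$. So your organization gives $2n$ roots per cube root directly, avoids the cube-and-discard step, and makes the Rouch\'e argument work for every $n$, not only $n=3k$. Your exclusion of $x\neq 0$ via $\Phi(f,y)$ and your Hessian check are also tidier than what the paper writes.

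There are, however, two points to fix.

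\emph{The phase formula in your main step is wrong.} You say the phase of $w$ comes from $y^{-n/2}$ and propose $\arg w=-(2j+1)\pi n/6+\pi\ell/n$. This contradicts your own, correct, $w\approx c_n\ep\zeta y^{-1/2}$. The right phase is $\arg w\equiv-(2j+1)\pi/6+\pi\ell/n$, which you can read off from the constant $z_6^{-(2j+1)n}$ above. The slip matters. For $n=2$, $j=0$, your formula gives the directions $-\pi/3+\tfrac{\pi}{2}\Z$ instead of $-\pi/6+\tfrac{\pi}{2}\Z$, so matching indices would give a wrong $j(i)$. With the corrected phase and $f=-w$, matching $\arg f$ to $z_{6n}^i$ (resp.\ $z_{12n}z_{6n}^i$) yields exactly $2nj\equiv i+m$ (resp.\ $i+m+1$) $\pmod 3$. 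For $n=3k$ it yields $z_{2n}^i$ or $z_{4n}z_{2n}^i$ according to the parity of $k$.

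\emph{For $n=3k$, matching phases only gives approximate equality.} Agreement of leading-order phases shows that the three critical values for $j=0,1,2$ are approximately equal. The statement says they lie on one fiber, which needs exact equality. You can get it from your exact equation. When $3\mid n$, the constant $z_6^{-(2j+1)n}=(-1)^k$ does not depend on $j$, and neither does $t=-w(y^nw^{2n}-\ep^{2n})$ as a function of $w$. Equivalently, $(x,y)\mapsto(x,z_3y)$ preserves $\phi_n$. This is what the paper's substitution $y^n=(-1)^k(f+1)^k$ encodes.

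Conversely, for $3\nmid n$ you should say explicitly why the $6n$ critical values are distinct. The relative error in $w$ is of order $\ep_n$, which is small compared with the angular spacing $\pi/(3n)$ of the approximate critical values.

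A minor point: the global degree count you describe misses three roots near $y^3=-n/(7n+3)$, which are neither near $w=0$ nor near $y=0$. They lie outside $U$, and no global count is needed anyway. The condition $|w|<r_n$ already confines $y$ to three small disks about the cube roots of $-1$, and $w$ is a coordinate on each of them.
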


\begin{figure}[h]
\includegraphics[scale=0.6]{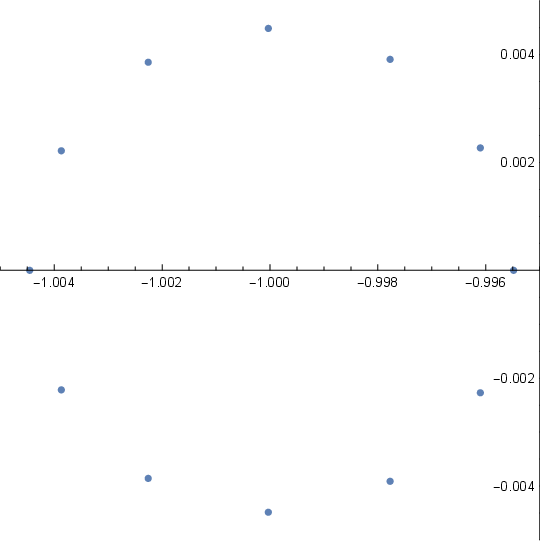}
\includegraphics[scale=0.6]{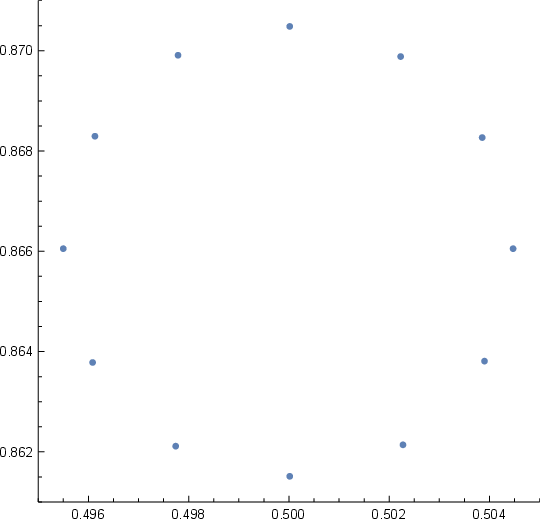}
\includegraphics[scale=0.6]{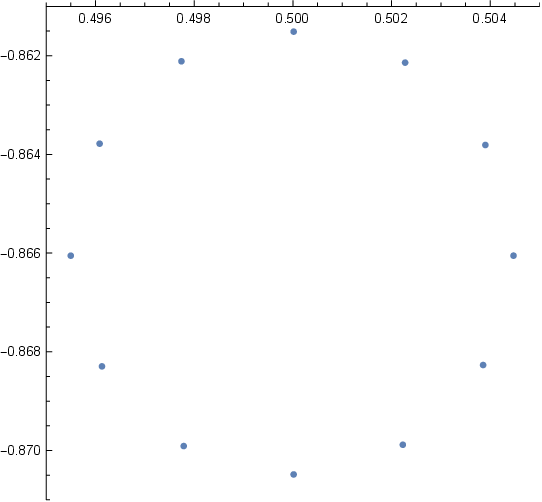}
\caption{Critical points $y_{i,1}$ (top-left), $y_{i,0}$ (top-right), and $y_{i,2}$ (bottom), for $n=6$}
\label{fig:critsing6}
\end{figure}

\begin{figure}[ht]
\includegraphics[scale=0.6]{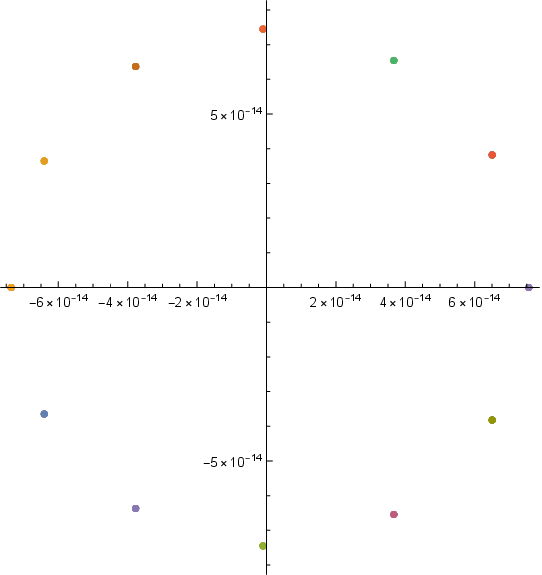}
\includegraphics[scale=0.6]{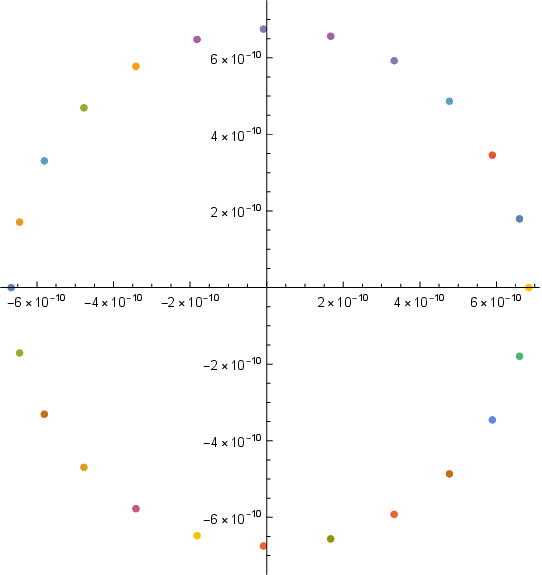}
\caption{Singular values $t_i$ for $n=6$ (left) and $n=4$ (right)}
\label{fig:critsing9}
\end{figure}

\begin{proof} 
To find critical points, we set $\frac{\partial \phi_n}{\partial x}=\frac{\partial \phi_n}{\partial y}=0$. Since
\begin{align*}
\frac{\partial \phi_n}{\partial x} &= 2x((2n+1)y^n f^{2n} -{\ep_n^{2n}}), \\
\frac{\partial \phi_n}{\partial y} &= n y^{n-1} f^{2n+1} + (-3y^2)(2n+1)y^n f^{2n}+3y^2 {\ep_n^{2n}} \\
&=y^{n-1}f^{2n}(nx^2-(7n+3)y^3-n)+3y^2 {\ep_n^{2n}},
\end{align*}
if $x\neq0$, then $y^n f^{2n}={\ep_n^{2n}}/(2n+1)$. Substituting this into $\frac{\partial \phi_n}{\partial y}$, we obtain
\begin{align*}
=&\frac{{\ep_n^{2n}}}{(2n+1)y}(nx^2 -(7n+3)y^3 -n)+3y^2 {\ep_n^{2n}} \\
=&\frac{{\ep_n^{2n}}}{(2n+1)y}(nx^2-(7n+3)y^3 -n +(6n+3)y^3) \\
=&\frac{n{\ep_n^{2n}}}{(2n+1)y}f=0,
\end{align*}
which implies $f=0$, contradicting with the assumption that $(2n+1)y^n f^{2n}={\ep_n^{2n}}\neq 0$.

Thus, all critical points are of the form $(0,y)$, where $y$ is a solution to the equation
\begin{align} \label{eq:criteq'}
y^{n-1}f^{2n}((7n+3)y^3+n)=3y^2 {\ep_n^{2n}},
\end{align}
or equivalently, when $y\neq 0$
\begin{align} \label{eq:criteq}
f^{2n}=\frac{3{\ep_n^{2n}}y^3}{y^n ((7n+3)y^3+n)}=\frac{3{\ep_n^{2n}}(f+1)}{y^n ((7n+3)f+6n+3)}.
\end{align}

Note that the equation \eqref{eq:criteq} has $7n$ solutions, but $n$ of these are located near the origin.
These solutions are discarded because the domain of $U$ is defined such that $|-y^3-1|$ is sufficiently small.
Furthermore, all roots of \eqref{eq:criteq} away from the origin are simple roots.

To find the roots of \eqref{eq:criteq}, we divide our analysis into two cases $n=3k$ and $n\neq 3k$.
First, assume $n=3k$.
We can then write $y^n=y^{3k}=(-f-1)^k=(-1)^k(f+1)^k$.
Thus, we have 
\begin{align} \label{eq:f^2n}
f^{2n}=\frac{3{\ep_n^{2n}}(f+1)}{(-1)^k(f+1)^k ((7n+3)f+6n+3)}.
\end{align}
To solve the equation for $y$, we introduce linear approximations.
In this section, we focus on finding the approximated solutions;
we refer the reader to the appendix for the justification of error control.

Since $|f|$ is sufficiently small, we can approximate \eqref{eq:f^2n} as
\[ f^{2n} \approx (-1)^k \frac{1}{2n+1} {\ep_n^{2n}},\]
so that 
\begin{align} \label{eq:f_i, 1}
f\approx f_i = \begin{cases} c_n \cdot z_{4n}z_{2n}^i {\ep_n}  & (k \text{ odd}),\\  c_n \cdot z_{2n}^i {\ep_n} & (k \text{ even}),\end{cases}
\end{align}
where $z_m=\exp(2\pi i /m)$ and
\[c_n=\sqrt[2n]{1/2n+1}.\]
Since $f=-1-y^3$, using the approximation $(1+x)^{1/3}\approx 1+1/3\cdot x$, 
we have the expression of $y_{i,j}$ and $t_i=\phi_n(0,y_{i,j})$ given in \eqref{eq:y_i,j, 1} and \eqref{eq:t_i, 1}.

Now assume $n\neq 3k$.
Cubing both sides of Equation \eqref{eq:criteq} yields
\begin{align}\label{eq:criteq,1}
f^{6n}=\frac{3^3{\ep_n^{6n}}(f+1)^3}{y^{3n} ((7n+3)f+6n+3)^3}=\frac{3^3{\ep_n^{6n}}(f+1)^3}{(-1)^n(f+1)^n ((7n+3)f+6n+3)^3},
\end{align}
so the solution $f$ of \eqref{eq:criteq,1} is given by
\begin{align}\label{eq:f_i, 2}
f\approx f_i= \begin{cases} c_n\cdot z_{12n} z_{6n}^i {\ep_n} & (n \text{ odd}) \\ c_n \cdot z_{6n}^i {\ep_n} & (n \text{ even}) \end{cases}
\end{align}
for $0\le i \le 6n-1$,
and similarly obtain $y_{i,j}$.
But, by Lemma~\ref{lem:j(i)}, only $y_{i,j(i)}=:y_i$ are the true solution of ~\eqref{eq:criteq}.
Therefore, each critical point $(0,y_i)$ lies on distinct singular fiber over $t_i$.
\end{proof}

\begin{lem}\label{lem:j(i)}
There exists a unique function $j: \{0,\dots, 6n-1 \} \arrowr \{0,1,2 \}$ such that $y_{i,j(i)}$ is a root of \eqref{eq:criteq}.
\end{lem}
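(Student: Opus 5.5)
We read Lemma~\ref{lem:j(i)} in the case $n\neq 3k$ as follows. For each $i$, exactly one of the three points $y_{i,0},y_{i,1},y_{i,2}$ satisfies \eqref{eq:criteq}; the other two satisfy only the cubed equation \eqref{eq:criteq,1}. The plan is to track the phase that cubing discards.

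\textbf{Step 1: the obstruction is a cube root of unity.} Write $R(y)$ for the right-hand side of \eqref{eq:criteq}, so \eqref{eq:criteq} reads $f^{2n}=R(y)$. Here $y^3=-(1+f)$, so $y^n$ is determined by $f$ only up to a cube root of unity. Hence any root $y$ of \eqref{eq:criteq,1} satisfies $f^{2n}=\omega\,R(y)$ for some $\omega\in\{1,z_3,z_3^2\}$. As functions of $f$, the three lifts $y=z_6^{2j+1}(1+f)^{1/3}$ (with the principal cube root, $|f|$ small) give three values $\omega_j$ with $\omega_{j+1}=\omega_j\, z_3^{\,n}$. Since $3\nmid n$, these three values are distinct. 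So for each fixed $f$-root, at most one lift satisfies \eqref{eq:criteq}, and it suffices to identify which one.

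\textbf{Step 2: leading-order computation of $\omega_j$.} Substitute $f=f_i$ from \eqref{eq:f_i, 2} and $y^n\approx z_6^{(2j+1)n}$ into \eqref{eq:criteq}. To leading order it becomes
\[
f_i^{2n}\approx \frac{\ep_n^{2n}}{(2n+1)\,z_6^{(2j+1)n}}.
\]
Write $f_i=c_n\zeta\ep_n$. Then $f_i^{2n}=\zeta^{2n}\ep_n^{2n}/(2n+1)$, so the condition is $\zeta^{2n}z_6^{(2j+1)n}=1$.

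For $n=2m+1$ we have $\zeta^{2n}=z_6^{1+2i}$. The condition becomes $1+2i+(2j+1)n\equiv 0\pmod 6$, which reduces to $nj\equiv-(i+m+1)\pmod 3$, that is, $2nj\equiv i+m+1\pmod 3$.

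For $n=2m$ we have $\zeta^{2n}=z_6^{2i}$. The condition reduces to $2nj\equiv i+m\pmod 3$.

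In both cases $n$ is invertible mod $3$, so there is a unique solution $j(i)$, matching the stated modular equation.

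\textbf{Step 3: making the approximation rigorous.} This is the main obstacle. I would proceed as follows.

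First, use the error control from the appendix (a Rouché-type argument on the disk $|f-f_i|\ll\ep_n$) to show the following. Each approximate root $f_i$ is within $O(\ep_n^2)$ of exactly one genuine root $f$ of \eqref{eq:criteq,1}, and these $6n$ roots are simple.

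Second, at the genuine root, the computed quantity $f^{2n}/R(y_{i,j})$ is exactly a cube root of unity. It differs from its leading-order value $\zeta^{2n}z_6^{(2j+1)n}$ by a relative error of size $O(\ep_n)$, with $\ep_n=(10n)^{-1}$. Distinct cube roots of unity are at distance $\sqrt3$ apart, so the exact value must coincide with the leading-order one. Hence $\omega=1$ precisely when $j=j(i)$.

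This gives existence and uniqueness of $j(i)$. Some care is needed to show the relative error really is $O(\ep_n)$ uniformly in $n$. The factors $(1+f)^{n/3}$ and $(7n+3)f+6n+3$ each contribute terms like $n|f|\approx n c_n\ep_n$, which stay bounded by a small constant because $\ep_n=(10n)^{-1}$. I expect this uniformity, rather than the combinatorics, to be the delicate part.

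Finally, count roots as a consistency check. There are $6n$ values $i$, each yielding exactly one genuine root of \eqref{eq:criteq}. This accounts for all $7n-n=6n$ roots of \eqref{eq:criteq} away from the origin, so no roots are missed.
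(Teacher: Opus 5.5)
Your Step~2 is exactly the paper's proof. The paper substitutes $y_{i,j}$ into \eqref{eq:criteq}, compares the leading-order phases $z_6z_3^{i}$ and $z_6z_3^{2m+2nj+2}$, and uses $3\nmid n$ to solve $2nj\equiv i+m+1 \pmod 3$; the proof of the lemma contains nothing beyond this. Your Step~1 is a correct addition that the paper lacks. For an exact root $f$ of \eqref{eq:criteq,1}, the three exact lifts $z_6^{2j+1}(1+f)^{1/3}$ make $\omega_j$ run through all three cube roots of unity. So exactly one lift solves \eqref{eq:criteq}, with no approximation involved, and only the identification of that $j$ remains.

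Step~3, however, misstates the error, although the argument can be repaired.
\begin{itemize}
\item The quotient $f^{2n}/R$ is exactly a cube root of unity only when $R$ is evaluated at the exact lift, not at $y_{i,j}$.
\item Its relative error against $\zeta^{2n}z_6^{(2j+1)n}$ is not $O(\ep_n)$. The main contribution is $(f/f_i)^{2n}$, which you do not list. An absolute error $|f-f_i|=O(\ep_n^2)$ with $|f_i|=c_n\ep_n$ becomes, after the $2n$-th power, a relative error of order $n\ep_n=1/10$. That is a fixed constant, not a decaying one; the factor $(1+f)^{n/3}$ behaves the same way.
\item Since both quantities are cube roots of unity, any relative error below $\sqrt3$ suffices. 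So your argument survives, but only once explicit constants are tracked.
\item Likewise, ``within $O(\ep_n^2)$ of exactly one genuine root'' needs an explicit constant. The $6n$ points $f_i$ are only about $2\pi c_n\ep_n/(6n)\approx 10c_n\ep_n^2$ apart.
\end{itemize}

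You can avoid all of these estimates with a branch argument.
\begin{itemize}
\item Let $g$ be the right-hand side of \eqref{eq:criteq,1} as a function of $f$. Adapting Lemma~\ref{lem:app1} gives branches $h_i$ of $g^{1/6n}$ on $|f|\le r_n$ with $h_i(0)=f_i$. Take the fixed point of $h_i$ as the $i$-th root.
\item At that root, $f^{2n}=h_i(f)^{2n}$.
\item Both $h_i^{2n}$ and $f\mapsto R(z_6^{2j+1}(1+f)^{1/3})$ are holomorphic cube roots of the nonvanishing function $g$ on the disk. Hence their quotient is a constant cube root of unity.
\item That constant equals its value at $f=0$, namely $\zeta^{2n}z_6^{(2j+1)n}$.
\end{itemize}
This makes your Step~2 congruence exact. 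It also labels the roots canonically, so no spacing comparison is needed.
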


\begin{proof}
Assume $n=2m+1$.
Substituting $y_{i,j}$ into \eqref{eq:criteq}, the left-hand side $f^{2n}$ becomes 
\[1/(2n+1){\ep_n^{2n}} \cdot z_6 z_3 ^{i}.\]
Meanwhile, the right hand side can be approximated as
\begin{align*}
&\frac{3 {\ep_n^{2n}} (1+ f_i)}{z_6^n z_3^{nj} (1+1/3 f_i)^n ((7n+3)f_i +(6n+3))} \\
\approx& \frac{{\ep_n^{2n}}}{(2n+1) z_6^{2m+1} z_3^{nj}} \\
=&\frac{1}{2n+1} {\ep_n^{2n}} \cdot z_6^5 z_3^{2m+2nj} \\
=&\frac{1}{2n+1} {\ep_n^{2n}} \cdot z_6 z_3^{2m+2nj+2}.
\end{align*}
Equating this with $f^{2n}$, we obtain the congruence
\[i \equiv 2m+2nj+2 \pmod 3,\]
which simplifies to
\begin{align} \label{eq:mod3eq}
2nj \equiv i+m+1 \pmod 3.
\end{align}
Since $n$ is not a multiple of $3$, there is a unique solution $j(i)$ to \eqref{eq:mod3eq}.
The case $n=2m$ is similar.
\end{proof}

\subsection{Vanishing cycles, $n=3k$ case} \label{sec:vc_3k}
In this subsection, we assume $n=3k$. 
Define $\Phi:\C_{(x,y)}^2 \arrowr \C_{(X,y)}^2$ by 
\[\Phi(x,y)=(x^2,y)=:(X,y).\]
Then $\phi_n$ is transformed into the map $\psi_n: \C^2_{(X,y)} \arrowr \C_t$ defined by
\[\psi_n(X,y):=(X-y^3-1)(y^n(X-y^3-1)^{2n}-{\ep_n^{2n}}),\]
and the restriction $\Phi_n^t: \phi_n^{-1}(t) \arrowr \psi_n^{-1}(t)$ of $\Phi$ is a hyperelliptic branched cover branched along $x=0$.
A generic fiber $\phi_n^{-1}(t)$ is homeomorphic to a once-punctured surface of genus $3n+1$.
Since the number of branch points of $\Phi_n^t$ is $6n+3$, the fiber $\psi_n^{-1}(t)$ is homeomorphic to a disk.

While the fiber $\psi_n^{-1}(t)$ is smooth for $t\neq 0$, 
the fiber $\psi_n^{-1}(t_{i})$ is locally written as $X=y^2$, tangent to the $y$-axis for each singular value $t_{i}$ of $\phi_n$.
Thus, as $t$ approaches $t_{i}$, two branch points of $\Phi$ merge into a point.
Therefore, to find vanishing cycles for $\phi_n$, we first fix a reference fiber $\phi_n^{-1}(t_O)$ and trace the trajectories of the branch points that merge as $t$ moves from $t_O$ to each singular value.

We choose $t_O={\ep_n^{4n}}$ sufficiently close to the origin so that the branch points of $\Phi_n^{t_O}$ are close to those of $\Phi_n^0$.
Note that the branch points of $\Phi_n^0$ are given by the solutions of the following equation
\[(-y^3-1)(y^n(-y^3-1)^{2n}-{\ep_n^{2n}})=0,\]
that is, \[y=y_j^0:=z_6^{2j+1},\] and
\begin{align}\label{eq:refpoints}
y \approx y_{i,j}^0:= \begin{cases} z_6^{2j+1}(1+\frac{1}{3}z_{4n}z_{2n}^i {\ep_n}) & (k \text{ odd}) \\
 z_6^{2j+1}(1+\frac{1}{3}z_{2n}^i {\ep_n})& (k \text{ even}). \end{cases}
\end{align}
Comparing this with \eqref{eq:y_i,j, 1}, we see that the coefficient $c_n/3$ is replaced by $1/3$.

We now construct a path system that induces the corresponding monodromy factorization.
Choosing the principal branch of the argument such that $\arg(z) \in \left[-\pi , \pi \right)$,
we define $\gamma_{i}(s)$ by
\begin{align} \label{eq:path_system}
\gamma_{i}(s)=
\begin{cases}
{\ep_n^{4n}} \exp{ 2s\arg(t_i) \sqrt{-1}}  & (0\le s\le 1/2) \\
((2s-1)|t_i|+ (2-2s) {\ep_n^{4n}})\exp{ \arg(t_i) \sqrt{-1}}& (1/2 \le s \le 1)
\end{cases}
\end{align}
Since ${\ep_n^{4n}}$ is chosen to be sufficiently close to 0, we can consider the branch points originating from $y_{i,j}^0$ and $y_j^0$ to be nearly stationary for $0\le s \le 1/2$ along the path $\gamma_{i}$ in $\C_t$.
Furthermore, the branch point starting from $y_{i,j}^0$ remains essentially stationary for $0 \le s \le 1$ along $\gamma_{i}$.
Therefore, we must identify another branch point that merges with $y_{i,j}^0$.
\begin{claim}\label{claim:branch_move}
For $n=3k$, the branch points $y_j^0$ and $y_{i,j}^0$ merge at $y_{i,j}$ for each $i,j$ as we move in the fibers over the curve ${\gamma}_{i}$.
\end{claim}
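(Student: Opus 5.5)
The plan is to reduce the claim to a one-variable problem near each of the three points $y_j^0=z_6^{2j+1}$, where the motion of the branch points can be tracked explicitly along a real ray. The branch points of $\Phi_n^t$ are the roots in $y$ of $\psi_n(0,y)=t$, that is,
\[
f\,(y^nf^{2n}-\ep_n^{2n})=t,\qquad f=-1-y^3 .
\]
Since $n=3k$, we have $y^n=(-1)^k(1+f)^k$, as in the proof of the preceding proposition. Near $y_j^0$ the quantity $w:=f(0,y)$ is a local holomorphic coordinate with $y\approx y_j^0(1+w/3)$, and the equation becomes
\[
(-1)^k w^{2n+1}(1+w)^k-\ep_n^{2n}w=t .
\]
I will rescale by $w=\ep_n W$ and $t=\ep_n^{2n+1}T$. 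The model equation, dropping the factor $(1+w)^k=1+O(\ep_n)$, is
\[
H(W):=(-1)^kW^{2n+1}-W=T .
\]
For $T=0$ its roots are $W=0$, which corresponds to $y_j^0$, and the $2n$ roots $\zeta_i$ with $\zeta_i^{2n}=(-1)^k$, which correspond to $y_{i,j}^0$ in \eqref{eq:refpoints}; here $\zeta_i=z_{4n}z_{2n}^i$ for $k$ odd and $\zeta_i=z_{2n}^i$ for $k$ even. The critical points of $H$ are $c_n\zeta_i$, and the critical values are $T_i=-\frac{2n}{2n+1}c_n\zeta_i$, matching \eqref{eq:y_i,j, 1} and \eqref{eq:t_i, 1}.

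The key observation is that the ray $\{r\zeta_i : r\ge 0\}$ is invariant under $H$. Because $(-1)^k\zeta_i^{2n}=1$, we get
\[
H(r\zeta_i)=\zeta_i\,(r^{2n+1}-r),
\]
so for $r\in[0,1]$ this is a nonpositive real multiple of $\zeta_i$, which is exactly the direction of $T_i$. On the radial part $1/2\le s\le 1$ of $\gamma_i$ we have $T=-\sigma\zeta_i$, where $\sigma$ increases from about $\ep_n^{2n-1}$ to $\frac{2n}{2n+1}c_n$. The equation restricted to the ray becomes $r-r^{2n+1}=\sigma$. For $0<\sigma<\frac{2n}{2n+1}c_n$ it has exactly two roots in $(0,1)$: one starts near $0$ and one starts near $1$. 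They move monotonically toward each other and coalesce at $r=c_n$ exactly when $\sigma$ reaches its maximum. Thus the branch point from $y_j^0$ and the one from $y_{i,j}^0$ merge at the point corresponding to $c_n\zeta_i$, namely $y_{i,j}$. Meanwhile the roots $\zeta_{i'}$ with $i'\ne i$ stay at distance bounded below, by simplicity of the roots of $H-T$ away from the critical value $T_i$.

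For the initial arc $0\le s\le 1/2$ I will check the following. We have $|T|=\ep_n^{2n-1}$, which is tiny, so by Rouché's theorem on disks of radius $O(\ep_n^{2n-1})$ all roots stay in small disks around $0$ and the $\zeta_i$. In particular the root near $0$, located at $W\approx -T$, ends up on the ray through $\zeta_i$, which is what makes the radial part start correctly. The roots near the other two points $y_{j'}^0$ are treated by the same local analysis and, by the previous proposition, do not merge over $t_i$ unless $j'$ is handled symmetrically; for the claim only $j$ itself matters.

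The main obstacle is justifying that the exact equation, including the factor $(1+w)^k$, the error in $y\approx y_j^0(1+w/3)$, and the actual $y^n$, does not destroy the real-ray picture. The plan is to treat the exact equation as $H(W)+\ep_n R(W)=T$ with $R$ holomorphic and bounded on $|W|\le 2$. I will apply Rouché's theorem on a small tubular neighbourhood of the segment $[0,1]\zeta_i$ to keep exactly two roots in it along the path and none elsewhere nearby. I will then use the implicit function theorem to follow each root continuously until $T$ is within $O(\ep_n)$ of $T_i$. Near $c_n\zeta_i$, the perturbed map has a unique nondegenerate critical point by the preceding proposition (the true $y_{i,j}$) with critical value $t_i$. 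The two tracked roots must therefore coalesce there, since any other coalescence would produce a second critical point in the disk. This is the error control deferred to the appendix, and I expect the bookkeeping with $\ep_n=(10n)^{-1}$ to be the most delicate part.
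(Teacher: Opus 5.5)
This is essentially the paper's argument. The paper follows the segment $y_{i,j}(s)=z_6^{2j+1}(1+s\frac{c_n}{3}z_{2n}^i\ep_n)$, which is your ray $W=r\zeta_i$ with $r=sc_n$, and checks that $\phi_n(0,y_{i,j}(s))\approx -sc_n\frac{2n+1-s^{2n}}{2n+1}z_{2n}^i\ep_n^{2n+1}$ runs radially from $0$ to $t_i$; this is precisely your identity $\sigma=r-r^{2n+1}$, and you additionally track the root coming from $y_{i,j}^0$ (the branch $r\in[c_n,1]$) and the initial arc of $\gamma_i$, both of which the paper leaves implicit. One correction to the error control you flag: $(1+w)^k-1\approx kw$ with $k\ep_n=1/30$, so the correction to $H$ has modulus about $\frac{1}{30}|W|^{2n+2}$, which is of size $1/30$ near $\zeta_i$ (moving that root by roughly $1/(60n)$, since $H'(\zeta_i)=2n$) rather than $\ep_n R(W)$ with $R$ uniformly bounded on $|W|\le 2$, so your Rouch\'e tubes must have width of order $1/n$, which is still compatible with the spacing of about $\pi/n$ between neighbouring roots.
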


\begin{proof}
Define a curve $y_{i,j}(s)$ by
\[y_{i,j}(s):=
\begin{cases} z_{6}^{2j+1}(1+ s \frac{c_n}{3}z_{4n}z_{2n}^i {\ep_n}) & (k \text{ odd}) \\
z_{6}^{2j+1}(1+ s \frac{c_n}{3} z_{2n}^i {\ep_n}) & (k \text{ even}) 
\end{cases}\]
so that $y_{i,j}(0)=y_j^0$ and $y_{i,j}(1)=y_{i,j}$.
We need to show that the image $\phi_n(0,y_{i,j}(s))$ is sufficiently close to the straight-line segment in $\C_t$.
For simplicity of notation, assume $k$ is even.
Since $-y_{i,j}(s)^3-1 \approx s \cdot c_n z_{2n}^i {\ep_n}$,
the value $\phi_n(0,y_{i,j}(s))$ is given by
\begin{align*}
\gamma_{i,j}(s)=\phi_n(0,y_{i,j}(s))
&\approx (s\cdot c_n z_{2n}^i {\ep_n}) \left( (1+s n \frac{c_n}{3} z_{2n}^i {\ep_n}) (s^{2n} \frac{1}{2n+1}{\ep_n^{2n}}-{\ep_n^{2n}})\right) \\
&=-s\cdot c_n  \left(  \frac{2n+(1-s^{2n})}{2n+1}\right)z_{2n}^i {\ep_n^{2n+1}}-s^2 n \frac{c_n^2}{3} \left(\frac{2n+(1-s^{2n})}{2n+1}\right) z_n^i {\ep_n^{2n+2}}\\
&\approx -s\cdot c_n  \left(  \frac{2n+(1-s^{2n})}{2n+1}\right)z_{2n}^i {\ep_n^{2n+1}}.
\end{align*}
Thus, the two paths $\gamma_{i,j}$ and ${{\gamma}}_{i}$ are homotopic relative endpoints in the complement of the other singular values.
\end{proof}

\begin{claim} \label{claim:subsurface}
For $0\le i \le k-1$ and $0 \le j \le 2$, 
the points $y^0_{i,j}$, $y^0_{i+k,j+2}$, $y^0_{i+2k,j+1}$, $y^0_{i+3k,j}$, $y^0_{i+4k,j+2}$, and $y^0_{i+5k,j+1}$
lie on the surface $yf^2=z_{n}^l \ep_n^2$ for some $l$.
\end{claim}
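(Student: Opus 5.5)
The key observation is that the statement is almost tautological once we work with the \emph{exact} branch points rather than their approximations \eqref{eq:refpoints}. I will first prove that every non-trivial branch point lies on some surface $yf^2=z_n^l\ep_n^2$. Then I will compute $l$ from the approximate formula and check that it is the same for the six listed points.

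\emph{Step 1 (every branch point lies on such a surface).} By definition, $y^0_{i,j}$ denotes the root near $z_6^{2j+1}$ of the second factor of $\psi_n(0,y)$ at $t=0$, namely
\[y^n f(0,y)^{2n}=\ep_n^{2n}.\]
Raising $yf^2$ to the $n$-th power gives $(yf^2)^n=y^nf^{2n}=\ep_n^{2n}$. Hence $yf^2$ evaluated at $y^0_{i,j}$ is an $n$-th root of $\ep_n^{2n}$, i.e.\ it equals $z_n^{l}\ep_n^2$ for a unique $l=l(i,j)\in\Z/n$. So the only content of the claim is that $l$ takes the same value at the six points listed.

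\emph{Step 2 (identify $l$).} Since the $n$-th roots of unity $z_n^l$ are separated by a distance of order $1$ (after dividing by $\ep_n^2$), the leading-order approximation \eqref{eq:refpoints} suffices to determine $l$. Write $w_i=z_{2n}^i$ if $k$ is even and $w_i=z_{4n}z_{2n}^i$ if $k$ is odd. Then $f\approx w_i\ep_n$ and $y\approx z_6^{2j+1}$, so
\[yf^2\approx z_6^{2j+1}w_i^2\ep_n^2 .\]
The error is $O(\ep_n^3)$, which is far smaller than the gaps $|z_n^l-z_n^{l'}|\,\ep_n^2$ when $\ep_n=(10n)^{-1}$; the appendix's error control is enough here. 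This pins down $l$.

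\emph{Step 3 (the exponent computation).} Here $n=3k$.
\begin{itemize}
\item If $k$ is even, then $z_6=z_n^{k/2}$ and $w_i^2=z_n^i$, so $l\equiv i+(2j+1)k/2 \pmod n$. Replacing $(i,j)$ by $(i+k,j+2)$, $(i+2k,j+1)$, $(i+3k,j)$, $(i+4k,j+2)$, $(i+5k,j+1)$ changes $l$ by $k+2k$, $2k+k$, $3k$, $4k+2k$, $5k+k$, respectively. Each of these is a multiple of $3k=n$.
\item If $k$ is odd, then $z_6=z_{2n}^{k}$ and $w_i^2=z_{2n}^{2i+1}$, so in units of $z_{2n}$ the exponent is $2i+1+(2j+1)k$. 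The same six shifts change it by $2k+4k$, $4k+2k$, $6k$, $8k+4k$, $10k+2k$, respectively, all multiples of $6k=2n$.
\end{itemize}
In both cases, note that $j$ is read mod $3$, and this is consistent because $z_6^{2(j+3)+1}=z_6^{2j+1}$. Thus all six points share the same $l$.

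The only delicate point is Step 2: justifying that the approximation determines $l$ exactly. I would handle it as just described, comparing the $O(\ep_n^3)$ error with the $\ep_n^2$-scale separation of the $n$-th roots of unity.
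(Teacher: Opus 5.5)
Your Step~3 is the paper's computation. The paper evaluates $yf^2$ at the leading-order points, obtains $z_n^{m_{i,j}}\ep_n^2$ with $m_{i,j}=i+kj+[\frac{k+1}{2}]$, and checks that the six index shifts change $m_{i,j}$ by multiples of $n$. Your Step~1 usefully makes explicit why the points lie \emph{exactly} on such a surface, which the paper leaves implicit.

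The flaw is in Step~2, the step you yourself flag as delicate. The $n$-th roots of unity are not separated by a distance of order $1$: $|z_n^l-z_n^{l'}|\ge 2\sin(\pi/n)\approx 2\pi/n=20\pi\ep_n$, because $\ep_n=(10n)^{-1}$. So the admissible values $z_n^l\ep_n^2$ are spaced about $20\pi\ep_n^3$ apart. That is the \emph{same} order as your $O(\ep_n^3)$ error, so ``far smaller'' does not follow from orders of magnitude. Citing the appendix does not close this either. It bounds the critical points $y_{i,j}$, not the branch points $y^0_{i,j}$, and it gives only $O(1/n^2)$ with no explicit constant. The conclusion survives only by a constant factor: a relative error of about $\ep_n$ against a half-gap $\sin(\pi/n)\approx 10\pi\ep_n$. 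Your argument does not track that constant.

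A cleaner repair avoids estimates entirely. Put $y=z_6^{2j+1}(1+\delta)$. Since $z_6^{3(2j+1)}=-1$, you get $f(0,y)=(1+\delta)^3-1$ and $yf^2=9z_6^{2j+1}\chi(\delta)^2$, where $\chi(\delta)=\delta(1+\delta)^{1/2}(1+\delta+\delta^2/3)=\delta+O(\delta^2)$ is a biholomorphism near $0$. Define $y^0_{i,j}:=z_6^{2j+1}\bigl(1+\chi^{-1}(\tfrac13 w_i\ep_n)\bigr)$ with your $w_i$. This agrees with the paper's leading-order formula up to $O(\ep_n^2)$, and it gives $y^0_{i,j}f^2=z_6^{2j+1}w_i^2\ep_n^2$ \emph{exactly}. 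Moreover $(z_6^{2j+1}w_i^2)^n=(-1)^{k(2j+1)}w_i^{2n}=1$ for both parities of $k$. Hence these $6n$ distinct points are precisely the roots of $y^nf^{2n}=\ep_n^{2n}$ near $y^3=-1$, and $l(i,j)$ is read off exactly from $z_n^{l}=z_6^{2j+1}w_i^2$. Your Step~3 then finishes the proof with no error analysis at all.
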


\begin{proof}
Note that $y_{i,j}^0 f(0,y_{i,j}^0)^2$ is evaluated as
\[y_{i,j}^0 f(0,y_{i,j}^0)^2 =\begin{cases} z_6^{2j+1}(1+ \frac{1}{3} z_{2n}^i {\ep_n})(z_n^i {\ep_n^{2}}) &\approx z_{n}^{i+kj+k/2} {\ep_n^{2}}\\
 z_6^{2j+1}(1+ \frac{1}{3} z_{4n}z_{2n}^i {\ep_n})(z_{2n}z_n^i {\ep_n^{2}}) &\approx z_{n}^{i+kj+(k+1)/2} {\ep_n^{2}}, \end{cases}\]
so the corresponding branch point lies on the surface $yf^2=z_n^{m_{i,j}} {\ep_n^{2}}$, where $m_{i,j}=i+kj+\left[  \frac{k+1}{2} \right]$.
Since $i+kj \equiv (k+i)+k(j+2) \equiv (k+2i)+k(j+1) \pmod{n=3k}$, they share the same value $m_{i,j}$ modulo $n$.
\end{proof}

To find vanishing cycles, we utilize another projection map $\pi_y: \psi^{-1}_n(t) \arrowr \C_y$.
This is generically a $(2n+1)$-fold cover, since $\psi_n(X,y)-t=0$ is a polynomial of degree $(2n+1)$ with respect to $X$.
To locate the ramification points of $\pi_y$, we must compute the discriminant $\Delta_y(t)$ of $\psi_n(X,y)-t=0$ with respect to $X$.

\begin{lem} \label{lem:discriminant_1}
The discriminant $\Delta_y(t)$ is given as
\begin{align}
\Delta_y(t)=(-1)^{\frac{n(n-1)}{2}} y^{2n^2-n}((2n)^{2n}{\ep_n^{2n(2n+1)}}-(2n+1)^{2n+1}y^n t^{2n}).
\end{align}
\end{lem}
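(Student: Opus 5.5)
The plan is to reduce to the discriminant of a trinomial. The discriminant of a polynomial in $X$ does not change under a translation $X\mapsto X+c$, even when $c$ depends on the parameter $y$, because it is a symmetric function of the differences of the roots times a power of the leading coefficient. So I will first substitute $W:=X-y^3-1$. Then
\[
\psi_n(X,y)-t=y^n W^{2n+1}-\ep_n^{2n}W-t,
\]
and $\Delta_y(t)$ equals the discriminant of this polynomial in $W$. This is a trinomial $aW^N+bW+c$ with
\[
N=2n+1,\quad a=y^n,\quad b=-\ep_n^{2n},\quad c=-t .
\]

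Next I will invoke the classical formula for the discriminant of a trinomial. I will derive it quickly to fix the sign convention. Use $\mathrm{Disc}(P)=(-1)^{N(N-1)/2}a^{-1}\mathrm{Res}(P,P')$ and $P'=NaW^{N-1}+b$. At each root $w$ of $P'$ we have $w^{N-1}=-b/(Na)$, so
\[
P(w)=\tfrac{N-1}{N}\,bw+c .
\]
Taking the product over the $N-1$ roots of $P'$ reduces the resultant to a norm of a linear expression. This gives
\[
\mathrm{Disc}(aW^N+bW+c)=(-1)^{N(N-1)/2}a^{N-2}\left(N^N a\,c^{N-1}+(-1)^{N-1}(N-1)^{N-1}b^N\right).
\]

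Substituting the values, and using that $N-1=2n$ is even, the bracket becomes
\[
(2n+1)^{2n+1}y^n t^{2n}-(2n)^{2n}\ep_n^{2n(2n+1)},
\]
since $(-\ep_n^{2n})^{2n+1}=-\ep_n^{2n(2n+1)}$ and $(-t)^{2n}=t^{2n}$. The prefactor $a^{N-2}=y^{n(2n-1)}=y^{2n^2-n}$ gives exactly the power of $y$ in the statement. Pulling the minus sign out of the bracket gives the form
\[
(2n)^{2n}\ep_n^{2n(2n+1)}-(2n+1)^{2n+1}y^nt^{2n}
\]
as stated.

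The only delicate point is the overall sign. The exponent $N(N-1)/2=n(2n+1)$, combined with the extra $-1$ from reordering the bracket, must be reconciled with the stated $(-1)^{n(n-1)/2}$, and this depends on the normalization of the discriminant (for instance whether a factor $a^{2N-2}$ and the resultant sign convention are included). I will fix the convention carefully when doing so. Since only the zero locus of $\Delta_y(t)$ is used afterwards, to locate the ramification points of $\pi_y$, the sign is immaterial for the subsequent argument. The zero locus is $y=0$ together with $y^n t^{2n}=(2n)^{2n}\ep_n^{2n(2n+1)}/(2n+1)^{2n+1}$, whatever the sign.
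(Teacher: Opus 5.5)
Your computation is correct, and it reaches the result by a different route than the paper. The paper computes $R(F,F')$ from the $(4n+1)\times(4n+1)$ Sylvester matrix. It subtracts $(2n+1)$ times each $F$-row from the matching $F'$-row, which makes the matrix block triangular with diagonal blocks $y^nI_{2n}$ and a $(2n+1)\times(2n+1)$ bidiagonal block with one corner entry, and then expands that block. You instead use $\mathrm{Res}(P,P')=(Na)^N\prod_{P'(w)=0}P(w)$, together with the observation that $P(w)=\frac{N-1}{N}bw+c$ is linear on the roots of $P'$; this is the classical trinomial discriminant. Your route is shorter, works for any trinomial, and keeps the sign bookkeeping transparent. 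The paper's route is elementary linear algebra with no appeal to the root-product formula. Both give the factor $y^{2n^2-n}$ and the same bracket up to sign.

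What you should change is the last paragraph: the sign is not a convention to be fixed later, and your formula already decides it. The factor $y^{2n^2-n}=a^{N-2}$ in the statement forces the standard normalization $a^{2N-2}\prod_{i<j}(r_i-r_j)^2$. This is exactly the normalization the paper invokes through $\Delta=(-1)^{N(N-1)/2}a^{-1}R(F,F')$, and with it the prefactor is $(-1)^{n(2n+1)}\cdot(-1)=(-1)^{n+1}$.

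This disagrees with the stated $(-1)^{n(n-1)/2}$ exactly when $n\equiv 0,3\pmod 4$. For example, for $n=3$ one gets $\Delta_y(t)=y^{15}\bigl(6^6\ep_3^{42}-7^7y^3t^6\bigr)$, with sign $+1$, not $-1$. The ratio of the two signs is $(-1)^{n(n+1)/2+1}$, which depends on $n$, so no global change of convention reconciles them.

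The paper's proof reaches $(-1)^{n(n-1)/2}$ through two slips that cancel only for $n\equiv 1,2\pmod 4$. First, it writes the sign factor with the paper's $n$ instead of the degree $2n+1$. Second, it gives the final block determinant the wrong overall sign; already for $n=1$ one has $R(F,F')=y^2(27yt^2-4b^3)$. So the correct conclusion of your argument is the lemma with $(-1)^{n+1}$ in place of $(-1)^{n(n-1)/2}$. As you note, only the zero set of $\Delta_y(t)$ is used afterwards, so nothing downstream changes.
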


\begin{proof}
Letting $a=-y^3-1$, we have $\psi_n(X,y)-t=(X+a)(y^n (X+a)^{2n}-{\ep_n^{2n}})-t$.
Since translation along the $X$-axis does not affect the discriminant, we can simply write 
\[F(X):=X(y^n X^{2n}-b)-t=y^n X^{2n+1}-bX-t=0,\]
where $b={\ep_n^{2n}}$.
Then $F'(X)=(2n+1)y^n X^{2n}-b$.
It is a well-known fact that the formula for the discriminant is given by 
\[\Delta_y(t)=(-1)^{\frac{n(n-1)}{2}} y^{-n} R(F,F'),\] 
where $R(F,F')$ is the resultant of $F$ and $F'$.
Recall that for polynomials $f(x)=\sum_{i=0}^n a_i x^i$ and $g(x)=\sum_{i=0}^m b_i x^i$, 
the resultant $R(f,g)$ is the determinant of the $(n+m)\times (n+m)$ Sylvester matrix:
\begin{align}
Syl(f,g)=
\begin{bmatrix} \label{eq:syl}
a_n & a_{n-1} &a_{n-2} & \cdots &a_{1}  & a_{0} & \cdots & 0 \\
0 & a_{n} &a_{n-1} & \cdots &a_{2}  & a_{1} & \cdots & 0 \\
\vdots & \vdots &\ddots & \vdots & \vdots &\vdots & \ddots &\vdots \\
0&0&\cdots&a_n&a_{n-1}&\cdots&\cdots&a_0 \\
b_m&b_{m-1}&\cdots&b_1&b_0&0&\cdots &0 \\
0&b_{m}&\cdots&b_2&b_1&b_0&\cdots &0 \\
\vdots &\vdots &\ddots&\vdots &\vdots &\vdots &\ddots &\vdots \\
0&0&\cdots&\cdots&b_m&b_{m-1}&\cdots&b_0
\end{bmatrix}
\end{align}
Substituting $f=F, g=F'$ into \eqref{eq:syl} yields the $(4n+1)\times (4n+1)$ matrix
\begin{align*}
Syl(F,F')=
\begin{bmatrix}
y^n & 0 & \cdots & \cdots & -b  & -t & \cdots & 0 \\
0 & y^n &0 & \cdots & 0  & -b & \cdots & 0 \\
\vdots & \vdots &\ddots & \vdots & \vdots &\vdots & \ddots &\vdots \\
0&0&\cdots&y^n&0&\cdots&\cdots&-t \\
(2n+1)y^n&0&\cdots&0&-b&0&\cdots &0 \\
0&(2n+1)y^n&\cdots&0&0&-b&\cdots &0 \\
\vdots &\vdots &\ddots&\vdots &\vdots &\vdots &\ddots &\vdots \\
0&0&\cdots&\cdots&(2n+1)y^n&0&\cdots&-b
\end{bmatrix}.
\end{align*}
Applying elementary row operations, we obtain
\begin{align*}
|Syl(F,F')|&=
\begin{vmatrix}
y^n & 0 & \cdots & \cdots & -b  & -t & \cdots & 0 \\
0 & y^n &0 & \cdots & 0  & -b & \cdots & 0 \\
\vdots & \vdots &\ddots & \vdots & \vdots &\vdots & \ddots &\vdots \\
0&0&\cdots&y^n&0&\cdots&\cdots&-t \\
0&0&\cdots&0&2nb&(2n+1)t&\cdots &0 \\
\vdots &\vdots &\ddots&\vdots &\vdots &\vdots &\ddots &\vdots \\
0&0&\cdots&\cdots&0&\cdots &2nb&(2n+1)t \\
0&0&\cdots&\cdots&(2n+1)y^n&0 &\cdots & -b
\end{vmatrix}\\
&=y^{2n^2} 
\begin{vmatrix}
2nb & (2n+1)t &0 & \cdots & 0  & 0  \\
0&2nb &(2n+1)t&\cdots&\cdots&0 \\
\vdots &\vdots &\ddots&\vdots &\vdots &\vdots \\
0&0&\cdots&\cdots &2nb&(2n+1)t \\
(2n+1)y^n&0&\cdots&\cdots &0& b
\end{vmatrix} \\
&=y^{2n^2}((2n)^{2n}b^{2n+1}-(2n+1)^{2n+1}y^n t^{2n}),
\end{align*}
so the discriminant $\Delta_y(t)$ is given by
\begin{align*}
\Delta_y(t)=(-1)^{\frac{n(n-1)}{2}} y^{2n^2-n}((2n)^{2n}({\ep_n^{2n}})^{2n+1}-(2n+1)^{2n+1}y^n t^{2n}),
\end{align*}
which proves the claim.
\end{proof}

By the lemma above, $y=0$ is the solution to $\Delta_y(t)=0$. However, this root is discarded because $y^n$ is the leading coefficient.
Thus, the nontrivial solutions to $\Delta_y(t)=0$, which correspond to the branch points $b_l(t)$ of $\pi_y$, are given by
\[b_l(t)^n=\frac{(2n)^{2n} {\ep_n^{2n(2n+1)}}}{(2n+1)^{2n+1} t^{2n}},\]
or equivalently,
\begin{align}
b_l(t)=z_n^l C_n \frac{{\ep_n^{2(2n+1)}}}{t^2},
\end{align}
where \[C_n=\left(\frac{2n}{2n+1}\right)^2 \cdot \frac{1}{(2n+1)^{1/n}}.\]
Since $b_l(t)$ is a simple root of $\Delta_y(t)=0$, 
there is a unique ramification point $B_l$ $(l=0,\dots, n-1)$ with multiplicity $2$ corresponding to $y=b_l(t)$.
We have identified the branch points of $\pi_y$, but $\pi_y$ fails to be a branched covering at $y=0$.
To resolve this, we extend the map $\pi_y$ to be a branched covering.

\begin{lem}
The extension \[\hat{\pi}_y: \hat{f}_n^t \arrowr \C_y\] 
of the map $\pi_y$ is a $(2n+1)$-fold branched cover, whose branch points are $y=0$ and $b_l(t)$.
The ramification points over $y=0$ in $\hat{f}_n^t$ are $O_{k}$ ($k=0,\dots n-1$) with multiplicity $2$, and $O_n$ with multiplicity $1$. 
Therefore, $\hat{f}_n^t \cong \C$ by Proposition~\ref{prop:condition}.
\end{lem}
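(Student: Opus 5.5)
We study the behaviour of $\pi_y$ near $y=0$ and near $y=\infty$ directly from the equation $F(X)=y^nX^{2n+1}-bX-t=0$, with $b=\ep_n^{2n}$, obtained after the translation $X\mapsto X+a$ with $a=-y^3-1$ used in the proof of Lemma~\ref{lem:discriminant_1}. The plan has three steps. First, describe the points of $\psi_n^{-1}(t)$ lying over a small neighbourhood of $y=0$ and fill in the missing ones to obtain the compactification $\hat f_n^t$. Second, read off the local monodromy of $\hat\pi_y$ around $y=0$ and around each $b_l(t)$. Third, apply Riemann--Hurwitz, i.e.\ condition (ii) of Proposition~\ref{prop:condition}, to compute the genus and number of ends of $\hat f_n^t$.

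\textbf{Near $y=0$.} We use the Newton polygon of $F$ in $X$ with the valuation in $y$. The vertices are $(2n+1,n)$, $(1,0)$ and $(0,0)$.
\begin{itemize}
\item The edge from $(0,0)$ to $(1,0)$ gives one root $X\approx -t/b$. It stays bounded as $y\to 0$ and is a single unramified sheet.
\item The edge from $(1,0)$ to $(2n+1,n)$ has slope $n/(2n)=1/2$. It gives $2n$ roots with $X^{2n}\approx b/y^n$, so $X\approx \zeta\, b^{1/2n} y^{-1/2}$ with $\zeta^{2n}=1$.
\end{itemize}
These $2n$ roots escape to infinity. To add points over $y=0$, we substitute $y=w^2$. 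Then $X=\zeta b^{1/2n}w^{-1}(1+O(w))$, and the $2n$ roots come in $n$ pairs interchanged by $w\mapsto -w$: the pair $\zeta$, $-\zeta$ is swapped when $y$ loops once around $0$.

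So we adjoin, over $y=0$, $n$ points $O_0,\dots,O_{n-1}$, one for each class $\{\zeta,-\zeta\}$. In the local coordinate $w$ (equivalently $1/X$) each is a ramification point of index $2$. We also adjoin the point $O_n$ of the bounded sheet, of index $1$. The sheet count is $2\cdot n+1=2n+1$, which matches the degree. Holomorphy of $\hat\pi_y$ at the new points follows because $y=w^2$ is holomorphic in the chart $w$.

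\textbf{Near the $b_l(t)$ and at infinity.} Over each $b_l(t)$ there is exactly one simple double root of $F$. This holds because $b_l(t)$ is a simple zero of the $y$-part $(2n)^{2n}b^{2n+1}-(2n+1)^{2n+1}y^nt^{2n}$ of $\Delta_y(t)$. So each $b_l(t)$ contributes one ramification point $B_l$ of index $2$, as already noted.

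Over a large circle $|y|=R$, all $2n+1$ roots satisfy $X^{2n+1}\approx t/y^n$ up to lower-order terms. Equivalently, they solve $y^nX^{2n+1}=t+bX$ with $X\to 0$. Since $\gcd(n,2n+1)=1$, the monodromy at infinity is a single $(2n+1)$-cycle. Hence $\hat f_n^t$ has exactly one end, and we only need its genus.

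\textbf{Riemann--Hurwitz.} We compute on the compactification $\overline{f}$ over $\hat{\C}_y$, where $\infty$ is totally ramified.
\[
\chi(\overline f)=(2n+1)\cdot 2-\bigl[n\cdot 1\bigr]_{y=0}-\bigl[n\cdot 1\bigr]_{b_l}-\bigl[2n\bigr]_{\infty}=2 .
\]
So $\overline f\cong S^2$ and $\hat f_n^t=\overline f\setminus\{\text{one point}\}\cong\C$.

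In the paper's language this is the existence data of Proposition~\ref{prop:condition} with $h=0$. It is consistent with the fact that $f_n^t$ is a disk, being the quotient of a once-punctured genus $3n+1$ surface by the hyperelliptic involution with $6n+3$ branch points.

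\textbf{Main obstacle.} The delicate step is justifying the local structure over $y=0$: the claim that the $2n$ escaping roots pair up into exactly $n$ double points rather than, say, forming longer cycles. I would handle this with the explicit Puiseux parametrization $y=w^2$, $X=w^{-1}\xi$. This turns the equation into $\xi^{2n+1}-b\xi-tw=0$ after multiplying through by $w^{2n+1}$. Its roots near $\xi^{2n}=b$ are simple at $w=0$, so the implicit function theorem gives $n$ smooth branches in $w$, each a double cover of $y$.
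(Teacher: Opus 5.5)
Your argument is correct, but it takes a different route from the paper.

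\textbf{The paper's route.} The paper never analyses the plane curve near $y=0$ abstractly. It defines $\hat f_n^t$ as the $\Phi$-image of the fiber in $U\cup\bigcup_j U_{1,j}$ and composes the perturbed transition maps of the charts $U_{1,1},\dots,U_{1,n}$. On the fiber $\{\mathfrak{t}_n=t\}$ of the last chart, the coordinate $y$ becomes an explicit polynomial of degree $2n+1$ in $\mathfrak{s}_n$, with $n$ double roots $O_0,\dots,O_{n-1}$ and one simple root $O_n$. The conclusion $\hat f_n^t\cong\C$ is then quoted from Proposition~\ref{prop:condition}, that is, Riemann--Hurwitz.

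\textbf{Your route.} Your Newton polygon and Puiseux argument establishes the same local structure intrinsically: the substitution $y=w^2$, $X=w^{-1}\xi$, plus the implicit function theorem at the simple roots of $\xi^{2n}=b$. Your monodromy computation at $y=\infty$ and the Euler characteristic count then carry out the genus and connectedness check that the paper leaves implicit.

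\textbf{What the chart computation buys.} The completion is, by construction, the actual fiber of $(\phi_{n,1})_\ep$ modulo the involution. The point $O_n$ is the point $y=0$ of $U$, and the $O_k$ sit in the charts $U_{1,j}$. This is what is needed later to lift arcs in $\hat f_n^{t_O}$ to vanishing cycles of the Lefschetz fibration.

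\textbf{Connecting your version to the paper's.} Note that the chart transitions are birational, so the fiber in $U\cup\bigcup_j U_{1,j}$ is exactly your completion over $y=0$. The labelling also matches. On the branch indexed by $\{\pm\zeta\}$ one has $yf^2=\xi^2\to\zeta^2\ep_n^2$, so each $O_l$ corresponds to one of the surfaces $yf^2=z_n^l\ep_n^2$ used in the subsequent claims.

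\textbf{Two small slips.} First, $\xi^{2n+1}-b\xi-tw=0$ comes from multiplying by $w$, not by $w^{2n+1}$. Second, $O_n$ is not adjoined: the point $y=0$, $X=-t/b$ already lies on the affine curve, and only the $n$ points of index $2$ are new.
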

\begin{proof}
Define \[\hat{f}_n^t =\Phi(U\cup \cup_{j=1}^n U_{1,j})\cap \psi_n^{-1}(t).\]
The $y$-coordinate transforms into \[s=\mathfrak{s}_1^2(\mathfrak{s}_1\mathfrak{t}_1+{\ep_n^{2}})\] in $U_{1,1}=\{(\mathfrak{s}_1, \mathfrak{t}_1)\}$,
and inductively we obtain 
\[y=\mathfrak{s}_n^2 (\mathfrak{s}_n \mathfrak{t}_n +z_n^{n-1}{\ep_n^{2}}) \prod_{k=1}^{n-1} (\mathfrak{s}_n \mathfrak{t}_n -(z_n^{k-1}-z_n^{n-1}){\ep_n^{2}})^2.  \]
If we fix the fiber over $t_O$, then $\mathfrak{t}_n=t_O$, yielding
\[y=\mathfrak{s}_n^2 (\mathfrak{s}_n t_O +z_n^{n-1}{\ep_n^{2}}) \prod_{k=1}^{n-1} (\mathfrak{s}_n t_O -(z_n^{k-1}-z_n^{n-1}){\ep_n^{2}})^2. \]
If $y=0$, we find $n-1$ points \[O_{k-1}: \mathfrak{s}_n=(z_n^{k-1}-z_n^{n-1}){\ep_n^{2}}/t_O\] with multiplicity 2 ($k=1,\dots, n-1$),
\[O_{n-1}: \mathfrak{s}_n=0\] with multiplicity 2, 
and \[O_n: \mathfrak{s}_n=- z_n^{n-1} {\ep_n^{2}}/t_O\] with multiplicity 1. 
Note that the point $ O_n$ and $O_k$ ($k\neq n$) essentially correspond to $y=0$ in $U$ and $\mathfrak{s}_k=0$ in $U_{1,k}$, respectively.
Consequently, we can extend $\pi_y $ to the $(2n+1)$-fold branched cover \[\hat{\pi}_y: \hat{f}_n^{t_O} \arrowr \C_y,\] 
with ramification point $O_k$ $(k=0,\dots, n)$ over $y=0$.  
\end{proof}

We now lift the trajectory of the path $y_{i,j}(s)$ in $\C_y$ to $\hat{f}_n^t$.
Note that two points $y_{i,j}^0$ and $y_j^0$ lie on different sheets of $\pi_y$.
This implies that as we move along $\gamma_i$, the sheet containing $y_{i,j}(s)$ must change; in other words, 
the path intersects $b_l(\gamma_i(s))$ for some $s$. 
Furthermore, this $s$ must be close to $1$ so that the norm $|b_l(\gamma_i(s))|$ is close to $1$,
and $l$ will precisely match the value given in Claim~\ref{claim:subsurface}.

\begin{claim}
For each $i,j$, there is a unique integer $l=l(i,j)$ such that $b_l(\gamma_i(1))$ is sufficiently close to $y_{i,j}(1)$, where $l(i,j)$ is the function given in Claim~\ref{claim:subsurface}.
\end{claim}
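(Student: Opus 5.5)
The argument is a direct computation: evaluate $b_l(\gamma_i(1))=b_l(t_i)$ explicitly, compare it with $y_{i,j}(1)=y_{i,j}$, and show that exactly one $l$ gives a point at distance $O(\ep_n)$ from $y_{i,j}$, while every other $l$ gives a point at distance of order $1$. I treat $k$ even; for $k$ odd one inserts the extra factor $z_{4n}$ throughout.

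First I substitute \eqref{eq:t_i, 1} into the formula for $b_l(t)$. Since $t_i^2=\left(\frac{2nc_n}{2n+1}\right)^2 z_n^i\ep_n^{4n+2}$, the powers of $\ep_n$ cancel. Using $c_n^2=(2n+1)^{-1/n}$, the constants also cancel:
\[
C_n\left(\frac{2n+1}{2nc_n}\right)^2=\frac{(2n)^2}{(2n+1)^2}(2n+1)^{-1/n}\cdot\frac{(2n+1)^2}{(2n)^2}(2n+1)^{1/n}=1 .
\]
Hence $b_l(t_i)=z_n^{l-i}$ exactly, to leading order. One should also check that the lower-order correction terms in $t_i$ change $b_l(t_i)$ only by $O(\ep_n)$. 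This follows from the appendix-style error estimates already used for $y_{i,j}$ and $t_i$.

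On the other side, $y_{i,j}=z_6^{2j+1}(1+O(\ep_n))$. The branch points $b_l(t_i)$ lie, up to $O(\ep_n)$, on the $n$-th roots of unity, and consecutive ones are separated by $|1-z_n|\ge 2\sin(\pi/n)$. This separation is much larger than $\ep_n=(10n)^{-1}$. So $b_l(t_i)$ is close to $y_{i,j}$ precisely when $z_n^{l-i}=z_6^{2j+1}$. This equation has a solution because $n=3k$ is even or odd in a way compatible with $z_6^{2j+1}$ being an $n$-th root of unity. When $k$ is odd, the $z_{4n}$ factor in $t_i$ supplies exactly the half-step correction needed.

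Solving the congruence $l-i\equiv k(2j+1)/2 \pmod n$ (with the appropriate half-integer shift) gives $l\equiv i+kj+[\frac{k+1}{2}]\pmod n$, which is exactly $m_{i,j}$ from Claim~\ref{claim:subsurface}. Uniqueness follows from the separation estimate above, so $l(i,j)=m_{i,j}$.

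The main obstacle I expect is this congruence bookkeeping. The parities of $k$ and the sign conventions, such as the minus sign in $t_i$ and the factor $(-1)^k$, must be tracked carefully so that the computed $l$ matches $m_{i,j}$ on the nose rather than only up to a shift. I would check the two parities separately and verify the result against the explicit case $n=6$ shown in Figure~\ref{fig:critsing6}.
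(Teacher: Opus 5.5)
Your proposal is correct and follows the paper's proof essentially verbatim. The paper likewise computes $b_l(t_i)$, obtaining $z_n^{l-i}$ for $k$ even and $z_{2n}^{2l-2i-1}$ for $k$ odd, equates it with $z_6^{2j+1}$, and solves the congruence to get $l\equiv i+kj+[\frac{k+1}{2}]\pmod n$, which is $m_{i,j}$ from Claim~\ref{claim:subsurface}; your explicit check that $C_n\bigl(\frac{2n+1}{2nc_n}\bigr)^2=1$ and your root-of-unity separation argument for uniqueness just spell out steps the paper leaves implicit.
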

\begin{proof}
Since
\[b_l(t_{i})=\begin{cases} z_{2n}^{2l-(2i+1)}& (n \text{ odd}) \\ z_n^{l-i} & (n \text{ even}) \end{cases},\] 
there exists an $l=l(i,j)$ such that $z_{2n}^{2l-(2i+1)} =z_6^{2j+1} $ for odd $n $, and $z_n^{l-i}=z_6^{2j+1}$ for even $n$.
In particular, when $n=3k$ is odd, $z_6^{2j+1}=z_{6k}^{k(2j+1)}$, which gives $2l \equiv 2i+1+k(2j+1) \pmod{2n}$, 
so that $l \equiv i+jk+(k+1)/2 \pmod{n}$.
When $n=3k$ is even, $z_{3k}^{(k/2)(2j+1)}=z_{3k}^{l-i}$, so that $l \equiv i+jk+k/2 \pmod{n}$. 
Comparing this with the function provided in Claim ~\ref{claim:subsurface}, we verify that they yield the same values.
\end{proof}

Let $c_{i,j}$ denote the vanishing cycle in ${F}_n^{t_O}$ corresponding to $y_{i,j}$, that is, $c_{i,0}, c_{i,1}, c_{i,2}$ are the vanishing cycles corresponding to the singular value $t_i$.
Under the map $\Phi$, each $c_{i,j}$ is projected to a path $\tilde{c}_{i,j}$ in $f_n^{t_O}$, and in particular in $\hat{f}_n^{t_O}$, connecting $y_{i,j}$ and $y_j$.
Under the map $\hat{\pi}_y$ the path $\tilde{c}_{i,j}$ further projects to a curve $\tilde{\tilde{c}}_{i,j}$ in $\C_y$.
We know that $\tilde{\tilde{c}}_{i,j}$ is formed by tracking the branch point $b_l$
over the path $\overline{\gamma}$ (the reverse path of $\gamma_i$, from the singular value $t_i$ to $t_O$).
Thus, $\tilde{\tilde{c}}_{i,j}$ is isotopic to the concatenation: 
\[b_{l(i,j)}(\overline{\gamma}_i) * b_{l(i,j)}(\gamma_i)* y_{i,j}.\]
Here, $b_l(\gamma_i(s))$ moves in a counter-clockwise (clockwise, resp.) direction for $0\le s \le 1/2$ when $0\le i \le n-1$ ($n\le i \le 2n-1$, resp.), and moves radially for $1/2\le s \le 1$.
Figure~\ref{fig:move_branchpt} illustrates the movement of $b_{l(i,1)}(\overline{\gamma}_i)$. 
For $j=0$ ($2$, resp.), the curve $b_{l(i,j)}(\overline{\gamma}_i)$ is obtained by rotating the configuration in Figure~\ref{fig:move_branchpt} by $-2\pi/3$($2\pi/3$, resp.) in the positive direction.

\begin{figure}[ht]
\centering
\includegraphics{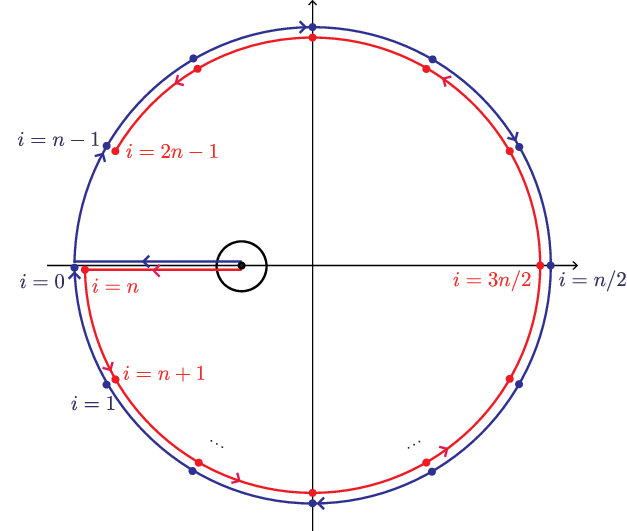}
\caption{Movement of $b_{l(i,1)}(\overline{\gamma}_i)$. The blue arrow represents the curve for $0\le i\le n-1$, and the red for $n \le i \le 2n-1$}
\label{fig:move_branchpt}
\end{figure}

Since the points $a_j=(0,y_j)$ lie in the same sheet of $\hat{\pi}_y$, and considering that each $b_{l(i,j)}(\overline{\gamma}_i)$ starts from near $a_j$, 
the branched cover structure of $\hat{\pi}_y:\hat{f}_n^{t_O} \arrowr \C_y$ is depicted in Figure~\ref{fig:branched_cover_n}.

\begin{figure}[ht]
\includegraphics[scale=0.6]{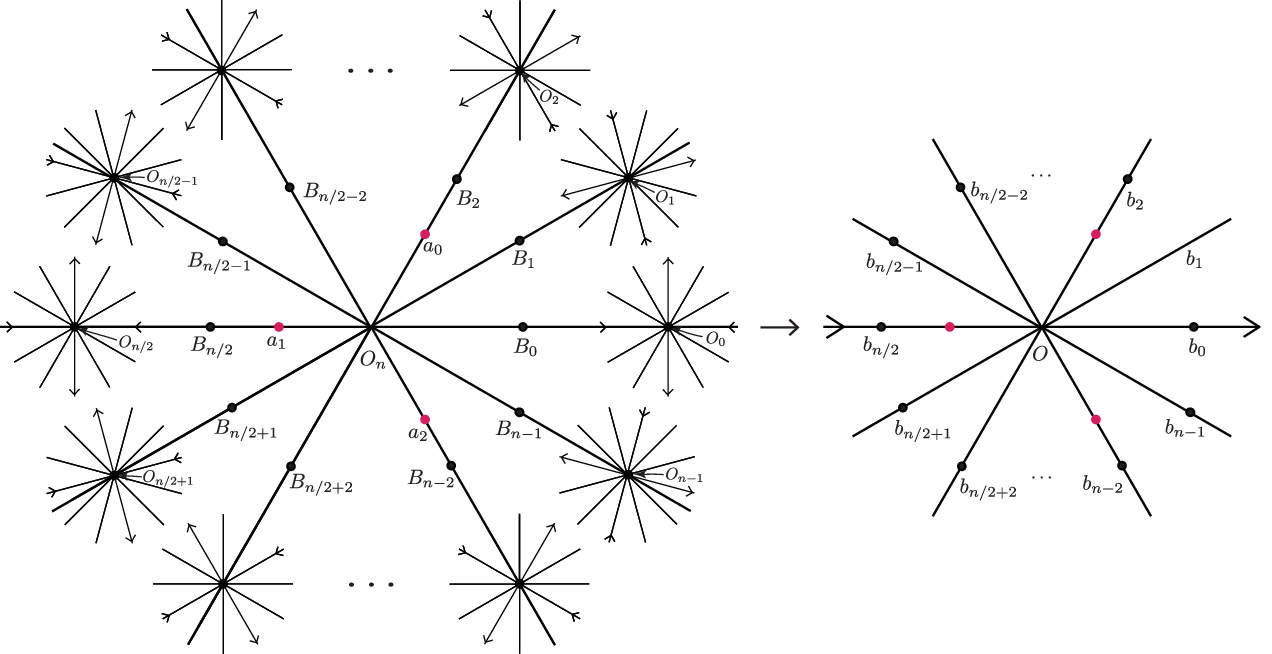}
\caption{Branched cover $\hat{\pi}_y: \hat{f}_n^{t_O} \arrowr \C_y$.}
\label{fig:branched_cover_n}
\end{figure}

Note that the curves $\tilde{c}_{i,j}$, $\tilde{c}_{i+k,j+2}$, $\tilde{c}_{i+2k,j+1}$, $\tilde{c}_{i+3k,j}$, $\tilde{c}_{i+4k,j+2}$, $\tilde{c}_{i+5k,j+1}$ 
are contained within a single disk containing 9 branch points. 
Therefore, they lift to vanishing cycles contained in a subsurface $\Sigma_{4,1}\subset \Sigma_{3n+1}$.
Also note that $l(0,j)$ is given by
\[l(0,j)=\begin{cases} \frac{1}{2}(1+\frac{n}{3}(2j+1))& (n \text{ odd}) \\ \frac{n}{6}(2j+1) & (n \text{ even})\end{cases}.\]
Thus, in Figure~\ref{fig:move_branchpt}, the branch point $b_{l(0,j)}(\overline{\gamma}_i)$ winds by $2\pi(2n-1)/(2n)$, or $2\pi$. 
As $i$ increases by 1, the rotation angle decreases by $2\pi/n$.
From this information, we can precisely draw the vanishing arcs $\tilde{c}_{i,j}$ in $\hat{f}_n^{t_O}$.

\begin{rmk}\label{lem:ppaction}
(Symmetry of curves $\tilde{c}_{i,j}$)

Let $\theta_{i,j}$ be the non-negative angle defined by \[\theta_{i,j}\equiv \left(\frac{2l(i,j)}{n}-\frac{2j+1}{3}\right)\pi \pmod{2\pi}.\]
For $0\le i \le n-1$ and $j=0,1,2$, the arc $\tilde{c}_{i,j}$ is isotopic to a simple curve starting at $a_{j}$, 
winding clockwise around $O_n$ by $2\pi- \theta_{i,j}$, passing through $B_{l(i,j)}$, 
winding counter-clockwise around $O_{l(i,j)}$ by $\pi-\theta_{i,j}/2$, and finally ending at $a_{i,j}$ (which maps to $y_{i,j}$ under $\hat{\pi}_y$).

For $n\le i \le 2n-1$, $j=0,1,2$, $\tilde{c}_{i,j}$ is isotopic to a simple curve starting at $a_{j}$, winding counter-clockwise around $O_n$ by $ \theta_{i,j}$ passing through $B_{l(i,j)}$, 
winding clockwise around $O_{l(i,j)}$ by $\theta_{i,j}/2$, and ending at $a_{i,j}$.

It is clear that a curve $\tilde{c}_{i,j}$ satisfying $l=l(i,j)$ passes through $B_l$.
For example, when $l=[\frac{n+1}{2}]$, since $l(0,j)=l$ implies $j=1$, the curves $\tilde{c}_{0,1}, \tilde{c}_{k,0}, \tilde{c}_{2k,2}, \tilde{c}_{3k,1}, \tilde{c}_{4k,0}$ and $\tilde{c}_{5k,2}$ all pass through $B_l$.
\end{rmk}

\begin{exam} \label{ex:n=3}
Let $n=3$. 
Then the vanishing arcs $\tilde{c}_{i,j}$ for $i=0,5$ and $j=0,1,2$ are drawn in the figures below.

\begin{figure}[ht]
\begin{center}
\includegraphics[scale=0.6]{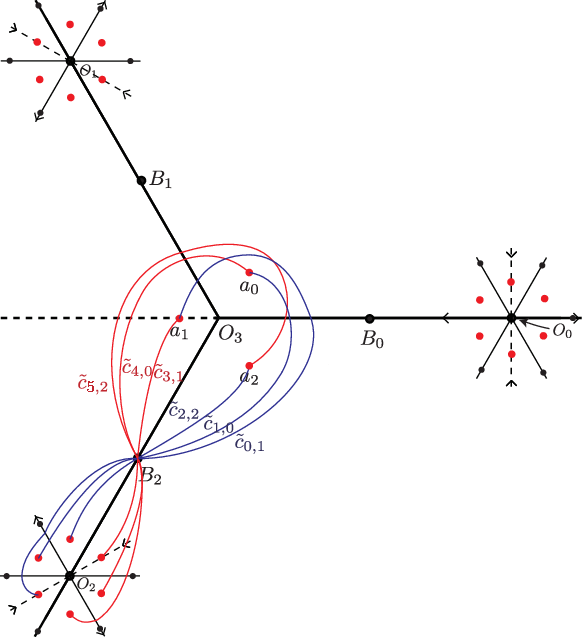}
\includegraphics[scale=0.6]{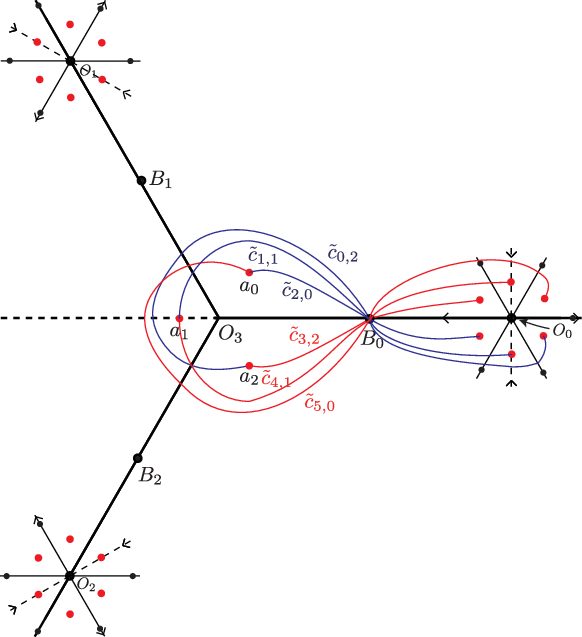}
\includegraphics[scale=0.6]{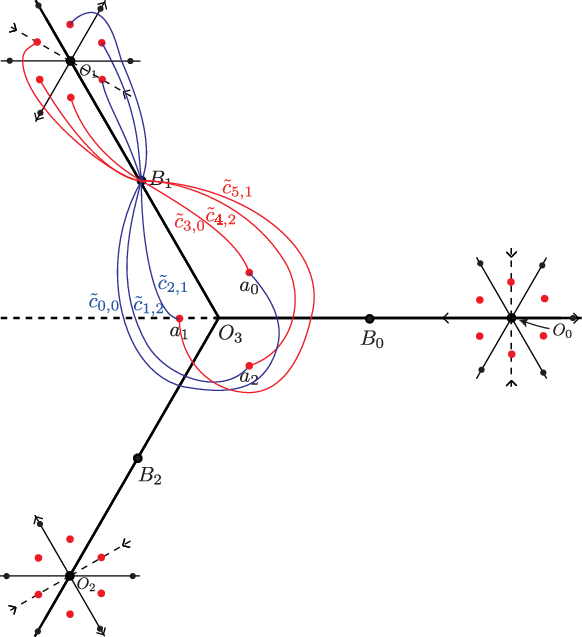}
\end{center}
\caption{Vanishing arcs $\tilde{c}_{i,j}$}
\end{figure}

To lift the arcs $\tilde{c}_{i,j}$ to vanishing cycles of the Lefschetz fibration, we align all red vertices along a single line.
We place the vertices near $O_2$ to the left of $O_3$, 
and we place the vertices near $O_0$ between $O_2$ and $O_3$ by moving them in the clockwise direction.
Similarly, we place the vertices near $O_1$ between $O_0$ and $O_3$ by moving them in the clockwise direction.
Finally, the vertices forming a circular arrangement around each $O_i$ are aligned in a line, drawn as the figure below.

\begin{figure}[ht]
\centering
\includegraphics[scale=1]{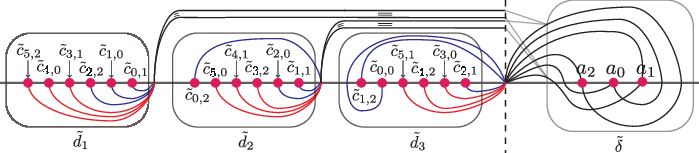}
\caption{$\tilde{c}_{i,j}$ after perturbation}
\end{figure}

For example, $a_2, a_0$, and $a_1$ are placed in a line from left to right, after moving $a_2, a_0$ in the clockwise direction.
Note that the monodromy around the origin $\Phi_O$ in the mapping class group $\mathcal{M}(D^2)$ is given by
\[t_{\tilde{d}_1} t_{\tilde{d}_2} t_{\tilde{d}_3} t_{\tilde{\delta}}^2,\]
which lifts to \[(t_{d_1}t_{d'_1})(t_{d_2}t_{d'_2})(t_{d_3}t_{d'_3})t_{\delta}\]
 in $\mathcal{M}(\Sigma_{10,1})$ via the hyperelliptic lift.
Also note that the monodromy $\Phi_i$ corresponding to the singular value $t_i$ is given by 
\[\Phi_i=t_{\tilde{c}_{i,0}}t_{\tilde{c}_{i,1}}t_{\tilde{c}_{i,2}},\]
where the arcs $\tilde{c}_{i,j}$ are mutually disjoint for a fixed $i$.
Thus, the monodromy factorization in $\mathcal{M}(D^2)$ is given by
\begin{align*}
&\Phi_O \cdot \Phi_5 \Phi_4\Phi_3\Phi_2\Phi_1\Phi_0 \\
&=\left(t_{\tilde{d}_1} t_{\tilde{d}_2} t_{\tilde{d}_3} t_{\tilde{\delta}}^2\right)\cdot  \prod_{i=5}^0 t_{\tilde{c}_{i,0}}t_{\tilde{c}_{i,1}}t_{\tilde{c}_{i,2}}.
\end{align*}
Since the arc $\tilde{c}_{1,2}$ is disjoint from $\tilde{c}_{0,j}$ for all $j\neq 2$,
we can commute the three words $t_{\tilde{c}_{1,2}}, t_{\tilde{c}_{0,2}},$ and $t_{\tilde{c}_{0,0}}$ to the front. 
For the remaining arcs, by commuting words appropriately, we can rewrite the word as
\begin{align*}
&\prod_{i=5}^0 t_{\tilde{c}_{i,0}}t_{\tilde{c}_{i,1}}t_{\tilde{c}_{i,2}} \\
=& (t_{\tilde{c}_{5,0}}t_{\tilde{c}_{5,1}}t_{\tilde{c}_{5,2}})(t_{\tilde{c}_{4,0}}t_{\tilde{c}_{4,1}}t_{\tilde{c}_{4,2}})  
(t_{\tilde{c}_{3,0}}t_{\tilde{c}_{3,1}}t_{\tilde{c}_{3,2}})(t_{\tilde{c}_{2,0}}t_{\tilde{c}_{2,1}}t_{\tilde{c}_{2,2}})
(t_{\tilde{c}_{1,0}}t_{\tilde{c}_{1,1}}t_{\tilde{c}_{1,2}})(t_{\tilde{c}_{0,0}}t_{\tilde{c}_{0,1}}t_{\tilde{c}_{0,2}})
\\
=& (t_{\tilde{c}_{5,2}}) (t_{\tilde{c}_{5,0}}t_{\tilde{c}_{4,0}}) ( t_{\tilde{c}_{5,1}}t_{\tilde{c}_{4,1}}t_{\tilde{c}_{3,1}})
( t_{\tilde{c}_{4,2}}t_{\tilde{c}_{3,2}}t_{\tilde{c}_{2,2}})( t_{\tilde{c}_{3,0}}t_{\tilde{c}_{2,0}}t_{\tilde{c}_{1,0}})
( t_{\tilde{c}_{2,1}}t_{\tilde{c}_{1,1}}t_{\tilde{c}_{0,1}}) ( t_{\tilde{c}_{1,2}}t_{\tilde{c}_{0,2}}t_{\tilde{c}_{0,0}})
\end{align*}
We now apply a Hurwitz move to the words $t_{\tilde{c}_{1,2}} t_{\tilde{c}_{0,2}}t_{\tilde{c}_{0,0}}$, which gives an equivalent Lefschetz fibration, so that they appear to the front of the factorization. 
Denote \[\tilde{c}'_{i,j}:= \Phi^{-1}_O (\tilde{c}_{i,j}).\]
Since $\tilde{c}'_{0,0}$ and $\tilde{c}_{5,2}$ are disjoint, their Dehn twists commute, yielding
\begin{align*}
&\Phi_O \cdot \prod_{i=5}^0 t_{\tilde{c}_{i,0}}t_{\tilde{c}_{i,1}}t_{\tilde{c}_{i,2}} \\
=& \Phi_O\cdot 
(t_{\tilde{c}'_{1,2}} t_{\tilde{c}'_{0,2}}t_{\tilde{c}_{5,2}}) (t_{\tilde{c}'_{0,0}} t_{\tilde{c}_{5,0}}t_{\tilde{c}_{4,0}}) ( t_{\tilde{c}_{5,1}}t_{\tilde{c}_{4,1}}t_{\tilde{c}_{3,1}})
( t_{\tilde{c}_{4,2}}t_{\tilde{c}_{3,2}}t_{\tilde{c}_{2,2}})( t_{\tilde{c}_{3,0}}t_{\tilde{c}_{2,0}}t_{\tilde{c}_{1,0}})
( t_{\tilde{c}_{2,1}}t_{\tilde{c}_{1,1}}t_{\tilde{c}_{0,1}})
\end{align*}
Note that the curves $\tilde{c}'_{0,0}$ and $\tilde{c}'_{1,2}$ are obtained from $\tilde{c}_{0,0}, \tilde{c}_{1,2}$ by applying the negative Dehn twist along $\tilde{d}_3$, 
and the curve $\tilde{c}'_{0,2}$ is obtained from $\tilde{c}_{0,2}$ by applying the negative Dehn twist along $\tilde{d}_2$.
As a result, the curves in each bracket exhibit a repetitive pattern:
the $i$-th curve in each bracket passes through $B_{i+1}$ (with indices taken modulo $3$).
After reindexing the curves, we obtain 
\begin{align*}
&
\Phi_O\cdot (t_{\tilde{c}_{3,6}}t_{\tilde{c}_{2,6}}t_{\tilde{c}_{1,6}})(t_{\tilde{c}_{3,5}}t_{\tilde{c}_{2,5}}t_{\tilde{c}_{1,5}})
(t_{\tilde{c}_{3,4}}t_{\tilde{c}_{2,4}}t_{\tilde{c}_{1,4}}) (t_{\tilde{c}_{3,3}}t_{\tilde{c}_{2,3}}t_{\tilde{c}_{1,3}})
(t_{\tilde{c}_{3,2}}t_{\tilde{c}_{2,2}}t_{\tilde{c}_{1,2}}) (t_{\tilde{c}_{3,1}}t_{\tilde{c}_{2,1}}t_{\tilde{c}_{1,1}}) \\
=&\prod_{j=6}^1 \prod_{i=3}^1 t_{\tilde{c}_{i,j}}.
\end{align*}
\end{exam}

Generalizing the $n=3$ example, we now prove the main theorem for the $n=3k$ case.

\begin{proof}[Proof of ~\eqref{eq:genfact_1}, $n=3k$ case]
First note that $\Phi_O$ consists of positive Dehn twists along curves along which the quotient is drawn in Figure ~\ref{fig:perturbfiber1}.
Since the torus corresponds to $f=0$ and each genus-$2$ subsurface corresponds to $yf^2=z_n^l \ep_n^2$,
each vanishing cycle corresponds to $\tilde{\delta}$, and $\tilde{d_i}$ in Figure~\ref{fig:vc}, respectively.
Therefore, we have
\[\Phi_O=t_{\tilde{d}_1}\cdots t_{\tilde{d}_n} \cdot t_{\tilde{\delta}}^2\]
in $\mathcal{M}(\hat{f}_n^{t_O})$.

Now, we need to calculate the remaining cycles.
Following a process similar to Example~\ref{ex:n=3}, 
we perturb $\hat{f}_n^{t_O}$ so that all $\Phi$-branch points are aligned in a straight line. 
We arrange words of Dehn twists by commuting with each other, and
after applying Hurwitz move to some words, we obtain the desired result.

Let $I_i=[\frac{n}{3} i,\frac{n}{3}(i+1))\cap \Z$, with the reverse order relative to the standard order in $\Z$.
The monodromy factorization we obtained can be written as
\[ \Phi_O \cdot 
\left(\prod_{i\in I_5} t_{\tilde{c}_{i,0}}t_{\tilde{c}_{i,1}}t_{\tilde{c}_{i,2}}\right)
\left(\prod_{i\in I_4} t_{\tilde{c}_{i,0}}t_{\tilde{c}_{i,1}}t_{\tilde{c}_{i,2}}\right)
\cdots
\left(\prod_{i\in I_0} t_{\tilde{c}_{i,0}}t_{\tilde{c}_{i,1}}t_{\tilde{c}_{i,2}}\right),\]
For example, in the first product, the $i=2n-1$ term appears first, and the $i=5n/3$ term appears last.

We now commute some words in this factorization.
One can verify the following properties of the vanishing cycles $\tilde{c}_{i,j}$.

\begin{enumerate}[(i)]
\item For a fixed $i$, the curves $\tilde{c}_{i,j}$ are mutually disjoint for $j=0,1,2$.
\item For each $i\in I_1$, $\tilde{c}_{i,2}$ and $\tilde{c}_{i',j'}$ are disjoint for $i' <i$ and $j'\neq 2$.
\item For each $\iota, \iota' \in I_i$, $\tilde{c}_{\iota,j}$ and $\tilde{c}_{\iota', j'}$ are disjoint when $\iota'<\iota, j\neq j'$.
\item For each $\iota\in I_i$ and $\iota'\in I_{i-1}$, $\tilde{c}_{\iota,j}$ is disjoint from $\tilde{c}_{\iota', j-1}$, where the index $j$ is taken modulo 3.
\end{enumerate}

Using these facts, we can reorder the products. Defining $\Phi_{i,j}$ as 
\begin{align}
\Phi_{i,j}:=\prod_{\iota\in I_i}t_{\tilde{c}_{\iota,j}}, 
\end{align}
we rewrite the factorization as
\[ \Phi_O  (\Phi_{5,2}\Phi_{5,0}\Phi_{5,1})(\Phi_{4,2}\Phi_{4,0}\Phi_{4,1})(\Phi_{3,2}\Phi_{3,0}\Phi_{3,1})
(\Phi_{2,2}\Phi_{2,0}\Phi_{2,1})(\Phi_{1,0}\Phi_{1,1})(\Phi_{0,1})(\Phi_{1,2}\Phi_{0,2}\Phi_{0,0}). \]
Since $\Phi_{i,j}$ and $\Phi_{i-1, j-1}$ commute, we can rearrange the terms to obtain:
\begin{align*}
&(\Phi_{5,2}\Phi_{5,0}\Phi_{5,1})(\Phi_{4,2}\Phi_{4,0}\Phi_{4,1})(\Phi_{3,2}\Phi_{3,0}\Phi_{3,1})
(\Phi_{2,2}\Phi_{2,0}\Phi_{2,1})(\Phi_{1,0}\Phi_{1,1})(\Phi_{0,1})\\
=&(\Phi_{5,2})(\Phi_{5,0}\Phi_{4,0})(\Phi_{5,1}\Phi_{4,1}\Phi_{3,1})(\Phi_{4,2}\Phi_{3,2}\Phi_{2,2})
(\Phi_{3,0}\Phi_{2,0}\Phi_{1,0})(\Phi_{2,1}\Phi_{1,1}\Phi_{0,1}).
\end{align*}
We now apply a Hurwitz move to $\Phi_{1,2}\Phi_{0,2}\Phi_{0,0}$.
Let $\Phi'_{i,j}=\Phi_O^{-1}\circ \Phi_{i,j}\circ \Phi_O$.
Then the factorization can be written as
\[\Phi_O (\Phi'_{1,2}\Phi'_{0,2}\Phi'_{0,0})(\Phi_{5,2})(\Phi_{5,0}\Phi_{4,0})(\Phi_{5,1}\Phi_{4,1}\Phi_{3,1})(\Phi_{4,2}\Phi_{3,2}\Phi_{2,2})
(\Phi_{3,0}\Phi_{2,0}\Phi_{1,0})(\Phi_{2,1}\Phi_{1,1}\Phi_{0,1}).\]
Since $\Phi'_{0,0}$ and $\Phi_{5,2}$ commute, we finally write
\begin{align}
&\Phi_O (\Phi'_{1,2}\Phi'_{0,2}\Phi_{5,2})(\Phi'_{0,0}\Phi_{5,0}\Phi_{4,0})(\Phi_{5,1}\Phi_{4,1}\Phi_{3,1})(\Phi_{4,2}\Phi_{3,2}\Phi_{2,2})
(\Phi_{3,0}\Phi_{2,0}\Phi_{1,0})(\Phi_{2,1}\Phi_{1,1}\Phi_{0,1}).
\end{align}

It can be shown that the curves $\tilde{c}_{i,j}$ and $\tilde{c}'_{i,j}$ are drawn in Figure~\ref{fig:vc}.
Taking into account Theorem~\ref{thm:comb_1}, we see that performing a negative Dehn twist along the curve $d$
yields Equation~\eqref{eq:genfact_1}.
\end{proof}

\subsection{Vanishing cycles, $n\neq3k$ case}
Many aspects of the construction are identical to the $n=3k$ case; however, the critical points $y_{i,j(i)}$ and singular values $t_{i}$ differ. We construct a path system $\gamma_i(s)$ as defined in \eqref{eq:path_system} for $i=0,\dots , 6n-1$. Note that the branch points of $\Phi_n^0$ are given by $y_j^0=z_6^{2j+1}$, and
\begin{align}
y_{i,j(i)}^0= \begin{cases} z_6^{2j(i)+1}(1+ \frac{1}{3} z_{12n}z_{6n}^i {\ep_n}) & (n \text{ odd}) \\
z_6^{2j(i)+1}(1+ \frac{1}{3} z_{6n}^i {\ep_n}) & (n \text{ even}). \end{cases}
\end{align}
These points are obtained by solving $y^{n}f^{2n}={\ep_n^{2n}}$, where $j(i)$ is the same function as defined in Lemma~\ref{lem:j(i)}.

\begin{claim} \label{claim:overlap}
For $n\neq3k$, $y_{j(i)}^0$ and $y_{i,j(i)}^0$ merge at $y_{i,j(i)}$ for each $i$, as they move in the fiber over the curve ${\gamma}_{i}$.
\end{claim}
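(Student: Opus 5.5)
I will follow the proof of Claim~\ref{claim:branch_move} almost verbatim, replacing $z_{2n}^i$ (resp.\ $z_{4n}z_{2n}^i$) by $z_{6n}^i$ (resp.\ $z_{12n}z_{6n}^i$) and $j$ by $j(i)$. Fix $i$ and, assuming for notation that $n$ is even, define the straight-line path
\[y_i(s):=z_6^{2j(i)+1}\Bigl(1+s\,\tfrac{c_n}{3}\,z_{6n}^i\,\ep_n\Bigr),\qquad 0\le s\le 1,\]
so that $y_i(0)=y_{j(i)}^0$ and $y_i(1)=y_i$, the critical point from the preceding proposition. The claim then reduces to showing that the image curve $s\mapsto\phi_n(0,y_i(s))$ in $\C_t$ is homotopic, relative to its endpoints and in the complement of the other singular values, to the radial part of $\gamma_i$ ending at $t_i$. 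Since the initial arc of $\gamma_i$ lies on the tiny circle $|t|=\ep_n^{4n}$, along which the branch points $y_{j(i)}^0$ and $y_{i,j(i)}^0$ are essentially stationary, this suffices. Moreover, $(0,y_i(s))$ lies on $\{x=0\}$, so it is a branch point of $\Phi_n^{\phi_n(0,y_i(s))}$, and it is tracked continuously from $y_{j(i)}^0$ to the merging point $y_i$.

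For the computation, I will use $-y_i(s)^3-1\approx s\,c_n z_{6n}^i\ep_n$ to first order. I will also need $y_i(s)^n f^{2n}$. Here the key input, unlike the $n=3k$ case, is that $y_i(s)^n\approx z_6^{n(2j(i)+1)}$ is not $\pm1$. However, Lemma~\ref{lem:j(i)}'s congruence $2nj(i)\equiv i+m(+1)\pmod 3$ is exactly the statement that
\[z_6^{n(2j(i)+1)}\,(z_{6n}^i)^{2n}\cdot c_n^{2n}=\tfrac{1}{2n+1}.\]
Consequently $y_i(s)^n f^{2n}\approx s^{2n}\ep_n^{2n}/(2n+1)$, which is a positive real multiple. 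Substituting this gives
\[\phi_n(0,y_i(s))\approx -s\,c_n\,\tfrac{2n+1-s^{2n}}{2n+1}\,z_{6n}^i\,\ep_n^{2n+1},\]
up to an error of order $\ep_n^{2n+2}$. The main term is a monotone real rescaling of $t_i$ from $0$ to $t_i$, so it traces the ray through $t_i$. The odd case is identical, with the extra factor $z_{12n}$.

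The main obstacle is making the error control honest. The singular values $t_{i'}$ are spaced at angle $2\pi/(6n)$, which is smaller than in the $3k$ case, so I must check that the $O(\ep_n^{2n+2})$ correction, together with the linearization errors in $y^3$ and $y^n$, stays within an angular sector of width less than $\pi/(6n)$ around the ray. I will bound the relative error by a constant times $n\ep_n=1/10$ and compare it with $\sin(\pi/6n)$, deferring the precise estimates to the appendix-style argument already invoked for the critical points. I also need to confirm that the curve does not pass near $t=0$ except at $s=0$, which follows since the main term has modulus comparable to $s\,|t_i|$.
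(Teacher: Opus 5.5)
Your proposal is correct and follows the paper's proof essentially line for line: the same straight-line path $y_i(s)$ from $y^0_{j(i)}$ to $y_i$, the same use of the congruence of Lemma~\ref{lem:j(i)} to make $y^n f^{2n}\approx s^{2n}\ep_n^{2n}/(2n+1)$ a positive real, and the same conclusion that $\phi_n(0,y_i(s))$ runs along the ray through $t_i$. You are also right to keep the factor $z_{12n}$ in the odd case; the paper's displayed last line drops it.

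One caution on the extra error control you sketch. A relative error bounded only by $C\,n\ep_n=C/10$ cannot beat $\sin(\pi/6n)$ for large $n$. However, the $O(n\ep_n)$ corrections from $(1+\cdot)^n$ and $(1+\cdot)^{2n}$ enter $\phi_n=-f\ep_n^{2n}\bigl(1-y^nf^{2n}\ep_n^{-2n}\bigr)$ only through the term of weight $s^{2n}/(2n+1)$. So the true angular deviation from the ray is $O(1/n)$ with a small constant, consistent with your own $O(\ep_n^{2n+2})$ estimate.
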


\begin{proof}
For the case $n=2m+1$, define $y_i(s)$ by
\[y_i(s)=z_6^{2j(i)+1}(1+s \cdot \frac{c_n}{3} z_{12n}z_{6n}^i {\ep_n}),\]
where $c_n=(\frac{1}{2n+1})^{1/2n}$.
Then $y_i(0)\in (y^3=-1)$, and since $f(0,y_i(s))=s c_n z_{12n} z_{6n}^i {\ep_n}$, the corresponding value $t_i(s)$ is calculated as
\begin{align*}
f(y_i(s)^n f^{2n}-{\ep_n^{2n}})&=sc_n z_{12n}z_{6n}^i {\ep_n} (z_6^{2nj(i)+n}(1+ns \frac{c_n}{3}z_{12n}z_{6n}^i {\ep_n}) 
\cdot (2n+1)^{-1} s^{2n} z_6 z_3^i {\ep_n^{2n}} -{\ep_n^{2n}} )\\
&\approx sc_n z_{12n} z_{6n}^i (z_6^{2i+2nj(i)+2m+2} {s^{2n}}{(2n+1)}^{-1} -1){\ep_n^{2n+1}} \\
&= sc_n z_{6n}^i ( {s^{2n}}{(2n+1)}^{-1} -1){\ep_n^{2n+1}},
\end{align*}
where the last equality follows from the fact that $i+nj(i)+m+1\equiv 0 \pmod{3}$.
Thus, the endpoint of the path $y_{i}(s)$ is equal to $y_{i, j(i)}$.
Since these trajectories are nearly straight-line segments, they can be isotoped to $\overline{\gamma}_{i}$ simultaneously, without creating pairwise intersections in their interiors.
The case for $n=2m$ follows a similar movement of branch points in $\C_y$.
\end{proof}

\begin{claim} \label{claim:subsurface2}
For $0\le i \le n-1$, 
the points $y^0_{i,j(i)}$, $y^0_{i+n,j(i+n)}$, $y^0_{i+2n,j(i+2n)}$, $y^0_{i+3n,j(i+3n)}$, $y^0_{i+4n,j(i+4n)}$, $y^0_{i+5n,j(i+5n)}$
lie on the surface $yf^2=z_{n}^{l(i)} {\ep_n^{2}}$ for some $l(i)\in \{0,\dots, n-1\}$.
Hence we obtain a map \[l:\{0,\dots, 6n-1\} \arrowr \{0,\dots,n-1\}.\]
Moreover, the restriction of $l$ to $\{0,\dots, n-1\}$ is bijective.
\end{claim}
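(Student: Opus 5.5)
The plan is to exploit the defining equation of the points $y^0_{i,j(i)}$ directly, and to use the approximations only to decide \emph{which} $n$-th root of unity occurs. By construction, each $y^0_{i,j(i)}$ is a root of $y^n f^{2n}=\ep_n^{2n}$ with $f=f(0,y)=-y^3-1$. Hence $(yf^2)^n=\ep_n^{2n}$ holds exactly, so $yf^2=z_n^{l}\ep_n^2$ for some $l\in\{0,\dots,n-1\}$. This already puts every point on one of the $n$ surfaces $yf^2=z_n^l\ep_n^2$. It remains to compute $l=l(i)$ and to check the two combinatorial assertions: invariance of $l$ under $i\mapsto i+n$, and bijectivity on $\{0,\dots,n-1\}$.

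To compute $l(i)$, I will treat the case $n=2m+1$; the case $n=2m$ is identical with $z_{12n}$ removed. Put $w_i=z_{12n}z_{6n}^i\ep_n$, so that $y^0_{i,j(i)}\approx z_6^{2j(i)+1}(1+w_i/3)$. Then $y^3\approx-(1+w_i)$, so $f\approx w_i$, and
\[
yf^2\approx z_6^{2j(i)+1}\,z_{6n}^{2i+1}\,\ep_n^2=z_{6n}^{E_i}\ep_n^2,\qquad E_i:=n(2j(i)+1)+2i+1 .
\]
Since the exact value is an $n$-th root of unity times $\ep_n^2$, and the roots $z_n^l$ are separated by a distance much larger than the error (of order $\ep_n^3$, as in the appendix estimates), $l(i)$ is determined by $z_{6n}^{E_i}=z_n^{l(i)}$. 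As a consistency check I will verify that $E_i\equiv 0 \pmod 6$. Parity is immediate, since $n$ is odd. Modulo $3$, the congruence $2nj(i)\equiv i+m+1$ from Lemma~\ref{lem:j(i)} gives $E_i\equiv 3i+3m+3\equiv 0$. So $l(i)\equiv E_i/6 \pmod n$.

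For invariance under $i\mapsto i+n$: by Lemma~\ref{lem:j(i)}, $2n\,(j(i+n)-j(i))\equiv n \pmod 3$, which gives $j(i+n)\equiv j(i)+2\pmod 3$. Because $z_6^{2j+1}$ depends only on $j \bmod 3$, the factor $z_6^{2j+1}$ is multiplied by $z_6^{4}=z_{6n}^{4n}$. Meanwhile $z_{6n}^{2i}$ is multiplied by $z_{6n}^{2n}$, so $z_{6n}^{E}$ is multiplied by $z_{6n}^{6n}=1$. Therefore $l(i+n)=l(i)$, which gives the six points $y^0_{i+\nu n,j(i+\nu n)}$ ($\nu=0,\dots,5$) on a common surface, and defines $l$ on $\{0,\dots,6n-1\}$.

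For bijectivity, let $\Delta\in\{0,1,2\}$ be the fixed residue with $2n\Delta\equiv 1\pmod 3$, so that $j(i+1)\equiv j(i)+\Delta$. Then $E_{i+1}-E_i\equiv 2+2n\Delta \pmod{6n}$, and $l(i+1)-l(i)\equiv \alpha \pmod n$ with $\alpha=(1+n\Delta)/3$. This is an integer because $n\Delta\equiv 2 \pmod 3$ (from $2n\Delta\equiv1$). Any common divisor of $\alpha$ and $n$ divides $1+n\Delta$ and $n$, hence is $1$. So $i\mapsto l(0)+\alpha i$ is a bijection of $\Z_n$ on $\{0,\dots,n-1\}$.

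The main obstacle I expect is bookkeeping. The reduction $j\bmod 3$ interacts with the exponent modulo $6n$, and the even case shifts the constant in Lemma~\ref{lem:j(i)}, so I will redo the parity and mod-$3$ checks separately there. The analytic part is harmless, because the exact relation $(yf^2)^n=\ep_n^{2n}$ reduces it to separating the $n$-th roots of unity.
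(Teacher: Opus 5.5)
Your proposal is correct and follows essentially the paper's route. You get $yf^2\approx z_{6n}^{E_i}\ep_n^2$ with $E_i/6$ equal to the paper's $\frac13(i+nj(i)+m+1)$, use the same shift $j(i+n)\equiv j(i)-1 \pmod 3$ for invariance, and prove bijectivity by showing the step $\Delta l$ is a unit mod $n$. Your uniform identity $3\alpha=1+n\Delta$ is a cleaner substitute for the paper's four-case table and Euclidean algorithm; it is exactly the paper's side remark $3\Delta l\equiv 1 \pmod n$.

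One small caution concerns the root-separation step. Distinct values $z_n^l\ep_n^2$ are only about $2\pi\ep_n^2/n=20\pi\ep_n^3$ apart, which is the same order as your error term, so ``much larger'' rests on constants. You can sidestep this with $F=f(1+f)^{1/6}$, since then $yf^2=z_6^{2j+1}F^2$ holds exactly. Hence the root labelled $i$ (the one with $F=w_i$) satisfies $yf^2=z_{6n}^{E_i}\ep_n^2$ on the nose.
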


\begin{proof}
By calculating $y^0_{i,j(i)}f(0,y^0_{i,j(i)})^2$, we obtain
\begin{align*}
y^0_{i,j(i)}f(0,y^0_{i,j(i)})^2 \approx 
\begin{cases} z_{6n}^{2nj(i)+2m+2i}{\ep_n^{2}} & (n=2m ) \\
z_{6n}^{2nj(i)+2m+2i+2}{\ep_n^{2}} & (n=2m+1) \end{cases} \end{align*}
Since $2nj(i)+2m+2i$ ($2nj(i)+2m+2i+2$, resp.) is a multiple of 6, we have $6l \equiv 2nj(i)+2m+2i \pmod{6n}$ ($6l \equiv 2nj(i)+2m+2i+2 \pmod{6n}$, resp.), so that
\begin{align} \label{eq:l(i)}
l=l(i)\equiv \begin{cases} \frac{1}{3}(i+nj(i)+m) \pmod{n} & (n=2m) \\ \frac{1}{3}(i+nj(i)+m+1) \pmod{n}& (n=2m+1) \end{cases}
\end{align}
Note that $j(i+n)\equiv j(i)-1 \pmod{3}$ and $j(i+2n) \equiv j(i)+1 \pmod{3}$, since
$nj(i)\equiv 2i+n \pmod{3}$ for $n=2m$, and $nj(i) \equiv 2i+n+1 \pmod 3$ for $n=2m+1$.
Hence, we have $l(i)=l(i+n)=l(i+2n)=l(i+3n)=l(i+4n)=l(i+5n)$.

To prove the bijectivity of $l(i)$, we divide $n$ into four cases: $n=6k+1, 6k+5$ (odd cases), and $n=6k+2, 6k+4$ (even cases).
Define $\Delta j:=j(i+1)-j(i)$ and $\Delta l:= l(i+1)-l(i)$.
From the relations
\begin{align*}
&1+n \Delta j \equiv 0 \pmod{3} \\
&3\Delta l \equiv 1 +n\Delta j \pmod{3n}
\end{align*}
along with \eqref{eq:l(i)}, we obtain Table~\ref{tab:data}.
\begin{table}[ht] 
\centering
\begin{tabular}[t]{lcc|cc}
\hline
$n$		&$j(0) \mod3$	&$l(0)\mod n$	& $\Delta j \mod3$ 	&$\Delta l \mod n$\\
\hline
$6k+1$	& 2			&$5k+1$		&$-1$			&$-2k$		\\
$6k+5$	& 0			&$k+1$		&1				&$2k+2$		\\
\hline
$6k+2$	&1			&$3k+1$		&1				&$2k+1$		\\
$6k+4$	&1	 		&$3k+2$		&$-1$			&$-2k-1$		\\
\hline
\end{tabular}
\caption{Table of initial data $j(0), l(0)$ and the change of $j, l$ for each case}
\label{tab:data}
\end{table}

Using the Euclidean algorithm, we can show that in each case, $n$ and $\Delta l$ are relatively prime. 
Thus, the map 
$l: \{0,\dots, n-1\} \arrowr \{0,\dots, n-1\}$ is bijective.
\end{proof}
One remark in the proof is that $3 (\Delta l) \equiv 1 \pmod n$, a fact that will be utilized later.

Next, we consider how the movement of $y_i (s)$ in each fiber can be related to the reference fiber $\hat{f}_n^{t_O}$.
The method for finding the branch point $b_l(t_i)$ of $\pi_y$ is the same as in Section \ref{sec:vc_3k}:
find $l$ such that $b_{l}(t_i)=z_6^{2j(i)+1}$.
Since $b_{l}(t_i)$ is given by
\begin{align*}
b_{l}(t_i)=\begin{cases} z_n^l z_{6n}^{-1}z_{3n}^{-i} & (n=2m+1) \\ z_n^l z_{3n}^{-i}, & (n=2m) \end{cases}
\end{align*}
we have the equation \[3l=i+nj(i)+m \pmod{3n} \] for $n=2m$ and \[3l=i+nj(i)+m+1 \pmod{3n}\] for $n=2m+1$.
Therefore, $l=l(i)$.

Before proving the main theorem, let us see the example.

\begin{exam}
Assume $n=4$. 
For each singular value $t_i$ ($i=0,\dots, 23$), let $\tilde{c}_{\iota(i), j(i)}$ denote the corresponding vanishing curve,
where $\iota(i):=[\frac{i}{3}]$.
Since Table~\ref{tab:data} gives $j(0)=1$ and
\[j(i+1)\equiv j(i)+1 \pmod 3,\] we have the monodromy factorization
\[\Phi_O \cdot \prod_{\iota=7}^0(t_{\tilde{c}_{\iota,2}}t_{\tilde{c}_{\iota,0}}t_{\tilde{c}_{\iota,1}})\]
when we simply write $\iota(i)$ as $\iota$.
Note that the curves $\tilde{c}_{\iota,j}$ do not intersect for a fixed $\iota$.
Also, the curves $\tilde{c}_{\iota(i),j(i)}$ and $\tilde{c}_{\iota(i'),j(i')}$ do not intersect for $|i'-i|\le 2$.
Using the commutativity of Dehn twists along disjoint curves, 
we can therefore rewrite the factorization as
\begin{align*}
&\Phi_O \cdot (t_{\tilde{c}_{7,2}}t_{\tilde{c}_{6,2}})
(t_{\tilde{c}_{7,0}}t_{\tilde{c}_{6,0}}t_{\tilde{c}_{5,0}})
(t_{\tilde{c}_{7,1}}t_{\tilde{c}_{6,1}}t_{\tilde{c}_{5,1}}t_{\tilde{c}_{4,1}})
(t_{\tilde{c}_{5,2}}t_{\tilde{c}_{4,2}}t_{\tilde{c}_{3,2}}t_{\tilde{c}_{2,2}})\\
&(t_{\tilde{c}_{4,0}}t_{\tilde{c}_{3,0}}t_{\tilde{c}_{2,0}}t_{\tilde{c}_{1,0}})
(t_{\tilde{c}_{3,1}}t_{\tilde{c}_{2,1}}t_{\tilde{c}_{1,1}}t_{\tilde{c}_{0,1}})
(t_{\tilde{c}_{1,2}}t_{\tilde{c}_{0,2}}t_{\tilde{c}_{0,0}}).
\end{align*}
By applying a Hurwitz move to $t_{\tilde{c}_{1,2}}t_{\tilde{c}_{0,2}}t_{\tilde{c}_{0,0}}$, and letting
$\tilde{c}'_{i,j}:=\Phi_O^{-1}(\tilde{c}_{i,j})$,
we observe that since $\tilde{c}'_{0,0}$ does not intersect with $\tilde{c}_{7,2}$ and $\tilde{c}_{6,2}$, we have
\begin{align*}
&\Phi_O \cdot (t_{\tilde{c}'_{1,2}}t_{\tilde{c}'_{0,2}}t_{\tilde{c}_{7,2}}t_{\tilde{c}_{6,2}})
(t_{\tilde{c}'_{0,0}}t_{\tilde{c}_{7,0}}t_{\tilde{c}_{6,0}}t_{\tilde{c}_{5,0}})
(t_{\tilde{c}_{7,1}}t_{\tilde{c}_{6,1}}t_{\tilde{c}_{5,1}}t_{\tilde{c}_{4,1}})\\
&(t_{\tilde{c}_{5,2}}t_{\tilde{c}_{4,2}}t_{\tilde{c}_{3,2}}t_{\tilde{c}_{2,2}})
(t_{\tilde{c}_{4,0}}t_{\tilde{c}_{3,0}}t_{\tilde{c}_{2,0}}t_{\tilde{c}_{1,0}})
(t_{\tilde{c}_{3,1}}t_{\tilde{c}_{2,1}}t_{\tilde{c}_{1,1}}t_{\tilde{c}_{0,1}})
\end{align*}
After reindexing the curves so that each parenthesized block is labeled consecutively,
we recover the factorization \eqref{eq:genfact_1} for $n=4$.
\end{exam}

\begin{proof}[Proof of \eqref{eq:genfact_1}, $n\neq 3k$ case]
Let $\tilde{c}_i:=\tilde{c}_{\iota(i), j(i)}$, where $\iota(i)=[\frac{i}{3}]$.
The monodromy factorization is given by
\begin{align}
\Phi_O \cdot \prod_{\iota=2n-1}^0 (t_{\tilde{c}_{\iota,2}}t_{\tilde{c}_{\iota,0}}t_{\tilde{c}_{\iota,1}}).
\end{align}
Since $\tilde{c}_{\iota,0},\tilde{c}_{\iota,1},\tilde{c}_{\iota,2}$ do not intersect pairwise, their corresponding Dehn twists commute.
Let $n_0=\left[\frac{n}{3}\right], n_1=\left[\frac{n-2}{3}\right]$, and $n_2=\left[\frac{n}{3}\right]-1$.
Each integer $n_i+1$ counts the number of $l$ values with an angle $\theta$ such that $\frac{2i-3}{3} \pi \le \theta \le \frac{2i-1}{3} \pi$.
As in the $n=3k$ case, we can move words 
\[t_{\tilde{c}_{n_1+n_2+1,2}} \cdots t_{\tilde{c}_{0,2}} \cdot t_{\tilde{c}_{n_2,0}} \cdots t_{\tilde{c}_{0,0}}\]
to the front. Applying a Hurwitz move at those words, and using the commutativity of Dehn twists for non-intersecting curves, the expression can be rewritten as
\begin{align*}
&\Phi_O\cdot 
(t_{\tilde{c}'_{n_1+n_2+1,2}} \cdots t_{\tilde{c}'_{0,2}} \cdot 
t_{\tilde{c}'_{n_2,0}} \cdots t_{\tilde{c}'_{0,0}})\cdot 
(t_{\tilde{c}_{2n-1,2}}\cdots t_{\tilde{c}_{2n-n_0-1,2}})
(t_{\tilde{c}_{2n-1,0}}\cdots t_{\tilde{c}_{n+n_2+1,0}})\cdot \\
&(t_{\tilde{c}_{2n-1,1}}\cdots t_{\tilde{c}_{n,1}}) \cdot 
(t_{\tilde{c}_{2n-n_0-2,2}}\cdots t_{\tilde{c}_{n-n_0-1,2}})\cdot 
(t_{\tilde{c}_{n+n_2,0}}\cdots t_{\tilde{c}_{n_2+1,0}})\cdot (t_{\tilde{c}_{n-1,1}}\cdots t_{\tilde{c}_{0,1}}),
\end{align*}
where $\tilde{c}'_{i,j}=\Phi_O^{-1}(\tilde{c}_{i,j})$.
Since $t_{\tilde{c}'_{n_2,0}} \cdots t_{\tilde{c}'_{0,0}}$ commutes with $t_{\tilde{c}_{2n-1,2}} \cdots t_{\tilde{c}_{2n-n_0-1,2}}$, 
the factorization becomes
\begin{align*}
&\Phi_O\cdot 
(t_{\tilde{c}'_{n_1+n_2+1,2}} \cdots t_{\tilde{c}'_{0,2}} \cdot 
t_{\tilde{c}_{2n-1,2}}\cdots t_{\tilde{c}_{2n-n_0-1,2}})\cdot
(t_{\tilde{c}'_{n_2,0}} \cdots t_{\tilde{c}'_{0,0}}\cdot
t_{\tilde{c}_{2n-1,0}}\cdots t_{\tilde{c}_{n+n_2+1,0}})\cdot \\
&(t_{\tilde{c}_{2n-1,1}}\cdots t_{\tilde{c}_{n,1}}) \cdot 
(t_{\tilde{c}_{2n-n_0-2,2}}\cdots t_{\tilde{c}_{n-n_0-1,2}})\cdot 
(t_{\tilde{c}_{n+n_2,0}}\cdots t_{\tilde{c}_{n_2+1,0}})\cdot (t_{\tilde{c}_{n-1,1}}\cdots t_{\tilde{c}_{0,1}}).
\end{align*}
Note that the first two blocks have lengths
\[(n_1+n_2+2)+(n_0+1)=n\]
and
\[(n_2+1)+(n-n_2-1)=n,\]
and each of the remaining four blocks has length $n$.
After re-indexing curves and applying a negative Dehn twist along the curve $\delta$, we obtain ~\eqref{eq:genfact_1}.
\end{proof}

The following theorem determines the global conjugation in the geometrically obtained monodromy factorization, by using relations in the mapping class group to find a word factorization that is exactly equal to $\sigma_{2n+1}^{2n}$ and whose Dehn twist curves are related by conjugation.

\begin{thm} \label{thm:comb_1}
There is a word factorization of $\sigma_{2n+1}^{2n}$
\begin{align} \label{eq:monofacgen1}
\sigma_{2n+1}^{2n}=t_{\delta} \prod_{i=1}^n (t_{d_i} t_{d'_i}) \left(\prod_{j=6}^1 \prod_{i=n}^1 t_{c_{i,j}} \right) .
\end{align}
Moreover, after applying the left-handed Dehn twist along the curve $d$ on each vanishing cycles obtained in the geometric method,
the monodromy factorization agrees with \eqref{eq:monofacgen1}.
\end{thm}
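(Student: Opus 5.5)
We prove Theorem~\ref{thm:comb_1} by working entirely in the hyperelliptic quotient, following the picture already used for the vanishing arcs. The fiber $\Sigma_{3n+1,1}$ is the double cover of $D^2$ branched at $6n+3$ points (Figure~\ref{fig:hyperelliptic}). All the curves $c_{i,j}$, $\delta$, $d_i$, $d'_i$ are hyperelliptic lifts of arcs or of curves enclosing an even number of branch points. It therefore suffices to check the identity in the braid group $B_{6n+3}$, or the mapping class group $\mathcal{M}(D^2, 6n+3)$. We must also make sure that the lift of each factor is the intended one. In particular, $t_{\tilde d_i}$ lifts to $t_{d_i}t_{d'_i}$, and the boundary-adjacent twist $t_{\tilde\delta}^2$ lifts to $t_\delta$. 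The Birman--Hilden correspondence then transports the braid identity to the required identity in $\mathcal{M}(\Sigma_{3n+1,1})$, up to the boundary twist. That twist is controlled by comparing total exponents, or the fractional twist coefficient, which for $\sigma^{2n}_{2n+1}$ is read off from the valency $(2n/(2n+1)+(n+1)/(2n+1)+(n+1)/(2n+1))$.

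The first step is to write $\sigma_{2n+1}$ explicitly as a braid. Think of the $6n+3$ branch points as three clusters of $2n+1$ points, arranged around the three points $O_0,\dots$ exactly as in Figure~\ref{fig:branched_cover_n}. In the $(X,y)$ description, $\sigma_{2n+1}$ is the deck-type rotation $y\mapsto z_{2n+1}$-twisted map of the $(2n+1)$-fold cover $\hat\pi_y$. After conjugation, this rotation is the $1/(2n+1)$-fractional power of the full twist on a suitable grouping of the branch points. Concretely, we expect $\sigma^{2n}_{2n+1}$ to be realized, up to conjugation, by a product of $2n+1$-periodic braids of the form $(\sigma_1\cdots\sigma_{m})^{k}$ on nested subdisks. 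Here a subdisk containing $2\ell+1$ points carries the $\Z_{2\ell+1}$ rotation, whose hyperelliptic lift has the right valency at each of the three multiple points.

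Next, we factor this periodic braid into half twists using the standard relations $\Delta^2=(\sigma_1\cdots\sigma_m)^{m+1}$, the chain relation, and braid/commutation relations. The goal is to match the block pattern
\[
t_{\tilde\delta}^2\prod_{i=1}^n t_{\tilde d_i}\Bigl(\prod_{j=6}^1\prod_{i=n}^1 t_{\tilde c_{i,j}}\Bigr).
\]
The natural route is to write the periodic braid as $\Phi_O$ times six "staircase" blocks. Each block $\prod_{i=n}^1 t_{\tilde c_{i,j}}$ is a cyclic chain of half twists that advances each branch point one step, mirroring the six blocks of length $n$ found geometrically. Multiplying six such blocks should give a rotation by $6n$ positions composed with $\Phi_O^{-1}$. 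Verifying this is a direct inductive computation on $n$, and one checks it first for $n=1,3$ against Example~\ref{ex:n=3}.

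Finally, we compare with the geometric factorization obtained in the proofs of \eqref{eq:genfact_1}. The arcs $\tilde c_{i,j}$, $\tilde c'_{i,j}$ produced there differ from the combinatorial ones by the single twist $t_{\tilde d}^{-1}$, that is, by $t_d^{-1}$ upstairs. To check this, we apply $t_{\tilde d}^{-1}$ to each arc drawn via Remark~\ref{lem:ppaction} and isotope after aligning the branch points on a line. Since conjugation by $t_d$ preserves the product relation, the geometric word equals $\sigma^{2n}_{2n+1}$ on the nose. I expect the main obstacle to be the second step: producing a clean closed-form half-twist factorization of the periodic braid whose factors land exactly on the described arcs. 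I would first settle it for $n=1,2,3$ and then set up an induction adding three branch points (one per cluster) at a time.
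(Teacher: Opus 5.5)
There is a genuine gap: the algebraic content of the theorem is never supplied. You do not fix any explicit word for $\sigma_{2n+1}^{2n}$. The model in your first step (a product of periodic braids $(\sigma_1\cdots\sigma_m)^k$ on nested subdisks, each carrying its own $\Z_{2\ell+1}$-rotation) is only a guess. Products of rotations on nested subdisks are in general reducible rather than periodic, and you never check that your braid lifts to a map with the valency of $\sigma_{2n+1}^{2n}$. (Also, $\hat\pi_y$ is not a regular cover, since over $y=0$ it has ramification indices $2$ and $1$, so $\sigma_{2n+1}$ is not a deck rotation of it.)

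Your second step, which you yourself call the main obstacle, is deferred to an induction on $n$ that is neither set up nor justified. Passing from $n$ to $n+1$ adds six branch points, two per cluster, not three. So nothing in the proposal shows that a half-twist factorization with exactly the block pattern $t_{\tilde\delta}^2\prod_i t_{\tilde d_i}\prod_j\prod_i t_{\tilde c_{i,j}}$ exists. A smaller point: conjugating every factor by $t_d$ conjugates the product. The geometric word therefore equals $\sigma_{2n+1}^{2n}$ only up to the global conjugation already inherent in identifying the reference fiber, not ``on the nose''.

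What is missing is a concrete starting word; once you have it, no geometric model of the periodic map is needed. By Ishizaka's result, $\sigma_{2n+1}^{2n}=(t_1\cdots t_{6n+2})^6$ for the standard chain of $6n+2$ curves. In braid terms this is $(\sigma_1\cdots\sigma_{6n+2})^6$ on all $6n+3$ points. Split the chain into three kinds of pieces:
\begin{itemize}
\item the $n$ disjoint $5$-chains $a_i=t_{6i-5}\cdots t_{6i-1}$;
\item the $2$-chain $c=t_{6n+1}t_{6n+2}$;
\item the connecting twists $t_{6i}$.
\end{itemize}
Pushing each $t_{6i}$ to the right past $b_i=t_{6i+1}\cdots t_{6n}$ gives $t_1\cdots t_{6n+2}=a_1\cdots a_n\,t_{6n}\,(b_{n-1}^{-1}t_{6n-6}b_{n-1})\cdots(b_1^{-1}t_6b_1)\,c$. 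The key observation is that the image of the $6i$-th chain curve under $b_i^{-1}$ is disjoint from the supports of $a_{i+1},\dots,a_n$. Since the $a_i$ and $c$ pairwise commute, the sixth power then rearranges as $a_1^6\cdots a_n^6c^6$ followed by six blocks of $n$ twists. These twists are along the images of the $6i$-th chain curve under $c^{-j}a_i^{-j+1}b_i^{-1}$, which are the curves $c_{i,j}$ of \eqref{eq:vc_comb1}. The chain relations $a_i^6=t_{d_i}t_{d'_i}$ (odd chain of length $5$) and $c^6=t_\delta$ (even chain of length $2$) then give \eqref{eq:monofacgen1} uniformly in $n$, with no induction.

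Your Birman--Hilden viewpoint is compatible with this, since every relation used lifts from $B_{6n+3}$. Your plan for the second assertion (apply $t_{\tilde d}^{-1}$ to the geometric arcs and compare with Figure~\ref{fig:vc}) is also what the paper does. But without the explicit combinatorial factorization there is nothing to compare the geometric arcs against.
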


\begin{proof}
From \cite{I}, a factorization of $\sigma_{2n+1}^{2n}$ is given as $\sigma_{2n+1}^{2n}=(t_1\cdots t_{6n+2})^6$.

Using braid relations and chain substitutions, we will obtain \eqref{eq:monofacgen1} as follows.
Let us write for $1\le i\le n$, $a_i=t_{6i-5} t_{6i-4} t_{6i-3} t_{6i-2} t_{6i-1}$, $b_i=t_{6i+1} t_{6i+2} \cdots t_{6n}$ $(i\neq n)$, $b_n$ is the identity map, 
and $c=t_{6n+1} t_{6n+2}$.
Also simply denote $t_i$ by $i$.
Note that $(6i)^{b_i^{-1}}$ commutes with $a_{i+1}, a_{i+2},\dots, a_{n}$.
Using braid relations and chain relations, we have
\begin{align*}
&(1\cdots (6n+2))^6 \\
=&(a_1 \cdots a_n \cdot (6n) (6n-6)^{b_{n-1}^{-1}} \cdots 6^{b_1^{-1}} c)^6 \\
=& (a_1^6 \cdots a_n^6 \cdot c^6) ((6n)^{c^{-6} a_n^{-5}} (6n-6)^{c^{-6}a_{n-1}^{-5} b_{n-1}^{-1}} \cdots (12)^{c^{-6} a_2^{-5}b_2^{-1}} 6^{c^{-6} a_1^{-5} b_1^{-1}}) \cdot \\
&((6n)^{c^{-5}a_n^{-4}} (6n-6)^{c^{-5}a_{n-1}^{-4}  b_{n-1}^{-1}} \cdots (12)^{c^{-5}a_2^{-4} b_2^{-1}} 6^{c^{-5}a_1^{-4}  b_1^{-1}}) \cdot \\
&\cdots \cdot ((6n)^{c^{-1}} (6n-6)^{c^{-1} b_{n-1}^{-1}} \cdots (12)^{c^{-1} b_2^{-1}} 6^{c^{-1} b_1^{-1}}) .
\end{align*}
Put 
\begin{equation}\label{eq:vc_comb1}
\begin{aligned}
a_i^6&=t_{d_i} t_{d'_i}\\
c^6&=t_{\delta}\\
c_{i,j}&=(c^{-j}a_{i}^{-j+1}b_{i}^{-1})(6i)
\end{aligned}
\end{equation}
for $i=1,2,\dots, n$, $j=1,\dots 6$.
\end{proof}
\section{Construction of $X_2$ and its perturbation}\label{sec:splitgen2}
\subsection{Construction of local charts}
The construction of local charts realizing a regular neighborhood of $F^2_{n}$ is similar to that in Section \ref{sec:splitgen1}.
Let $X_{n,2}= \phi^{-1}_{n}(D^2_-)$ be the preimage of the lower half-disk of $S^2$, and
let $\phi_{n,2}: X_{n,2} \arrowr D^2$ be the restriction of $\phi_{n}$.
We just describe the transition maps between charts, and $\phi_{n,2}$ on each chart.

\begin{equation} \label{eq:chart2}
    \begin{aligned}
    U_{i,1}\cap U&: (\mathfrak{s}_1, \mathfrak{t}_1)=(t^{-1}, t(st^2)), \\
    U_{i,2} \cap U_{i,1}&:(\mathfrak{s}_2, \mathfrak{t}_2)=(\mathfrak{t}_{1}^{-1}, \mathfrak{s}_{1}\mathfrak{t}_{1}^{n+1})\\
    U_{3,1} \cap U&: (\xi_1,\eta_1)=\left(v, \frac{\sqrt[2n+1]{1-(1+u^2)g(u,v)}^{n+1}}{v^2} g(u,v)\right),\\
    U_{3,j}\cap U_{3,j-1}&: (\xi_j, \eta_j)=(\eta_{j-1}^{-1}, \xi_{j-1} \eta_{j-1}^2) \quad (j=2,\dots, 2n+1)
    \end{aligned}
\end{equation}

\begin{equation} \label{eq:phi2}
    \begin{aligned}
    (\phi_{n,2})|_U&=y^{n+1} f(x,y)^{2n+1}\\
    (\phi_{n,2})|_{U_{i,1}}&=\mathfrak{s}_1\mathfrak{t}_1^{n+1},\\
    (\phi_{n,2})|_{U_{i,2}}&=\mathfrak{t}_2, \\
    (\phi_{n,2})|_{U_{3,j}}&=\xi_j^{2n+1-j} \eta_j^{2n+2-j}. \quad (j=1,\dots, 2n+1)
    \end{aligned}
\end{equation}

The perturbed transition maps and $(\phi_{n,2})_{\ep_n}$ on each chart are given as follows.
\begin{equation} \label{eq:charte2}
    \begin{aligned}
    U_{i,1}\cap U&: (\mathfrak{s}_1, \mathfrak{t}_1)=(t^{-1}, st^2), \\
    U_{i,2} \cap U_{i,1}&:(\mathfrak{s}_2, \mathfrak{t}_2)=(\mathfrak{t}_1^{-1}, \mathfrak{s}_1 \mathfrak{t}_1(\mathfrak{t}_1^n- {\ep_n^{2n}})),\\
    U_{3,1} \cap U&: (\xi_1,\eta_1)=(v, g(u,v) \sqrt[2n+1]{1-(1+u^2)g(u,v)}^{n+1}/v^2 ),\\
    U_{3,2}\cap U_{3,1}&:(\xi_2, \eta_2)=(\eta_1^{-1}, \eta_1(\xi_1\eta_1-{\ep_n})),\\
    U_{3,j+1}\cap U_{3,j}&: (\xi_{j+1}, \eta_{j+1})=(\eta_j^{-1}, \eta_j(\xi_j\eta_j+(z_{2n}^{j-2}-z_{2n}^{j-1}) {\ep_n})).\\
    &(j=2,\dots, 2n) 
    \end{aligned}
\end{equation}

\begin{equation}\label{eq:phie2}
    \begin{aligned}
    (\phi_{n,2})_{\ep_n}|_{U_{i,1}}&=\mathfrak{s}_1 \mathfrak{t}_1 (\mathfrak{t}_1^{n}-{\ep_n^{2n}}), &(i=1,2)\\
    (\phi_{n,2})_{\ep_n}|_{U_{i,2}}&=\mathfrak{t}_2, &(i=1,2)\\
    (\phi_{n,2})_{\ep_n}|_{U_{3,1}}&=\eta_1(\xi_1^{2n}\eta_1^{2n}-{\ep_n^{2n}}), &\\
    (\phi_{n,2})_{\ep_n}|_{U_{3,j}}&= \eta_j \prod_{i=j}^{2n-1}(\xi_j\eta_j -(z_{2n}^{j-1}-z_{2n}^j) {\ep_n}),& (j=2,\dots, 2n)\\
    (\phi_{n,2})_{\ep_n}|_{U_{3,2n+1}}&=\eta_{2n+1}.&
    \end{aligned}
\end{equation}

\begin{figure}[h]
\centering
\includegraphics[scale=0.8]{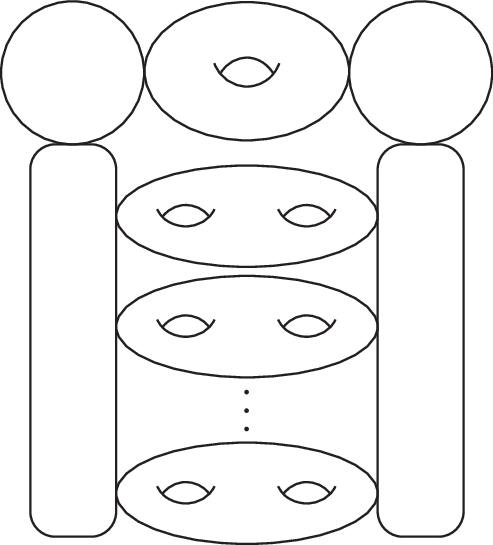}
\caption{Configuration of $((\phi_{n,2})_{\ep_n})^{-1}(0).$}
\label{fig:perturbfiber2}
\end{figure}

\subsection{Critical points and singular values}
For simplicity, we denote $\phi_n=(\phi_{n,2})_{\ep_n} |_U$ in this subsection.
The calculation is analogous to Section~\ref{sec:crit,sing}, so we just describe the conclusion.

\begin{prop}
There are $6n$ critical points in $U$ except the zero fiber, and all critical points are of the form $(0,y)$.
Every critical point lies on distinct singular fiber, except for those on the zero fiber.
For $n=3k$, approximated expressions of critical points $(0,y_{i,j})$ and singular values $t_i$ are given by
    \begin{align}\label{eq:y_i,j, 3}
    y_{i,j}=
    \begin{cases} z_6^{2j+1} (1+\frac{c_n}{3} z_{4n} z_{2n}^i {\ep_n}) & (k \text{ odd}) \\ 
    z_6^{2j+1} (1+\frac{c_n}{3} z_{2n}^i {\ep_n}) & (k \text{ even})
    \end{cases}
    \end{align}
and
    \begin{align}\label{eq:t_i, 3}
    t_{i,j}= -\frac{2nc_n}{2n+1} {\ep_n^{2n+1}}\cdot z_6^{2j+1} \cdot 
    \begin{cases} (z_{4n}z_{2n}^i +\frac{c_n{\ep_n}}{3}  z_{2n}z_n^{i})& (k \text{ odd}) \\
    ( z_{2n}^i+\frac{c_n{\ep_n}}{3} z_n^{i}), &(k \text{ even}) \end{cases}
    \end{align}
where \[c_n=\frac{1}{\sqrt[2n]{2n+1}}.\]

For $n \neq 3k$, approximated expressions of critical points
$(0,y_i)$ and singular values $t_i$, $i=0,\dots, 6n-1$, are given by
\begin{align}
y_i=
\begin{cases}
z_6^{2j(i)+1}(1+\frac{c_n}{3} z_{12n} z_{6n}^i \ep_n )& (n \text{ odd}) \\
z_6^{2j(i)+1}(1+\frac{c_n}{3} z_{6n}^i \ep_n), & (n \text{ even}) 
\end{cases}
\end{align}
where $j(i)\in\{0,1,2\}$ is the unique solution of the modular equation
\begin{align} 
2nj \equiv \begin{cases} i+m+1 \pmod 3 & \quad \text{if}  \quad n=2m+1\\
 i+m \pmod 3 & \quad \text{if}  \quad n=2m, \end{cases}
\end{align}
and
\begin{align} \label{eq:t_i, 2}
t_i =\frac{2nc_n}{2n+1} {\ep_n^{2n+1}} \cdot z_6^{2j(i)+1}\cdot 
\begin{cases} z_{12n}z_{6n}^i +\frac{c_n{\ep_n}}{3}  z_{6n}z_{3n}^{i}& (n \text{ odd}) \\
 z_{6n}^i+\frac{c_n{\ep_n}}{3} z_{3n}^{i}, &(n \text{ even}) \end{cases}
\end{align}
\end{prop}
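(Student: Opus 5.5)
We run the same computation as in Section~\ref{sec:crit,sing}, applied to the perturbed map on the main chart $U$. First I would read off $(\phi_{n,2})_{\ep_n}|_U$ from the transition maps \eqref{eq:charte2}. On $U_{i,1}$ we have $(\mathfrak{s}_1,\mathfrak{t}_1)=(f^{-1},yf^2)$ with $s=y$, $t=f$. Substituting into $(\phi_{n,2})_{\ep_n}|_{U_{i,1}}$ gives
\[\phi_n:=(\phi_{n,2})_{\ep_n}|_U=y f\,(y^n f^{2n}-\ep_n^{2n}).\]
The remaining charts in \eqref{eq:phie2} are visibly products of coordinates and linear factors. Their critical points therefore lie on the zero fiber, which is the nodal configuration of Figure~\ref{fig:perturbfiber2}. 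So, as in Section~\ref{sec:splitgen1}, it suffices to find the critical points of $\phi_n$ on $U$ away from $\phi_n^{-1}(0)$.

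Next I would show that all such critical points have $x=0$. We have
\[\partial_x\phi_n=2xy\bigl((2n+1)y^nf^{2n}-\ep_n^{2n}\bigr).\]
Since $y\neq0$ off the zero fiber, $x\neq0$ would force $y^nf^{2n}=\ep_n^{2n}/(2n+1)$. Substituting this into
\[\partial_y\phi_n=f(y^nf^{2n}-\ep_n^{2n})+y\bigl(ny^{n-1}f^{2n+1}-3y^2((2n+1)y^nf^{2n}-\ep_n^{2n})\bigr),\]
the bracket multiplying $3y^2$ vanishes, and the rest collapses to $-\frac{n}{2n+1}\ep_n^{2n}f$. Hence $f=0$, which contradicts $y^nf^{2n}\neq0$ exactly as before.

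With $x=0$ and $f=-1-y^3$, the equation $\partial_y\phi_n=0$ becomes a polynomial equation in $y$. I would rewrite it as $f^{2n}=R(y,f)\,\ep_n^{2n}/y^n$, where $R$ is a rational expression equal to $1/(2n+1)+O(f)$. After discarding the roots near $y=0$, which lie outside $U$, and counting degrees, this leaves $6n$ simple roots near the three points $y^3=-1$. To first order these satisfy $y^nf^{2n}\approx\ep_n^{2n}/(2n+1)$, the same approximate equation as in Section~\ref{sec:crit,sing}.

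The case analysis then follows the earlier one. For $n=3k$, write $y^n=(-1)^k(f+1)^k$ to get the $2n$ values $f_i$ of \eqref{eq:f_i, 1}, and then $y_{i,j}$ through $(1+x)^{1/3}\approx1+x/3$. For $n\neq3k$, cube the equation to get the $6n$ values \eqref{eq:f_i, 2}, and select the true root with the same congruence as in Lemma~\ref{lem:j(i)}. Evaluating $t=\phi_n(0,y)$ gives
\[t\approx y\,f\bigl(\tfrac{1}{2n+1}-1\bigr)\ep_n^{2n}=-\tfrac{2n}{2n+1}\,y f\,\ep_n^{2n}.\]
Inserting $y=z_6^{2j+1}(1+\frac{c_n}{3}(\cdot)\ep_n)$ and $f=c_n(\cdot)\ep_n$ produces the factor $z_6^{2j+1}$ together with the second-order correction $\frac{c_n\ep_n}{3}z_n^i$ (or $z_{3n}^i$), which gives \eqref{eq:t_i, 3} and its $n\neq3k$ analogue. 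Here lies the real difference from Section~\ref{sec:splitgen1}. The new factor $y$ in $\phi_n$ contributes $z_6^{2j+1}$, so for $n=3k$ the three critical points with the same $i$ now have distinct values, and every critical point lies on its own singular fiber.

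The main obstacle is error control. We must check that the neglected terms are small compared with the separation of the $6n$ singular values; this separation is of order $\ep_n^{2n+1}/n$ in angle, rather than only $\ep_n^{2n+2}$. We must also check that every root is simple, so that each critical point is nondegenerate. I would handle this as in the appendix, using a Rouché-type argument on small disks around each approximate root $f_i$ with $\ep_n=(10n)^{-1}$. I would then verify that the Hessian at $(0,y)$ is a nonzero multiple of $f$ times a unit, so that each critical point is a node.
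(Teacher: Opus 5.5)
Your derivation of $\phi_n=yf\,(y^nf^{2n}-\ep_n^{2n})$ from \eqref{eq:charte2}, the argument forcing $x=0$, and the first-order locations of the critical points are correct. They are what the paper means by ``analogous to Section~\ref{sec:crit,sing}''. The gap is your claim that for $n=3k$ every critical point lies on its own singular fiber.

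\textbf{Why the claim fails at leading order.} The factor $z_6^{2j+1}$ only separates the three critical points with the same $i$. When $n=3k$ we have $z_6^{2j+1}z_{2n}^i=z_{2n}^{\,i+k(2j+1)}$. So the leading term $-\frac{2nc_n}{2n+1}\ep_n^{2n+1}z_6^{2j+1}z_{2n}^i$ takes only $2n$ distinct values. The critical points indexed by $(i,j)$, $(i+2k,j-1)$, $(i+4k,j+1)$ have different $i$ but share the same leading singular value. Your statement that the separation is of order $\ep_n^{2n+1}/n$ ``rather than only $\ep_n^{2n+2}$'' is correct for $n\neq 3k$ and false for $n=3k$. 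In that case the $6n$ values separate exactly at order $\ep_n^{2n+2}$. This is what the paper's remark after the proposition uses: each triple lies on a small circle about $A_{i,j}$ and is permuted by $2\pi/3$-rotations. An error analysis calibrated to the leading scale cannot decide whether these values coincide. Neither can an appendix-type bound $|y-y_{i,j}|=O(n^{-2})$, since relative to $f_i$ it is of the same order $\ep_n$ as the effect to be detected.

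\textbf{What is needed.} You need the $\ep_n^{2n+2}$ term computed consistently, and a remainder that is $o(\ep_n)$ relative to the leading term. Plugging the first-order $y$ and the leading-order $f$ into $yf(\cdots)$ is not consistent. The root $f_i$ has its own first-order correction of the same size, coming from the factor $(1+f)^{-k/(2n)}=(1+f)^{-1/6}$ in the critical-point equation.

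\textbf{A clean route for $n=3k$, $k$ even.} On $x=0$ write $f=\ep_n u$ and $y=z_6^{2j+1}(1+f)^{1/3}$. Then $\phi_n=z_6^{2j+1}\ep_n^{2n+1}\Gamma_\eta(u)$ at $\eta=\ep_n$, where $\Gamma_\eta(u)=(1+\eta u)^{k+1/3}u^{2n+1}-(1+\eta u)^{1/3}u$.
\begin{itemize}
\item For fixed $i$ the three critical values are exact $z_3$-rotations of one another. This is because the critical-point equation involves $f$ only.
\item The envelope formula $\frac{d}{d\eta}\Gamma_\eta(u_c(\eta))=\partial_\eta\Gamma_\eta(u_c(\eta))$ gives the critical value $-\frac{2n}{2n+1}u_0\bigl(1+\frac{\ep_n u_0}{6}+\cdots\bigr)$, with $u_0=c_nz_{2n}^i$.
\item The correction term is rotated by $z_3$ along each triple, which gives distinctness.
\end{itemize}
Done this way, the coefficient is $c_n\ep_n/6$ rather than the $c_n\ep_n/3$ your plug-in reproduces. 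The qualitative picture is unchanged.

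\textbf{A smaller point.} The equation $y^nf^{2n}((7n+4)f+6n+3)=\ep_n^{2n}(4f+3)$ has degree $7n+3$. Besides the $n$ roots near $y=0$ there are three roots near $(7n+4)f+6n+3=0$. These lie outside $U$ because $|f|\approx 6/7$, not because they are near the origin. Discarding only the roots near $y=0$ leaves $6n+3$, so your count does not close as written.
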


\begin{rmk}
In contrast to the $\sigma_{2n+1}^{-1}$ case, the singular values $t_{i,j}$ corresponding to $y_{i,j}$ are all distinct for $n=3k$, resulting in a total of $6n$ values.

Since $f_{i+2k}=z_3 f_{i}$, it follows that $z_6^{2j+1}f_{i}=z_6^{2j-1}f_{i+2k}=z_6^{2j+3}f_{i+4k}$ and $f^2_{i+2k}=z_3^2 f_i^2$.
Writing \[t_{i,j}=A_{i,j}+\frac{c_n{\ep_n}}{3}B_{i,j}\] with $|B_{i,j}|=1$, we observe that 
\[t_{i+2k,j-1}=A_{i,j}+\frac{c_n{\ep_n}}{3}z_3 B_{i,j}\] and \[t_{i+4k,j+1}=A_{i,j}+\frac{c_n{\ep_n}}{3}z^2_3 B_{i,j}.\]
Thus, $t_{i,j}, t_{i+2k,j-1}$, and $t_{i+4k,j+1}$ lie on the circle $|t-A_{i,j}|=\frac{c_n{\ep_n}}{3}$, and are related by $2\pi/3$-rotations.

This observation may affect the choice of the Hurwitz system in the next subsection. 
However, by Remark~\ref{rmk:vc_disjoint}, the vanishing cycles corresponding to the clustered singular values are mutually disjoint. 
Therefore, when choosing Hurwitz paths, it is irrelevant whether the paths are taken to avoid the other singular values.
\end{rmk}

\begin{figure}[h]
\includegraphics[scale=0.65]{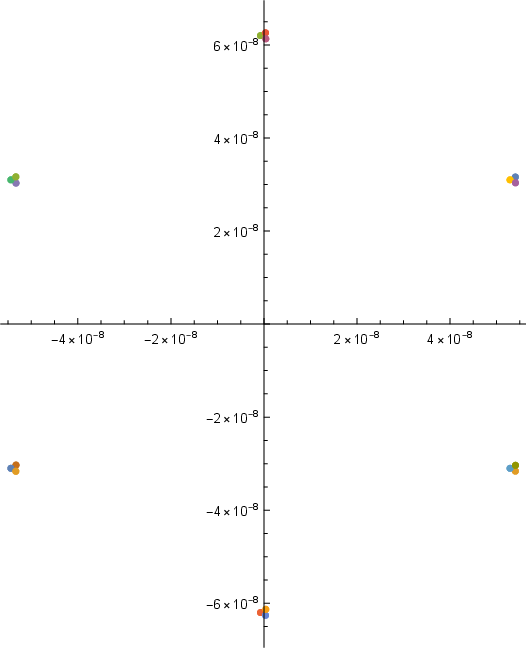}
\includegraphics[scale=0.73]{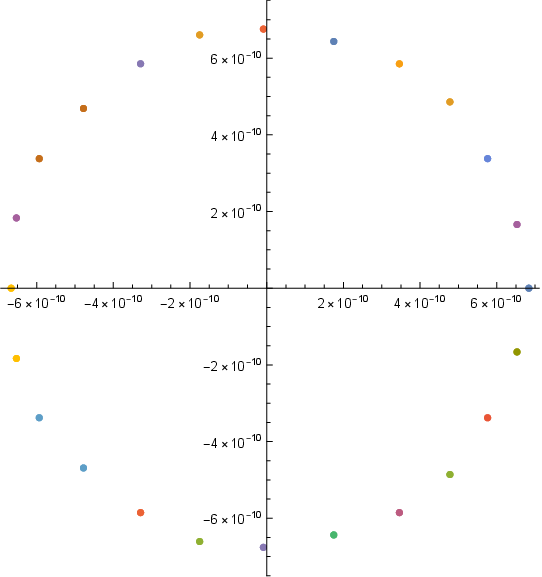}
\caption{Singular values $t_{i,j}$ for $n=3$ (left) and $n=4$ (right).}
\end{figure}

\begin{rmk}
All singular values $t_i$ in \eqref{eq:t_i, 2} are distinct.
If $t_{i}=t_{i'}$ for some $i'\neq i$, it would imply 
\begin{align}\label{eq:ij_condition}
2nj(i)+n+i \equiv 2nj(i')+n+i' \pmod{6n} \Leftrightarrow i'-i\equiv 2n(j(i)-j(i')), \pmod{6n}\end{align}
hence $i'=i\pm 2n$.
When $i'=i+2n$, $j(i+2n)\equiv j(i)+1 \pmod 3$.
Similarly, when $i'=i-2n\equiv i+4n \pmod{6n}$, then $j(i+4n) \equiv j(i)-1 \pmod 3$.
Both cases contradict ~\eqref{eq:ij_condition}.
Therefore, $t_i \neq t_{i'}$ for all $i\neq i'$.
\end{rmk}

\subsection{Vanishing cycles for $n=3k$}
The process is analogous to Section~\ref{sec:vc_3k}.
Consider the two projection maps
\[ \Phi : \C_{(x,y)}^2 \to \C_{(X,y)}^2 \quad\text{and}\quad \pi_y : \mathbb{C}_{(X,y)}^2 \to \mathbb{C}_y\]
defined by $\Phi(x,y)=(x^2,y)=:(X,y)$ and $\pi_y(X,y)=y$.
Then \[\phi_n:\C_{(x,y)}^2 \arrowr \C_t\] is transformed under the map $\Phi$ into 
\[\psi_n: \C_{(X,y)}^2 \arrowr \C_t,\]
that is, 
\[\psi_n(X,y)=y(X-y^3-1)(y^n (X-y^3-1)^{2n}-{\ep_n^{2n}}).\]
Denote \[F_n^t=\phi_n^{-1}(t)\cap U, \quad f_n^t=\psi_n^{-1}(t)\cap \Phi(U), \quad\text{and}\quad \Phi_n^t=\Phi|_{F_n^t}: F_n^t \arrowr f_n^t.\]

Since $\psi_n^{-1}(t)$ is smooth for $t\neq 0$, each fiber $\psi_n^{-1}(t_i)$ is tangent to the $y$-axis, and such tangent point lifts to a Lefschetz singularity $\phi_n^{-1}(t_i)$. 
That is, the two branch points of $\Phi_n^t$ merge into a single point which becomes a critical point, 
as $t$ moves to a singular value.
The monodromy around such a singular value is given by the Dehn twist along an appropriate path connecting these two points.

The behavior of the movement of branch points of $\Phi_n^t$ is the same as Claim~\ref{claim:branch_move}.
Fix a regular value ${t_O}={\ep_n^{4n}}$ and a Hurwitz system $\gamma_{i,j}$ in $\C_t$ as~\eqref{eq:path_system}.

\begin{claim} \label{claim:overlap_2}
Let $y_j^0$ and $y_{i,j}^0$ be the solutions of \[(-y^3-1)(y^n(-y^3-1)^{2n}-{\ep_n^{2n}})=0,\]
that is, \[y=y_j^0:=z_6^{2j+1},\] and
\begin{align}\label{eq:refpoints}
y \approx y_{i,j}^0:= \begin{cases} z_6^{2j+1}(1+\frac{1}{3}z_{4n}z_{2n}^i {\ep_n}) & (k \text{ odd}) \\
 z_6^{2j+1}(1+\frac{1}{3}z_{2n}^i {\ep_n})& (k \text{ even}). \end{cases}
\end{align}
Then the branch points $y_j^0$ and $y_{i,j}^0$ merge at $y_{i,j}$ for each $i,j$ as we move in the fibers over the curve ${\gamma}_{i,j}$.
\end{claim}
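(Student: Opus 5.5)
I would follow the proof of Claim~\ref{claim:branch_move} almost verbatim, adjusting for the extra factor $y$ in $\psi_n$ and for the fact that the singular values $t_{i,j}$ now depend on $j$. For concreteness assume $k$ even (the odd case only inserts the factor $z_{4n}$). Define
\[y_{i,j}(s):=z_6^{2j+1}\Bigl(1+s\,\tfrac{c_n}{3}z_{2n}^i\ep_n\Bigr),\qquad 0\le s\le 1,\]
so that $y_{i,j}(0)=y_j^0$ and $y_{i,j}(1)=y_{i,j}$, the critical point of \eqref{eq:y_i,j, 3}. The points $(0,y_{i,j}(s))$ lie on the branch locus $x=0$ of $\Phi$. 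The plan is to show three things. First, $s\mapsto \phi_n(0,y_{i,j}(s))$ is a path from $0$ (near $t_O$) to $t_{i,j}$ that is homotopic to $\gamma_{i,j}$ relative to endpoints in the complement of the other singular values. Second, the branch point following $y_{i,j}(s)$ is $y_j^0$, whose $t$-value moves, while the branch point starting at $y_{i,j}^0$ stays essentially fixed. Third, the two therefore coincide at $s=1$.

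The key computation is the expansion of $\phi_n(0,y_{i,j}(s))$. Using $-y_{i,j}(s)^3-1\approx s\,c_n z_{2n}^i\ep_n$ and $y^n\approx z_6^{n(2j+1)}(1+s n\tfrac{c_n}{3}z_{2n}^i\ep_n)$, where $z_6^{n(2j+1)}=(-1)^k=1$ for $n=3k$ with $k$ even, I expect
\[\phi_n(0,y_{i,j}(s))\approx -s\,c_n\,\frac{2n+1-s^{2n}}{2n+1}\,z_6^{2j+1}z_{2n}^i\ep_n^{2n+1}\bigl(1+O(s\ep_n)\bigr).\]
The leading term is a positive real multiple of the leading term of $t_{i,j}$ in \eqref{eq:t_i, 3}. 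Its modulus is monotone in $s$, since $s(2n+1-s^{2n})$ is increasing on $[0,1]$. So the path is, to leading order, a radial segment in the direction $\arg t_{i,j}$, and it ends exactly at $t_{i,j}$ when $s=1$. Along the first half of $\gamma_{i,j}$ (the arc of radius $\ep_n^{4n}$), all branch points are essentially stationary, because $|t_O|\ll\ep_n^{2n+1}$. Along the radial part, the branch point $y_{i,j}^0$ stays within $O(\ep_n^{2n})$ of its position, because the factor $y^nf^{2n}-\ep_n^{2n}$ changes only by $O(t/f)$ there. The branch point $y_j^0$, near which $f\approx t/(-\ep_n^{2n}y)$, is instead pushed out along $y_{i,j}(s)$.

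The main obstacle is the clustering noted in the Remark: $t_{i,j}$, $t_{i+2k,j-1}$, $t_{i+4k,j+1}$ lie on a circle of radius $\tfrac{c_n\ep_n}{3}\cdot\tfrac{2nc_n}{2n+1}\ep_n^{2n+1}$ around a common center. A radial segment aimed at one of them may therefore pass within the error term of the others, so the leading-order computation alone does not separate the homotopy classes. For this step I would first bound the error of the approximation, using the estimates of the appendix, by $o(\ep_n^{2n+2})$, which is smaller than the cluster radius. I would then argue that the segment and $\gamma_{i,j}$ differ only inside a small disk containing the cluster. Finally, I would invoke Remark~\ref{rmk:vc_disjoint}: the vanishing cycles of clustered values are disjoint, so any ambiguity in how the path winds around the other two cluster points does not change the resulting vanishing cycle, nor the identification of the two merging branch points. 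With that, the merging of $y_j^0$ and $y_{i,j}^0$ at $y_{i,j}$ follows.
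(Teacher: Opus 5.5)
Your core argument is the paper's proof: the same path $y_{i,j}(s)$ from $y_j^0$ to $y_{i,j}$, and the same leading-order expansion of $\phi_n(0,y_{i,j}(s))$. Like the paper, you conclude that this expansion is dominated by the $\ep_n^{2n+1}$-term and so follows the radial part of $\gamma_{i,j}$. The paper stops there. It treats the clustering only in the Remark preceding this subsection, by the same appeal to Remark~\ref{rmk:vc_disjoint} that you make at the end.

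One of your supplementary steps is false, and it even contradicts the claim. The branch point starting at $y_{i,j}^0$ does not stay within $O(\ep_n^{2n})$ of its position: it must reach $y_{i,j}$, and $|y_{i,j}-y_{i,j}^0|=\frac{1-c_n}{3}\ep_n\approx\frac{\ln(2n+1)}{6n}\ep_n$. Your justification only shows that the factor $y^nf^{2n}-\ep_n^{2n}$ changes by $t/(yf)$. At $t=t_{i,j}$ this is about $-\frac{2n}{2n+1}\ep_n^{2n}$, which is not small compared with $\ep_n^{2n}$. So $y^nf^{2n}$ falls from $\ep_n^{2n}$ to about $\ep_n^{2n}/(2n+1)$, and $|f|$ falls from $\ep_n$ to $c_n\ep_n$.

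The correct picture, say for $k$ even, comes from the ray $f=\rho z_{2n}^i$. On it, to leading order, $\phi_n(0,y)\approx -z_6^{2j+1}z_{2n}^i\,\rho(\ep_n^{2n}-\rho^{2n})$. The function $\rho\mapsto\rho(\ep_n^{2n}-\rho^{2n})$ increases on $[0,c_n\ep_n]$, decreases on $[c_n\ep_n,\ep_n]$, and has maximum $|t_{i,j}|$. As $t$ runs out along the ray to $t_{i,j}$, the preimage starting at $\rho=0$ (that is, $y_j^0$) and the one starting at $\rho=\ep_n$ (that is, $y_{i,j}^0$) move toward each other. They meet at $\rho=c_n\ep_n$, which is $y_{i,j}$. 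Your path $y_{i,j}(s)$ is the first half of this picture; the second half is what actually identifies the partner branch point.

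Your proposed error control also fails as stated. The appendix only gives $|f-f_i|=O(n^{-2})=O(\ep_n^2)$. Since $n\ep_n=1/10$ is constant, this yields an error of order $\ep_n^{2n+2}$ in the singular values. That is the same order as the cluster radius, not $o(\ep_n^{2n+2})$, and the first neglected correction to $f$ really is of order $\ep_n^2$.

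Fortunately the step is unnecessary for this claim, which concerns only the roots of $\psi_n(0,y)=t$ near $z_6^{2j+1}$. The critical points of $t_{i+2k,j-1}$ and $t_{i+4k,j+1}$ lie near $z_6^{2j-1}$ and $z_6^{2j+3}$, at distance about $1$. The values $t_{i',j}$ with $i'\neq i$ do have critical points near $z_6^{2j+1}$, but they lie at angular distance about $\pi/n$, far outside the cluster. Hence the relevant roots do not branch at the other cluster points. The side on which the path passes those points therefore cannot change which two roots meet at $y_{i,j}$. The clustering matters only for the choice of the Hurwitz system, and there your fallback to Remark~\ref{rmk:vc_disjoint} is exactly what the paper does.
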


\begin{proof}
Define $y_{i,j}(s)$ by
\[ y_{i,j}(s)= \begin{cases} z_6^{2j+1}(1+s \frac{c_n}{3} z_{4n}z_{2n}^i {\ep_n}) & (k \text{ odd}) \\z_6^{2j+1}(1+s \frac{c_n}{3}z_{2n}^i {\ep_n}). & (k \text{ even}) \end{cases}\]
The path $y_{i,j}(s)$ connects $y_j^0$ to $y_{i,j}$.
We claim that the trajectory of $\phi_n(0,y_{i,j}(s))$ is sufficiently close to $\gamma_{i,j}$.
For simplicity of description, assume $k$ is even.
Since $-y_{i,j}(s)^3-1 \approx s \cdot cz_{2n}^i {\ep_n}$, we have
\begin{align*}
\phi_n(0,y_{i,j}(s)) &\approx z_{6}^{2j+1}(1+s \frac{c_n}{3}z_{2n}^i {\ep_n}) (s \cdot c_n z_{2n}^i {\ep_n}) \left( (1+sn \frac{c_n}{3}z_{2n}^i {\ep_n})s^{2n} \frac{1}{2n+1}{\ep_n^{2n}}-{\ep_n^{2n}}\right) \\
&\approx  -s \cdot c_n  z_{6}^{2j+1} z_{2n}^i{\ep_n^{2n+1}} \left( \frac{2n+(1-s^{2n})}{2n+1}+sc_n \frac{(2n+1)-(n+1)s^{2n}}{3(2n+1)} z_{2n}^i {\ep_n} \right).
\end{align*}
The value $\phi_n(0,y_{i,j}(s))$ is dominated by the ${\ep_n^{2n+1}}$- term, and hence sufficiently close to $\gamma_{i,j}$. 
This proves the claim.
\end{proof}

$\Phi_n^t$ is the hyperelliptic double branched cover.
We use one more projection map \[\pi_y:f_n^t \arrowr \C_y,\]
given by projection onto the $y$-coordinate.
By computing the discriminant of $\psi_n-t=0$, we can find the branch points of the map $\pi_y$.

\begin{lem}\label{lem:discriminant_2}
The discriminant $\Delta_y(t)$ of the polynomial equation $\psi_n(X,y)-t=0$ in $X$ is given by
\begin{align}
\Delta_y(t)=(-1)^{\frac{n(n-1)}{2}} y^{2n(n+1)}((2n)^{2n}{\ep_n^{2n(2n+1)}}y^n-(2n+1)^{2n+1} t^{2n}).
\end{align}
\end{lem}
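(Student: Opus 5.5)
The plan is to repeat the computation of Lemma~\ref{lem:discriminant_1} with the new coefficients. The only change from that lemma is the extra factor $y$ in $\psi_n$.

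First, set $a=-y^3-1$ and $b=\ep_n^{2n}$. Translating $X\mapsto X-a$ leaves the discriminant unchanged, so it suffices to compute the discriminant in $X$ of
\[F(X)=y(X)(y^nX^{2n}-b)-t=y^{n+1}X^{2n+1}-by\,X-t .\]
This is a trinomial $AX^{N}+BX+C$ with $N=2n+1$, $A=y^{n+1}$, $B=-by$ and $C=-t$. The derivative is $F'(X)=(2n+1)y^{n+1}X^{2n}-by$.

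Second, compute the resultant $R(F,F')$ from the $(4n+1)\times(4n+1)$ Sylvester matrix, exactly as before. Subtract $(2n+1)$ times each $F$-row from the corresponding $F'$-row; this produces rows with entries $2nby$ and $(2n+1)t$. Then expand along the $2n$ pivot entries $y^{n+1}$. The result should be
\[R(F,F')=y^{2n(n+1)}\cdot(\text{the }(2n+1)\times(2n+1)\text{ bidiagonal-plus-corner determinant}).\]
That remaining determinant has diagonal entries $2nby$ and superdiagonal entries $(2n+1)t$, with corner entries $(2n+1)y^{n+1}$ and $by$. It equals $(2n)^{2n}(by)^{2n}\cdot by-(2n+1)^{2n+1}y^{n+1}t^{2n}$ up to sign.

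Third, divide by the leading coefficient $y^{n+1}$ and attach the sign $(-1)^{n(n-1)/2}$ with the same convention as in Lemma~\ref{lem:discriminant_1}.

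As a cross-check, I would use the standard trinomial formula
\[\mathrm{Disc}(AX^N+BX+C)=\pm A^{N-2}\bigl(N^NAC^{N-1}+(-1)^{N-1}(N-1)^{N-1}B^N\bigr).\]
With our values this gives
\[\pm\,y^{(n+1)(2n-1)}\bigl((2n+1)^{2n+1}y^{n+1}t^{2n}-(2n)^{2n}b^{2n+1}y^{2n+1}\bigr).\]
Factoring out $y^{n+1}$ turns this into
\[\pm\,y^{2n(n+1)}\bigl((2n)^{2n}\ep_n^{2n(2n+1)}y^n-(2n+1)^{2n+1}t^{2n}\bigr).\]
This agrees with the statement of Lemma~\ref{lem:discriminant_2}. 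The same formula, applied to the data of Lemma~\ref{lem:discriminant_1}, reproduces the exponent $y^{2n^2-n}$ found there, which confirms the bookkeeping.

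The step I expect to be the main obstacle is tracking the powers of $y$ and the overall sign through the row reduction. The extra $y$ in the linear coefficient $B$ contributes $y^{2n+1}$ to the second term, while $A$ contributes only $y^{n+1}$ to the first. Extracting the common factor therefore shifts the exponent from $y^{2n^2-n}$ to $y^{2n(n+1)}$, and it moves $y^n$ from the $t^{2n}$-term onto the $\ep_n$-term. I would fix the sign simply by matching the convention already used in Lemma~\ref{lem:discriminant_1}, since only the zero locus in $y$ matters for locating the branch points $b_l(t)$ of $\pi_y$.
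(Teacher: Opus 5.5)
Your proposal is correct and follows the same route as the paper. You translate in $X$ to reduce to $F(X)=y^{n+1}X^{2n+1}-byX-t$, row-reduce the Sylvester matrix to get $R(F,F')=\pm\, y^{(n+1)(2n+1)}\bigl((2n)^{2n}b^{2n+1}y^{n}-(2n+1)^{2n+1}t^{2n}\bigr)$, and divide by the leading coefficient $y^{n+1}$.

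Your trinomial-discriminant cross-check is a welcome addition, since the paper simply asserts the resultant. You are also right that the overall sign is immaterial: only the zero locus in $y$ is used to locate the branch points $b_l(t)$.
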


\begin{proof}
The proof is analogous to that of Lemma~\ref{lem:discriminant_1}.
After a parallel transport along $X$, the equation $\psi_n(X,y)-t=0$ can be rewritten as
\[F(X):=y^{n+1}X^{2n+1}-byX-t=0,\]
where $b={\ep_n^{2n}}$.

Since the resultant $R(F,F')$ is
\[R(F,F')=y^{(n+1)(2n+1)}((2n)^{2n}b^{2n+1} y^{n}-(2n+1)^{2n+1}t^{2n}),\]
the discriminant polynomial 
$\Delta_y(t)=(-1)^{\frac{n(n-1)}{2}}y^{-n-1}R(F,F')$
yields the desired formula.
\end{proof}

From Lemma~\ref{lem:discriminant_2}, the branch points $b_l(t)$ of $\pi_y$ are
\begin{align}
b_l(t)=z_n^l D_n {\ep_n^{-2(2n+1)}}t^2,
\end{align}
where \[D_n=\left(\frac{2n+1}{2n}\right)^2 \sqrt[n]{2n+1}.\]

Note that $\pi_y$ is not a branched cover for $t\neq 0$, since $(y=0)\cap f_n^t =\varnothing$.
To solve this problem, we will extend the map $\pi_y$ to a larger fiber.
For convenience of description, we consider the map $\pi_{\hat{y}}: f_n^t \arrowr \C_{\hat{y}}$, where $\hat{y}=1/y$, defined by
\[\pi_{\hat{y}}(X,y)=1/y.\]
The branch points of $\pi_{1/y}$ are in one-to-one correspondence with those of $\pi_y$.
To make $\pi_y$ surjective, we extend the domain to a space corresponding to the fiber \[\phi_n^{-1}(t) \cap (U \cup U_3),\]
where $U_3=\cup_{j=1}^{2n+1} U_{3,j}$.

\begin{lem}
The map $\pi_{\hat{y}}$ extends to 
\[\hat{\pi}_{\hat{y}}: \hat{f}_n^t \arrowr \C_{\hat{y}},\]
where $\hat{f}_n^t \cong \C$ is an extension of $f_n^t$. Thus $\hat{\pi}_{\hat{y}}$ is a $(2n+1)$-fold branched cover, branched at $\hat{y}=1/b_l(t)$ and $\hat{y}=0$.
\end{lem}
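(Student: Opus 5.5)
The plan mirrors the proof of the analogous lemma in Section~\ref{sec:splitgen1}, where $\pi_y$ was extended using the charts $U_{1,j}$. This time we use the chain $U_3=\bigcup_{j=1}^{2n+1}U_{3,j}$ attached near infinity. We set
\[\hat{f}_n^t:=\Phi\bigl((U\cup U_3)\cap \phi_n^{-1}(t)\bigr),\]
where $\Phi$ acts on each $U_{3,j}$ as the quotient by the hyperelliptic involution $x\mapsto -x$; in the $U_{3,1}$ coordinates this involution is $u\mapsto -u$. We then express $\hat{y}=1/y$ in the coordinates $(\xi_j,\eta_j)$ of each chart. In $U_{3,1}$ we have $\xi_1=v=y/x$ and $\eta_1\approx g(u,v)/v^2$, while $x\to\infty$ near the end. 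From $\xi_1=y/x$ and $u=z/x$ we get $\hat{y}=u/(z\xi_1)$, so after passing to the double cover quotient $\hat{y}$ becomes a regular function on the chart.

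We then push $\hat{y}$ through the perturbed transition maps~\eqref{eq:charte2}, namely
\[(\xi_2,\eta_2)=(\eta_1^{-1},\eta_1(\xi_1\eta_1-\ep_n)),\qquad (\xi_{j+1},\eta_{j+1})=\bigl(\eta_j^{-1},\eta_j(\xi_j\eta_j+(z_{2n}^{j-2}-z_{2n}^{j-1})\ep_n)\bigr),\]
exactly as was done for $y$ in the $U_{1,j}$ chain. On the last chart $\eta_{2n+1}=t$ is fixed, so $\hat{y}$ becomes a polynomial in the single coordinate $\xi_{2n+1}$. Its factors should be $\xi_{2n+1}$ together with the linear factors $(\xi_{2n+1}t-(z_{2n}^{j-1}-z_{2n}^{j})\ep_n)$ coming from each transition, carrying appropriate exponents. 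The zeros of this polynomial are the points $O_k$ lying over $\hat{y}=0$. They correspond to $\xi_j=0$ in each $U_{3,j}$, because each new chart contributes one sheet near $\hat{y}=0$.

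The next step is to check that the total degree is $2n+1$. This agrees with the degree of $\psi_n-t$ in $X$, so the $2n+1$ sheets over a generic $\hat{y}$ are exactly those of $\pi_{\hat{y}}$ restricted to $f_n^t$, and the extension therefore adds precisely the fiber over $\hat{y}=0$. Properness then follows because the ends of $f_n^t$ with $y\to\infty$ are capped off by the $U_3$ chain. As a result $\hat{\pi}_{\hat{y}}$ is a finite branched cover of degree $2n+1$.

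Away from $\hat{y}=0$, the only branching happens where the discriminant in Lemma~\ref{lem:discriminant_2} vanishes. That occurs at $y=b_l(t)$, or equivalently $\hat{y}=1/b_l(t)$, and each of these is a simple root, so each gives one ramification point of index $2$. Over $\hat{y}=0$ the ramification comes from the multiplicities in the factorization found above. Applying Riemann--Hurwitz, as in condition~(ii) of Proposition~\ref{prop:condition}, to the compactification over $\hat{\C}_{\hat{y}}$ (or equivalently computing Euler characteristics) then gives $\chi(\hat{f}_n^t)=1$. Since $\hat{f}_n^t$ is connected and open, this yields $\hat{f}_n^t\cong\C$. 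As a cross-check, $\hat{f}_n^t$ double covers back to a fiber of genus $3n+1$ with one puncture, branched at $6n+3$ points.

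The main obstacle is the bookkeeping of multiplicities over $\hat{y}=0$. The order of vanishing of $\hat{y}$ along each $\xi_j=0$ is governed by the multiplicities of the chain of spheres of self-intersection $-2$ (from the continued fraction $[2,\dots,2]$ of the $A_{2n+1,2n}$ end) together with the root in~\eqref{eq:charte2}. These orders have to be tracked through the perturbed transitions, and also through the involution $u\mapsto-u$, which halves some of them. I would first do $n=1$ by hand to fix the pattern, and then carry out the induction on $j$ as in the proof for the $U_{1,j}$ chain.
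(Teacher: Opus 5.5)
Your outline (quotient by the involution, express $\hat y$ near infinity, count zeros over $\hat y=0$, Riemann--Hurwitz) is the paper's outline. However, the step you defer as bookkeeping is the whole content of the lemma, and the sketch you give for it does not work.

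\textbf{The involution does not act chart by chart.} It preserves $U_{3,1}$, where it is $(\xi_1,\eta_1)\mapsto(-\xi_1,\eta_1)$. There it exchanges the annuli $A_i=\{\xi_1\eta_1=z_{2n}^i\ep_n\}$ and $A_{i+n}$. Hence it also exchanges the caps $D_i\subset U_{3,i+2}$ and $D_{i+n}\subset U_{3,i+n+2}$, which lie in different charts. So $(\xi_j,\eta_j)$ for $j\ge 2$ are not coordinates on the quotient. Pushing $\hat y$ through \eqref{eq:charte2} therefore computes the pullback $\hat y\circ\hat\Phi_n^t$ on the double cover, not $\hat\pi_{\hat y}$.

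Concretely, $\hat y=z/y=u/v$ (your $u/(z\xi_1)$ mixes affine and homogeneous coordinates). Since $u\approx v^3$ near the point at infinity of $x^2z=y^3+z^3$, one gets $\hat y\approx\xi_1^2$. On the other hand, the transitions give $\xi_1=\xi_{2n+1}\prod_{k=1}^{2n}(\xi_{2n+1}\eta_{2n+1}+c_k)$ with distinct nonzero $c_k$. So with $\eta_{2n+1}=t$ you obtain a function of degree $4n+2$ with $2n+1$ double zeros, not the degree-$(2n+1)$ polynomial you predict.

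\textbf{The description of the fiber over $\hat y=0$ is wrong and contradicts the conclusion.} ``Each new chart contributes one sheet'' gives $2n+1$ unramified points. Then $\hat y=0$ would not be a branch point, and Riemann--Hurwitz over a disk would give $\chi=(2n+1)-n=n+1$, not $1$.

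The correct structure is as follows. Of the $2n+1$ upstairs zeros, exactly one, $\xi_1=0$ in $U_{3,1}$, is fixed by the involution. It becomes a point of index $1$, since there $\hat y\approx\Xi_1:=\xi_1^2$ vanishes simply. The other $2n$ zeros, namely $\xi_j=0$ in $U_{3,j}$ for $j\ge 2$, are swapped in pairs and become $n$ points of index $2$. Only then is the total ramification $n$ over the points $1/b_l(t)$ plus $n$ over $\hat y=0$, giving $\chi=(2n+1)-2n=1$.

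\textbf{How the paper does it.} It builds quotient charts. Starting from the downstairs fiber $\eta_1\prod_{i=0}^{n-1}(\Xi_1\eta_1^2-z_n^i\ep_n^2)=t$, it sets $(\Xi_2,\eta_2)=(\eta_1^{-1},\eta_1(\Xi_1\eta_1^2-\ep_n^2))$ and $(\Xi_{j+1},\eta_{j+1})=(\eta_j^{-1},\eta_j(\Xi_j\eta_j+(z_n^{j-2}-z_n^{j-1})\ep_n^2))$ for $2\le j\le n$. It shows $\hat y=\Xi_1/(\text{unit})$. It then expands $\Xi_1$ in terms of $\Xi_{n+1}$ as $\Xi_{n+1}^2$ times $n-1$ squared linear factors times one simple linear factor.

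Working upstairs would also be fine, but you would have to prove the fixed point and the pairing explicitly. The fiber multiplicities $2n,\dots,1$ of the $(-2)$-chain do not determine the vanishing orders of $\hat y$.
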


The strategy of the proof is as follows. 
First, we extend the hyperelliptic map $\Phi$ over the fiber \[\hat{F}_n^t:=(\phi_n |_{U\cup U_{3}})^{-1}(t).\] 
This allows us to define the corresponding extended fiber $\hat{f}_n^t$, which is an extension of $f_n^t$ up to homeomorphism.
Since $\hat{f}_n^t$ cannot be expressed directly by equations on the original charts, 
we introduce a new chart system that describes $\hat{f}_n^t$ for each $t$ as the zero set of polynomials.
We then transform the value $1/y$ (or $z/y$ near infinity) into a function of the variables in these new charts.
By analyzing the behavior as these values approach zero, we show that the local multiplicities of the functions are either $1$ or $2$.
These points correspond to the ramification points of $\hat{\pi}_{\hat{y}}$, 
which implies that $\hat{y}=0$ is the branch point of $\hat{\pi}_{\hat{y}}$.

\begin{proof}
Note that since the branch points of $\pi_y$ and $\pi_{\hat{y}}$ coincide, $\hat{y}=1/b_l(t)$ are indeed the branch points of $\pi_{\hat{y}}$.

Before constructing the extension $\hat{f}_n^t$, we extend the hyperelliptic action $\Phi_n^t: F_n^t \arrowr f_n^t$ to the charts $U_{3,j}$.
Under $\Phi_n^t$, the coordinate $u=z/x$ is identified with $-u$, while $u/v=y/z$ is fixed. 
This implies that $v$ is identified with $-v$, and consequently, $\xi_1$ is identified with $-\xi_1$.
Since $g(-u,-v)=g(u,v)$, the coordinate $\eta_1$ is fixed under the map $\Phi_n^t$.
Thus, $\Phi_n^t$ extends to $U_{3,1}$ as the map $\hat{\Phi}_n^t|_{U_{3,1}}: (\xi_1, \eta_1) \mapsto (\Xi_1=\xi_1^2, \eta_1)$.

Consider the fiber defined on $U_{3,1}$: 
\[\phi_n^{-1}(t)|_{U_{3,1}}=\{ \eta_1 \prod_{i=0}^{n-1}(\xi_1^2 \eta_1^2- z_{n}^i {\ep_n^{2}})=t\} . \]
This fiber is clearly preserved under $\Phi_n^t$.
Note that $\phi_n^{-1}(t)|_{U_{3,1}} \cong \phi_n^{-1}(0)|_{U_{3,1}}$ consists of a disk $(\eta_1=0)$ and 
$2n$ annuli $A_i:=(\xi_1 \eta_1=z_{2n}^i {\ep_n})$ for $i=0,\dots, 2n-1$. 
The annuli $A_i$ and $A_{i+n}$ are identified under the map $\Phi_n^t$.
Each annulus $A_i$ ($A_{i+n}$, resp.) is connected to a disk $D_i$ ($D_{i+n}$, resp.) in $U_{3,i+2}$ ($U_{3,i+n+2}$, resp.).
Consequently, we extend $\Phi_n^t$ to $\hat{\Phi}_n^t$ on $\cup_{i=1}^{2n+1} U_{3,i}$ by identifying $D_i$ with $D_{i+n}$ for each $i=0,\dots, n-1$.
We denote the identified annulus and disk by $\hat{A}_i$ and $ \hat{D}_i$, respectively.


To define a map $\hat{\pi}_{\hat{y}}:\hat{f}_n^t \arrowr \C_{\hat{y}}$ as polynomial equations, 
we introduce adjusted charts on $\cup_{i=1}^{2n+1} U_{3,i}$.
Since
\[\hat{f}_n^t\cap U_{3,1}=\{(\Xi_1, \eta_1)\mid \eta_1 \prod_{i=0}^{n-1} (\Xi_1 \eta_1^2 -z_n^i {\ep_n^{2}})=t  \},\]
we set $U'_{3,2}=\{(\Xi_2, \eta_2)\}$ with the transition map \[(\Xi_2, \eta_2)=(\eta_1^{-1}, \eta_1(\Xi_1 \eta_1^2 -{\ep_n^{2}})).\]
Inductively, for $j=2,\dots, n$, we define $U'_{3,j+1}=\{(\Xi_{j+1}, \eta_{j+1})\}$ with
\[(\Xi_{j+1}, \eta_{j+1})=(\eta_j^{-1}, \eta_j(\Xi_j \eta_j +(z_n^{j-2}-z_n^{j-1}){\ep_n^{2}})).\]
On each $U'_{3,j}$, $j=2, \dots, n$, the map $\psi_n$ is given by
\[(\psi_n)|_{U'_{3,j}}=\eta_j \prod_{i=j-1}^{n-1} (\Xi_j \eta_j+(z_n^{j-2}-z_n^{i}){\ep_n^{2}}),\]
so the disk $(\eta_j=0)$ corresponds to the disk $\hat{D}_{j-2}$.
Finally, the map $\psi_n$ on the chart $U'_{3,n+1}$ is given by
\[(\psi_n)|_{U'_{3,n+1}}=\eta_{n+1}.\]

Next, we express $z/y=u/v$ in each $U'_{3,j}$.
Since the transition map $(u,v)\mapsto (\xi_1=v, \eta_1)$ is a diffeomorphism, 
there exists a $C^\infty$-function $h(\xi_1, \eta_1)$ such that $(u,v)=(h(\xi_1, \eta_1), \xi_1)$.
Observing that \[\xi_1^2 \eta_1 \approx g(u,v)=\frac{1}{u^2} \left(1-\frac{v}{u}v^2 \right)-1=\frac{1}{h(\xi_1, \eta_1)^2}\left(1-\frac{v}{u}\xi_1^2\right)-1, \]
we have
\begin{align} \label{eq:u/v}
\frac{u}{v}=\frac{\xi_1^2}{1-(\xi_1^2 \eta_1+1)h(\xi_1, \eta_1)^2}.
\end{align}
Using the transition map from $(X, y)$ to $(\Xi_1, \eta_1)$, and letting \[\hat{h}(\Xi_1, \eta_1)={h}(\xi_1^2, \eta_1)^2,\]
we rewrite \eqref{eq:u/v} as
\begin{align} \label{eq:u/v_revised}
\hat{\pi}_{1/y}=\frac{1}{y}=\frac{u}{v}=\frac{\Xi_1}{1-(\Xi_1 \eta_1+1)\hat{h}(\Xi_1, \eta_1)}.
\end{align}
As $u \arrowr 0$ when $\xi_1 \arrowr 0$, the denominator of \eqref{eq:u/v_revised} remains non-zero, 
ensuring that $\hat{\pi}_{1/y}$ goes to $0$ linearly.
Furthermore, since
\begin{align*}
\Xi_1&= \Xi_2^2( \Xi_2 \eta_2 +{\ep_n^{2}}) \\
=&\Xi_3^2 (\Xi_3 \eta_3 - (1-z_n){\ep_n^{2}})^2(\Xi_3 \eta_3 +z_n {\ep_n^{2}}) \\
=&\cdots \\
=& \Xi_{n+1}^2 (\Xi_{n+1}\eta_{n+1}-(z_n^{n-2}-z_n^{n-1}){\ep_n^{2}})^2  \cdots  (\Xi_{n+1}\eta_{n+1}-(1-z_n)\ep_n^2)^2 (\Xi_{n+1}\eta_{n+1}+z_n^{n-1}{\ep_n^{2}}),
\end{align*}
the ramification points $O_{j-1}$ of $\hat{\pi}_{1/y}$ over $\hat{y}=0$ are given by $\Xi_{j}=0$ with index $2$ for $j=2,\dots, n+1$, and $\Xi_1=0$ with index $1$.
\end{proof}

Write $\hat{b}_l(t)=1/b_l(t)$ as
\[\hat{b}_l(t)=z_n^l D_n^{-1} {\ep_n^{2(2n+1)}} t^{-2},\]
so that $\arg{\hat{b}_l(t_O)} \approx 2\pi l /n$.


The lifting of the path $1/y_{i,j}(s)$ in $\C_{\hat{y}}$ to the fiber $\hat{f}_n^t$ is analogous to the construction in the preceding section.
Since the two points $(0,1/y_{i,j}^0)$ and $(0,1/y_j^0)$ lie on different sheets of the branched cover $\hat{\pi}_{\hat{y}}$, 
the path $1/y_{i,j}(s)$ must intersect the branch point $\hat{b}_l(\gamma_{i,j}(s))$ for some $s$ in $\C_{\hat{y}}$. 
Furthermore, there exists a unique integer $l=l(i,j)$ such that the branch point $\hat{b}_{l(i,j)}(\gamma_{i,j}(s))$ approaches $1/y_{i,j}(1)$ as $s$ goes to $1$.

\begin{lem}
For each $i,j$, there exists a unique integer $l= l(i,j)$ such that $\hat{b}_l(\gamma_{i,j}(1))$ is sufficiently close to $1/y_{i,j}(1)$, 
where $l(i,j)$ is given by the modular equation
\begin{align}
l(i,j)\equiv i+kj+\left[\frac{k+1}{2}\right] \pmod{n}.
\end{align}
\end{lem}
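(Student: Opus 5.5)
The plan is to compute both sides explicitly and compare their arguments, mirroring the corresponding claim of Section~\ref{sec:vc_3k}. First, I evaluate $\hat{b}_l$ at the endpoint $\gamma_{i,j}(1)=t_{i,j}$. By \eqref{eq:t_i, 3}, to leading order
\[
t_{i,j}\approx -\tfrac{2nc_n}{2n+1}\ep_n^{2n+1} z_6^{2j+1} w_i, \qquad w_i=z_{4n}z_{2n}^i \ (k\text{ odd}),\quad w_i=z_{2n}^i\ (k\text{ even}).
\]
Substituting into $\hat{b}_l(t)=z_n^l D_n^{-1}\ep_n^{2(2n+1)}t^{-2}$ gives
\[
\hat{b}_l(t_{i,j})\approx z_n^l\, D_n^{-1}\Bigl(\tfrac{2n+1}{2nc_n}\Bigr)^2 z_6^{-2(2j+1)} w_i^{-2}.
\]
The modulus factor simplifies: $D_n^{-1}=\bigl(\tfrac{2n}{2n+1}\bigr)^2(2n+1)^{-1/n}$ and $c_n^{-2}=(2n+1)^{1/n}$, so the modulus equals $1$. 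This matches $|1/y_{i,j}(1)|\approx 1$, and the problem reduces to matching arguments.

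Next, I match arguments against $1/y_{i,j}(1)\approx z_6^{-(2j+1)}$, since the $\ep_n$-correction is negligible at this scale. The requirement is
\[
z_n^l\,w_i^{-2}\,z_6^{-(2j+1)}=1 .
\]
For $k$ even, $w_i^{-2}=z_n^{-i}$, so I need $z_n^{l-i}=z_6^{2j+1}$. Writing $z_6=z_{3k}^{k/2}$ gives $l\equiv i+kj+k/2\pmod n$. For $k$ odd, $w_i^{-2}=z_{2n}^{-1}z_n^{-i}$, so $z_{2n}^{2l-2i-1}=z_{6k}^{k(2j+1)}$, i.e.\ $2l\equiv 2i+1+k(2j+1)\pmod{2n}$. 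Since $k+1$ is even, this yields $l\equiv i+kj+(k+1)/2\pmod n$. Both cases agree with $i+kj+[\frac{k+1}{2}]$, the same function as in Claim~\ref{claim:subsurface}.

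Uniqueness follows because the $n$ values $\hat{b}_l(t_{i,j})$ are spaced at angle $2\pi/n$ on a circle of radius about $1$. Their mutual distances are therefore of order $1/n$, while the error terms are $O(\ep_n)=O(1/(10n))$. Hence only one $l$ lies within the required proximity.

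The main obstacle is making ``sufficiently close'' rigorous: I must control the neglected $\frac{c_n\ep_n}{3}$ term in $t_{i,j}$ and the approximation errors in $c_n$ and $D_n$ uniformly in $n$. To handle this, I would bound the relative error of $t_{i,j}^{-2}$ by $O(\ep_n)$ and invoke the error estimates from the appendix. I would also check, as in Claim~\ref{claim:overlap_2}, that along $\gamma_{i,j}$ the branch point $\hat{b}_{l}$ actually meets the path $1/y_{i,j}(s)$ near $s=1$ rather than crossing some other $\hat{b}_{l'}$ earlier. This holds because $|\hat{b}_l(\gamma_{i,j}(s))|\gg 1$ for $s<1/2$ and decreases radially only in the final segment.
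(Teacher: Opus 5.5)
Your proposal is correct and follows the paper's argument: substitute the leading term of $t_{i,j}$ into $\hat{b}_l$, note that the modulus is $1$, and match the argument of $z_n^{l}w_i^{-2}z_3^{-(2j+1)}$ against $z_6^{-(2j+1)}$. You go slightly further than the paper, which writes out only the $k$ even case and does not justify uniqueness. One small correction to your uniqueness argument: the separation $2\sin(\pi/n)\ge 4/n$ and the accumulated error (roughly $c_n\ep_n<1/(10n)$) are both of order $1/n$, so it is the constants, not the orders, that make $l$ unique.
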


\begin{proof}
We show this for the case where $k$ is even.
Since \[t_{i,j}^2=D_n^{-1}{\ep_n^{2(2n+1)}} z_3^{2j+1}(z_n^i +2c_n {\ep_n}/3 z_{2n}^{3i}+ c_n^2{\ep_n^{2}}/9 z_n^{2i}) \approx D_n^{-1}{\ep_n^{2(2n+1)}} z_3^{2j+1}z_n^i, \]
the value of $\hat{b}_l(t_{i,j})$ is approximately given by
\[\hat{b}_l(t_{i,j}) \approx z_n^{l-(i+2kj+k)}.\]
Comparing this with $1/y_{i,j}(1) \approx z_6^{-(2j+1)}$, we obtain the relation 
\[2l \equiv 2i +2kj +k \pmod{2n},\]
which implies that \[l \equiv i +kj + \frac{k}{2} \pmod{n},\]
thus proving the lemma.
\end{proof}

\begin{rmk} \label{rmk:vc_disjoint}
The singular values $t_{i,j}, t_{i+2k, j-1}$, and $t_{i+4k, j+1}$ lie on the circle $|t-A_{i,j}|=\frac{c_n{\ep_n}}{3}$, which means that they are closely positioned relative to the other singular values. 
Since $l(i+2k,j-1)=l(i,j)+k$ and $l(i+4k, j+1)=l(i,j)-k$, the corresponding vanishing cycles pass through distinct ramification points $B_{l}$ corresponding to $b_l$.
This implies that these vanishing cycles do not intersect.
\end{rmk}

Let $c_{i,j}$ denote the vanishing cycle in $\hat{F}_n^{t_O}$ corresponding to the singular value $t_{i,j}$, 
and let $\tilde{c}_{i,j}$ be the image of $c_{i,j}$ under the map $\hat{\Phi}_n^{t_O}$.
This curve $\tilde{c}_{i,j}$ connects the points $a_{i,j}:=(0,{y}_{i,j}^0)$ and $a_j:=(0,{y}_j^0)$. 
Since $\hat{\Phi}_n^{t_O}$ is a hyperelliptic map branched at $a_{i,j}, a_j$, and $O_n$, the curve $\tilde{c}_{i,j}$ can be uniquely lifted to $c_{i,j}$. 
Thus, the main task is to characterize the curve $\tilde{c}_{i,j}$.

The curve $\tilde{c}_{i,j}$ is determined by the trajectory of the branch points $\hat{b}_l(\gamma_{i,j})$. 
Figure~\ref{fig:move_branchpt2} illustrates the movement of $\hat{b}_{l(i,1)}(\overline{\gamma}_i)$ in the $\C_{\hat{y}}$-plane.

\begin{figure}[ht]
\centering
\includegraphics{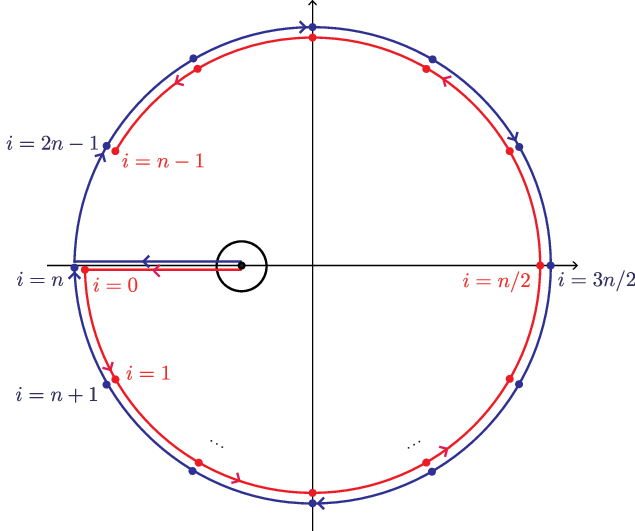}
\caption{Movement of $\hat{b}_{l(i,1)}(\overline{\gamma}_i)$. The blue arrow represents the curve for $n\le i\le 2n-1$, and the red for $0 \le i \le n-1$}
\label{fig:move_branchpt2}
\end{figure}

Note that the configuration of moving branch points is similar to that in Figure~\ref{fig:move_branchpt}, though the indices differ. 
Specifically, the winding angle of $b_{l(i,1)}$ has the maximum at $i=n$, and the minimum at $i=0$.
Considering the movement of $b_{l(i,j)}$, the branched cover structure $\hat{\pi}_{\hat{y}}:\hat{f}_n^{t_O} \arrowr \C_{\hat{y}}$ is depicted in Figure~\ref{fig:branched_cover_n_2}.

\begin{figure}[ht]
\includegraphics[scale=0.6]{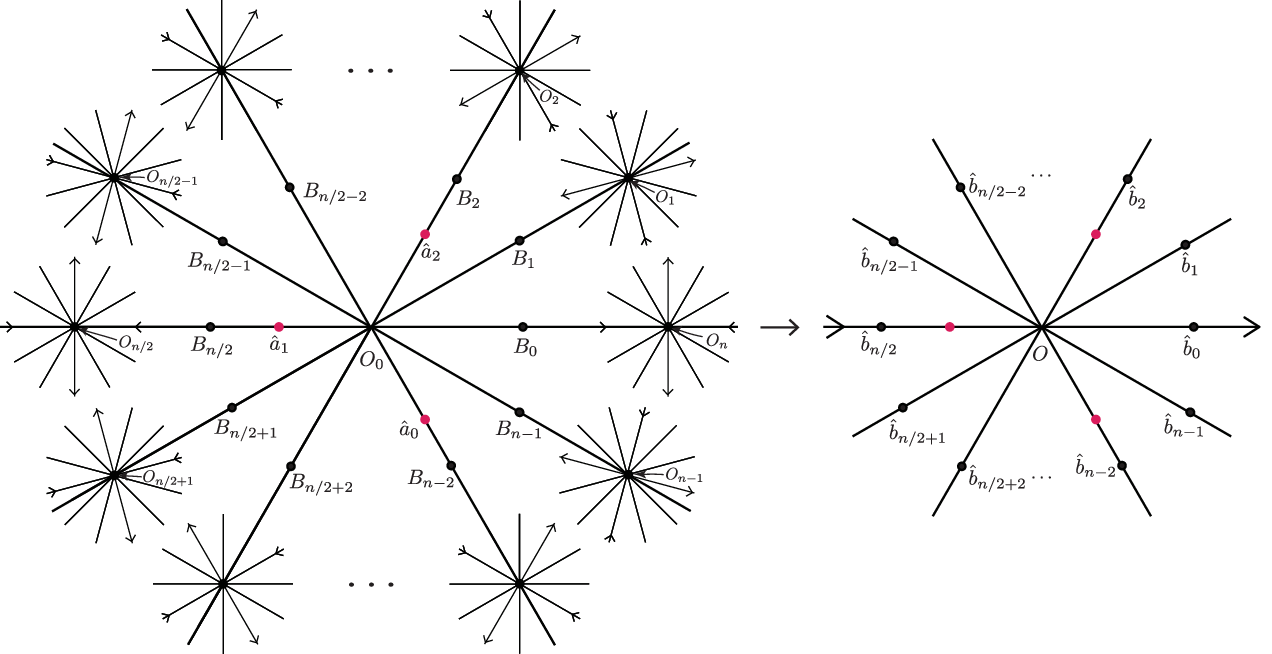}
\caption{Branched cover $\hat{\pi}_{\hat{y}}: \hat{f}_n^{t_O} \arrowr \C_{\hat{y}}$.}
\label{fig:branched_cover_n_2}
\end{figure}

Up to a re-indexing, the branched cover structure $\hat{\pi}_{\hat{y}}$ and the curves $\tilde{c}_{i,j}$ are identical to those in the $\sigma_n^{-1}$ case.
However, the vanishing cycles $c_{i,j}$ themselves are distinct, since $O_0$ is a branch point for $\hat{\Phi}_n^{t_O}: \hat{F}_n^{t_O} \arrowr \hat{f}_n^{t_O}$, 
though it is not involved in the $\sigma_n^{n-1}$-case.

Finally, we observe that $\tilde{c}_{i,j}, \tilde{c}_{i+2k, j-1}$, and $\tilde{c}_{i+4k, j+1}$ are mutually disjoint. 
Consequently, the Dehn twists along these curves commute with each other.

\begin{proof}[Proof of ~\eqref{eq:genfact_2}, $n=3k$ case]
Apply a perturbation to $\hat{f}_n^{t_O}$ such that all $\Phi_n^{t_O}$-branch points are aligned in a line.
Under this perturbation, $B_{l}$ moves clockwise around $O_0$
such that the points $B_{n/2-1}, B_{n/2-2}, \dots, B_0, B_{n-1},\dots B_{n/2}$ are aligned on the line, ordered from left to right.
Also, in this perturbation, the points $a_{i,1}, a_{i-k,2}, a_{i-2k,0}, a_{i-3k,1}, a_{i-4k,2}$, and $a_{i-5k, 0}$ (sharing the same $l(i,j)$) 
are aligned from left to right.

Considering the Hurwitz system $\gamma_{i,j}$ ordered counter-clockwise starting from $\gamma_{3k,1}$, 
we obtain the factorization
\begin{align*}
&\Phi_O \cdot \left( \prod_{i\in I_2} t_{\tilde{c}_{i,1}}t_{\tilde{c}_{i-2k,2}}t_{\tilde{c}_{i-4k,0}} \right) \cdots
\left( \prod_{i\in I_0} t_{\tilde{c}_{i,1}}t_{\tilde{c}_{i-2k,2}}t_{\tilde{c}_{i-4k,0}} \right)\cdot \\
&\left( \prod_{i\in I_5} t_{\tilde{c}_{i,1}}t_{\tilde{c}_{i-2k,2}}t_{\tilde{c}_{i-4k,0}} \right) \cdots
\left( \prod_{i\in I_3} t_{\tilde{c}_{i,1}}t_{\tilde{c}_{i-2k,2}}t_{\tilde{c}_{i-4k,0}} \right), 
\end{align*}
where $I_\iota=[\iota k,(\iota+1)k) \cap \Z$, and the order of each product is taken in the reverse order relative to the standard one.
Let 
\begin{align}\Phi_{\iota,j}:=\prod_{i \in I_\iota} t_{\tilde{c}_{i, j}}.
\end{align} 
Using the commutativity of Dehn twists along non-intersecting curves, we rearrange the factorization as
\begin{align*}
&\Phi_O \cdot (\Phi_{4,0} \Phi_{0,2}\Phi_{2,1})(\Phi_{3,0} \Phi_{5,2}\Phi_{1,1})(\Phi_{2,0} \Phi_{4,2}\Phi_{0,1})
(\Phi_{1,0} \Phi_{3,2}\Phi_{5,1})(\Phi_{0,0} \Phi_{2,2}\Phi_{4,1})(\Phi_{5,0} \Phi_{1,2}\Phi_{3,1})\\
=&\Phi_O \cdot (\Phi_{4,0} \Phi_{0,2}\Phi_{2,1})(\Phi_{3,0} \Phi_{5,2}\Phi_{1,1})(\Phi_{2,0} \Phi_{4,2}\Phi_{0,1})
(\Phi_{1,0} \Phi_{3,2}\Phi_{5,1})( \Phi_{2,2}\Phi_{4,1})(\Phi_{3,1})(\Phi_{0,0}\Phi_{5,0} \Phi_{1,2}).
\end{align*}
Since $\Phi_{i,j}$ commutes with $\Phi_{i-3, j-1}$, we rearrange it as
\[\Phi_O \cdot (\Phi_{4,0})( \Phi_{0,2}\Phi_{5,2})(\Phi_{2,1}\Phi_{1,1}\Phi_{0,1})(\Phi_{3,0} \Phi_{2,0}\Phi_{1,0} )
(\Phi_{4,2} \Phi_{3,2} \Phi_{2,2})(\Phi_{5,1}\Phi_{4,1}\Phi_{3,1})(\Phi_{0,0}\Phi_{5,0} \Phi_{1,2}).\]

Applying a Hurwitz move to $\Phi_{0,0}\Phi_{5,0} \Phi_{1,2}$, we obtain 
\[\Phi_O (\Phi'_{0,0}\Phi'_{5,0} \Phi'_{1,2}) \cdot (\Phi_{4,0})( \Phi_{0,2}\Phi_{5,2})(\Phi_{2,1}\Phi_{1,1}\Phi_{0,1})(\Phi_{3,0} \Phi_{2,0}\Phi_{1,0} )
(\Phi_{4,2} \Phi_{3,2} \Phi_{2,2})(\Phi_{5,1}\Phi_{4,1}\Phi_{3,1}),\]
where $\Phi'_{i,j}:=\Phi_O^{-1} \Phi_{i,j}\Phi_O$.
Since $\Phi'_{1,2}$ commutes with $\Phi_{4,0}$, we finally have
\begin{align}
\Phi_O (\Phi'_{0,0}\Phi'_{5,0} \Phi_{4,0})(\Phi'_{1,2} \Phi_{0,2}\Phi_{5,2})(\Phi_{2,1}\Phi_{1,1}\Phi_{0,1})(\Phi_{3,0} \Phi_{2,0}\Phi_{1,0} )
(\Phi_{4,2} \Phi_{3,2} \Phi_{2,2})(\Phi_{5,1}\Phi_{4,1}\Phi_{3,1}).
\end{align}
After reindexing the curves $\tilde{c}_{i,j}$ and $\tilde{c}'_{i,j}$, where $\tilde{c}'_{i,j}=\Phi_O^{-1}(\tilde{c}_{i,j})$, and 
taking the global conjugation, which is the negative Dehn twist along a curve $d$, we obtain \eqref{eq:genfact_2} with curves depicted in Figure~\ref{fig:vc}.
To find the global conjugation, see the proof of Theorem~\ref{thm:comb_2}.
\end{proof}

\subsection{Vanishing cycles for $n\neq 3k$}
As in the $n=3k$ case, we will show that along the path system $\gamma_i:=\gamma_{i, j(i)}$, defined in \eqref{eq:path_system}, 
the branch point $y_{j(i)}^0$ moves to $y_{i}$.

Recall that $t_i$ is given in \eqref{eq:t_i, 2}.
If we define $\theta_i=6n\arg(t_i)$, then we have 
\begin{align} \label{eq:arg_t_i}
\theta_i\equiv i+2nj(i)+n \pmod{6n}.
\end{align}
Note also that by choosing a reference fiber $f_n^{t_O}$, where $t_O={\ep_n^{4n}}$, sufficiently close to the fiber over $t=0$, 
so that the intersection $f_n^{t_O}\cap \{X=0\}$ is given as the solution to $\psi_n(0,y)=0$:
$y_j^0=z_6^{2j+1}$, and 
\[y_{i,j(i)}^0=z_6^{2j(i)+1} \cdot 
\begin{cases} 1+\frac{1}{3} z_{12n}z_{6n}^i {\ep_n} & (n: \text{odd}) \\ 1+\frac{1}{3}z_{6n}^i {\ep_n} & (n: \text{even}) 
\end{cases}\]

\begin{claim}
The points $y_{j(i)}^0$ and $y_{i,j(i)}^0$ merge as they move along the curve $\gamma_i$.
\end{claim}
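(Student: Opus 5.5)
The argument follows Claim~\ref{claim:overlap} and Claim~\ref{claim:overlap_2}: I exhibit an explicit straight path in $\C_y$ joining the two branch points and check that its image under $\phi_n$ stays close to $\gamma_i$. For $n=2m+1$ set
\[y_i(s)=z_6^{2j(i)+1}\left(1+s\,\frac{c_n}{3}z_{12n}z_{6n}^i\ep_n\right),\qquad 0\le s\le 1,\]
and drop the factor $z_{12n}$ when $n=2m$. By construction $y_i(0)=y^0_{j(i)}$ and $y_i(1)=y_i$, the critical point from the Proposition. To first order $-y_i(s)^3-1\approx s\,c_n z_{12n}z_{6n}^i\ep_n$, so $f(0,y_i(s))$ is this linear expression in $s$, up to errors of order $\ep_n^2$.

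Next I substitute into $\phi_n(0,y)=y f(y^n f^{2n}-\ep_n^{2n})$ and keep the leading term. This gives
\[y_i(s)^n\approx z_6^{(2j(i)+1)n}\bigl(1+sn\tfrac{c_n}{3}z_{12n}z_{6n}^i\ep_n\bigr),\qquad f^{2n}\approx \frac{s^{2n}}{2n+1}\,z_6 z_3^i\,\ep_n^{2n}\quad (n \text{ odd}).\]
The step that needs care is the phase. The congruence $2nj(i)\equiv i+m+1\pmod 3$ from Lemma~\ref{lem:j(i)} makes $z_6^{(2j(i)+1)n}z_6z_3^i=1$, so the product $y^nf^{2n}$ is real and positive, namely $s^{2n}\ep_n^{2n}/(2n+1)$. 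Hence the bracket is approximately $-\ep_n^{2n}\bigl(1-s^{2n}/(2n+1)\bigr)$, a negative real number times $\ep_n^{2n}$. The prefactor $y\,f$ equals $z_6^{2j(i)+1}s\,c_n z_{12n}z_{6n}^i\ep_n$ to leading order. Altogether
\[\phi_n(0,y_i(s))\approx -s\,c_n\,\frac{2n+1-s^{2n}}{2n+1}\,z_6^{2j(i)+1}z_{12n}z_{6n}^i\,\ep_n^{2n+1}.\]
Its argument is constant in $s$, equal to $\arg t_i$ by \eqref{eq:arg_t_i}. Its modulus increases monotonically from $0$ to $|t_i|$: the derivative of $s(2n+1-s^{2n})$ is $(2n+1)(1-s^{2n})\ge 0$. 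The even case is the same computation with $m$ in place of $m+1$.

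The main obstacle is justifying that the discarded terms do not change the homotopy class of the path. Every correction is of relative size $O(\ep_n)$ compared with the $\ep_n^{2n+1}$ term, and $\ep_n=(10n)^{-1}$, so the image is a nearly radial segment ending at $t_i$. The segment near $s=0$ is handled separately: the image passes near $0$ while $\gamma_i$ starts on the circle $|t|=\ep_n^{4n}$. Along that circular arc the branch points $y^0_{j}$ and $y^0_{i,j(i)}$ are essentially stationary, as already noted for \eqref{eq:path_system}. So the image path and $\gamma_i$ are homotopic rel endpoints in the complement of the other singular values. This uses that all $t_{i'}$ with $i'\ne i$ have different arguments, as shown in the Remark on distinctness of the $t_i$.

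Finally, the endpoint $y_i(1)$ is a genuine double point: it is the critical point, where the two branch points of $\Phi_n^t$ coincide. Therefore the pair $y^0_{j(i)}$, $y^0_{i,j(i)}$ merges over $\gamma_i$, and the path $y_i(s)$ traces the corresponding vanishing arc. For the rigorous error bounds I would appeal to the same estimates as in the appendix.
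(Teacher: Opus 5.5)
Your proposal is correct and is the paper's proof. The paper uses exactly this path $y_i(s)$ and only asserts that a calculation analogous to Claims~\ref{claim:overlap} and~\ref{claim:overlap_2} puts $\phi_n(0,y_i(s))$ near $\gamma_i$; your phase cancellation via $2nj(i)\equiv i+m+1 \pmod 3$ and the monotonicity of $s(2n+1-s^{2n})$ carry that calculation out explicitly. Two small remarks: your leading term correctly carries a minus sign, which matches $t_i=\phi_n(0,y_i)$ but differs by $\pi$ from the plus-sign formula behind \eqref{eq:arg_t_i}, so justify the argument of the segment by $y_i(1)$ being the critical point rather than by citing that equation; and the partner point $y^0_{i,j(i)}=y_i(1/c_n)$ is tracked by the same computation on $s\in[1,1/c_n]$, where the image retraces the segment back to $0$.
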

\begin{proof}
Define a path $y_{i}(s)$ by
\[y_{i}(s)=z_6^{2j(i)+1} \cdot 
\begin{cases} 1+s \frac{c_n}{3} z_{12n}z_{6n}^i {\ep_n} & (n: \text{odd}) \\ 1+s \frac{c_n}{3}z_{6n}^i {\ep_n} & (n: \text{even}). \end{cases}\]
Then the path $y_{i}(s)$ connects $y_{j(i)}^0$ and $y_{i,j(i)}$.
A calculation analogous to those in Claim~\ref{claim:overlap} and Claim~\ref{claim:overlap_2} proves that the trajectory of $\phi_n(0,y_i(s))$ is sufficiently close to $\gamma_{i,j(i)}$.
\end{proof}

Since $y_{j(i)}^0$ and $y_{i,j(i)}^0$ lie on different sheets, there exists a branch point $b_l(\gamma_i(s))$ that passes through $y_i (s)$ for some $s$.
Thus, we must track the trajectory of $b_l$ to determine the vanishing cycles.
As in the preceding subsection, we analyze $\hat{b}_l$.
Note that 
\begin{align*}
\hat{b}_l(t_i)\approx \begin{cases} z_n^l z_{6n}^{-1} z_{3n}^{-i} z_3^{-(2j(i)+1)} & (n: \text{odd}) \\
z_n^l z_{3n}^{-i} z_3^{-(2j(i)+1)}. & (n: \text{even}) \end{cases}
\end{align*}
Since this value must be sufficiently close to $z_6^{-(2j(i)+1)}$, we obtain
\[6l\equiv 2i+ 2nj(i)+2\left[\frac{n+1}{2}\right] \pmod{6n},\]
which simplifies to
\begin{align} \label{eq:3l_equation}
3l \equiv i+nj(i)+\left[\frac{n+1}{2}\right] \pmod{3n}.
\end{align}
In Claim \ref{claim:subsurface2}, we have proved that such an integer $l$ is uniquely determined modulo $n$.
Consequently, there exists a well-defined function $l(i)\in \{0,\dots, n-1\}$ satisfying \eqref{eq:3l_equation}.

Since the functions $j(i), l(i)$ are identical to those in the $\sigma_{2n+1}^{2n}$ case, we refer to Table~\ref{tab:data} for the values of $j(0), l(0), \Delta j, \Delta l$.
However, a complicated behavior arises because the singular values $t_i$ are not arranged in cyclic order with respect to the index $i$.
In particular, when $n\equiv 1 \pmod 3$, $\theta_{i+4n+1}$ is given as
\begin{align*}
\theta_{i+4n+1} & = i+4n+1+2nj(i+4n+1)+n \\
			& \equiv i+4n+2n(j(i)-1-1)+n+1 \pmod{3n} \\
			& \equiv \theta_i +1 \pmod{3n}
\end{align*}
which implies that $t_{i+4n+1}$ is adjacent to $t_{i}$.
A similar calculation shows that when $n\equiv 2 \pmod 3$, $t_{i+2n+1}$ is adjacent to $t_i$.
Since we take a Hurwitz system on $\C_t$ in the counter-clockwise direction starting from $t_0$, 
the ordering of the indices of $t_i$ in this Hurwitz system should be considered when finding the corresponding monodromy factorization.

\begin{proof}[Proof of \eqref{eq:genfact_2}]

The proof is analogous to that of \eqref{eq:genfact_1}.
The only difference is that the singular values are not ordered cyclically according to the index $i$.

Define
\[
r=
\begin{cases}
4n+1 & (n\equiv 1 \pmod 3),\\
2n+1 & (n\equiv 2 \pmod 3).
\end{cases}
\]
As observed above, $t_{i+r}$ is adjacent to $t_i$ in the cyclic ordering of singular values.
Let
\[
i_q \equiv i_0+qr \pmod{6n},
\]
where $i_0$ is chosen so that $\arg t_{i_0}=-\pi$.
Re-index the vanishing cycles according to this cyclic order.

After this re-indexing, the local configuration of the vanishing arcs and the disjointness relations among the corresponding vanishing cycles are identical to those appearing in the proof of \eqref{eq:genfact_1}.
In particular, the same block decomposition of the monodromy factorization is obtained, and the same commutation relations between Dehn twists hold.

Therefore, the identical sequence of commutations and Hurwitz moves used in the proof of \eqref{eq:genfact_1} applies here as well.
This produces the desired factorization up to a global conjugation.

Finally, applying the global conjugation given by the left-handed Dehn twists along the curves $d$ and $d'$, we obtain \eqref{eq:genfact_2}.
\end{proof}

Finally, we can figure out the possibly involved global conjugation, using relations in the mapping class groups.
\begin{thm}\label{thm:comb_2}
There is a word factorization of $\sigma_{2n+1}$
\begin{align} \label{eq:comb_2}
\sigma_{2n+1}=(t_{d} t_{d'})^2 \left(\prod_{j=12}^7 \prod_{i=n}^1 t_{c_{i,j}} \right) \cdot \left( \prod_{i=1}^n t_{d_{i}}t_{d'_{i}} \right).
\end{align}
Moreover, after applying the left-handed Dehn twists along the curves $d$ and $d'$ to each vanishing cycle obtained in the geometric method,
the monodromy factorization agrees with \eqref{eq:comb_2}.
\end{thm}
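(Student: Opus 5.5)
Following the proof of Theorem~\ref{thm:comb_1}, I would start from the chain factorization of the periodic map and rewrite it with braid and chain relations. By \cite{I}, $\sigma_{2n+1}^{2n}=(t_1\cdots t_{6n+2})^6$ in the mapping class group of $\Sigma_{3n+1,1}$, where $t_1,\dots,t_{6n+2}$ are the Dehn twists along the standard chain. The chain relation gives $(t_1\cdots t_{6n+2})^{6n+3}=t_\partial$, so up to the boundary twist, which is central and is the ambiguity absorbed by fiber automorphisms, $\sigma_{2n+1}=\sigma_{2n+1}^{-2n}$ is represented by $(t_1\cdots t_{6n+2})^{6n+3-6}=(t_1\cdots t_{6n+2})^{6n-3}$. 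This word is too long to match \eqref{eq:comb_2} directly. I would instead work with $(t_1\cdots t_{6n+2})^{6}$ and its inverse, and use the complementary grouping suggested by the singular fiber $F^2_n$.

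I would keep the notation $a_i=t_{6i-5}\cdots t_{6i-1}$, $b_i$, and $c=t_{6n+1}t_{6n+2}$ from the proof of Theorem~\ref{thm:comb_1}. As there, each six-fold power $a_i^6$ is the product of the two boundary twists of a genus-two subsurface, giving $t_{d_i}t_{d'_i}$ by the even chain relation, and $c^6$ gives $t_\delta$. For \eqref{eq:comb_2}, the block $\prod_{i=1}^n t_{d_i}t_{d'_i}$ appears at the right. The new factor $(t_dt_{d'})^2$ should come from the two-chain at the $\tilde F_2$ side: the torus of self-intersection $-1$ meets the $(-2)$-spheres $E_{i,1}$ of the two $[2,n+1]$ strings. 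I would identify $d,d'$ as the boundary curves of a neighbourhood of a $4$-chain $t_{6n-1}t_{6n}t_{6n+1}t_{6n+2}$ (or a similar chain through $c$). I would then use $(t_{x_1}\cdots t_{x_4})^{10}=t_dt_{d'}$, or a lantern/star-type substitution, to trade powers of $c$ and $t_{6i}$ for $(t_dt_{d'})^2$.

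Concretely, the plan is as follows. First, write $\sigma_{2n+1}=\sigma_{2n+1}^{-2n}\cdot\sigma_{2n+1}^{2n+1}$, that is, $\sigma_{2n+1}$ times the identity, and use \eqref{eq:monofacgen1} to express $\sigma_{2n+1}$ as $\bigl(t_\delta\prod(t_{d_i}t_{d'_i})\prod_{j=6}^{1}\prod_i t_{c_{i,j}}\bigr)^{-1}$ times a positive relator. Second, supply that positive relator by the genus-$(3n+1)$ chain relation $(t_1\cdots t_{6n+2})^{6n+3}=1$ in the closed mapping class group. Third, cancel the inverse word against it using the same conjugation bookkeeping $x^{c^{-j}a_i^{-j+1}b_i^{-1}}$. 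This leaves the conjugates $c_{i,j}$ for $j=7,\dots,12$, defined by the analogous formula with exponents $j-6$ shifted by one full period of $c$. It also leaves leftover twists along $d,d'$ from $c^{6}$ recombined with the last chain elements. Finally, check that the resulting word is \eqref{eq:comb_2}, and that the curves agree with the geometric vanishing cycles after applying $t_d^{-1}t_{d'}^{-1}$. For the latter I would compare the lifts of the arcs in Figure~\ref{fig:vc} with the images of $t_{6i}$ under the conjugating words.

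The main obstacle is the middle bookkeeping step: producing exactly two copies of $t_dt_{d'}$ and exactly $6n$ conjugates, with no leftover negative twists. I expect to try first a direct braid-relation rewriting of $(a_1\cdots a_n\,(6n)\cdots 6^{b_1^{-1}}\,c)^{6}$ with one extra factor of $c$ per period. Failing that, I would pass to the hyperelliptic quotient disk of Figure~\ref{fig:hyperelliptic} and verify the identity as a braid relation in $B_{6n+3}$, where the half-twist computations are mechanical.
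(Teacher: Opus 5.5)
Your proposal has a genuine gap, and it begins with the relation it is built on. For the chain of $2g=6n+2$ curves in $\Sigma_{3n+1,1}$, the chain relation is $(t_1\cdots t_{6n+2})^{12n+6}=t_\partial$. The power $(t_1\cdots t_{6n+2})^{6n+3}$ is a lift of the hyperelliptic involution $\iota$: it is not $t_\partial$, and it is not $1$ in the closed mapping class group. Consequently $(t_1\cdots t_{6n+2})^{6n-3}$ represents $\iota\,\sigma_{2n+1}$, an element of order $2(2n+1)$, rather than $\sigma_{2n+1}$. The ``positive relator'' in your concrete plan is therefore not a relator. With the correct exponent you get $\sigma_{2n+1}=(t_1\cdots t_{6n+2})^{-6}=(t_1\cdots t_{6n+2})^{12n}$, which is exactly the word the paper starts from. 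A letter count confirms this: once $t_dt_{d'}$ and $t_{d_i}t_{d'_i}$ are expanded into chain generators, \eqref{eq:comb_2} has length $2\cdot 6n(6n-1)+30n+6n=12n(6n+2)$. So your word is too short, not too long. The detour through the inverse of \eqref{eq:monofacgen1} only reproduces this same word, so it buys nothing. Your identification of $d,d'$ is also wrong. A $4$-chain has connected boundary, so $(t_{x_1}\cdots t_{x_4})^{10}$ is a single boundary twist. In the paper, $d,d'$ are the two boundary curves of a neighbourhood of the odd chain $c_1,\dots,c_{6n-1}$, with $t_dt_{d'}=(t_1\cdots t_{6n-1})^{6n}$. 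Likewise, $a_i^6=t_{d_i}t_{d'_i}$ is the odd-chain relation for a $5$-chain, not an even one.

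Beyond these errors, the step you call the main obstacle is the entire content of the proof, and it is left undone. The missing idea is a direct braid-relation rewriting of $C_{6n+2}^{12n}=C_{6n+2}^{6n-2}\,C_{6n+2}^{6n+2}$, where $C_m=t_1\cdots t_m$ and $\tilde C_m=t_m\cdots t_1$. First write $C_{6n+2}^{6n+2}=C_{6n+1}C_{6n}\,C_{6n-1}^{6n}\,\tilde C_{6n}\tilde C_{6n+1}\tilde C_{6n+2}$ and replace $C_{6n-1}^{6n}$ by $t_dt_{d'}$. Braid relations then peel off three twists along images of chain curves, $c'_{1,7},c'_{1,8},c'_{1,9}$, each followed by $\tilde C_{6n-1}$, leaving $C_{6n+2}^2$ in front. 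The same manipulation applied to $C_{6n+2}^{6n}\,t_dt_{d'}$ produces the second factor $t_dt_{d'}$ together with $c'_{1,10},c'_{1,11},c'_{1,12}$. Next, pull the letters $t_{6i}$ out of $\tilde C_{6n-1}$ to expand each $t_{c'_{1,j}}\tilde C_{6n-1}$ as $t_{c'_{n,j}}\cdots t_{c'_{1,j}}\,a_n\cdots a_1$, with $a_i=t_{6i-1}\cdots t_{6i-5}$. The six copies of $a_n\cdots a_1$ are pushed to the right using that $c'_{i,j}$ is disjoint from $a_k$ for $k\neq i$; this is what defines $c_{i,j}=a_{n-i+1}^{12-j}(c'_{i,j})$. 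Finally, $(a_n\cdots a_1)^6=\prod_i a_i^6=\prod_i t_{d_i}t_{d'_i}$. Your fallback of verifying the identity in the braid group via the hyperelliptic quotient could in principle work. As written, however, it is only a suggestion, not an argument.
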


\begin{proof}
From \cite{I}, a monodromy factorization of $\sigma_{2n+1}$ is known as \[\sigma_{2n+1}=(1\cdots (6n+2))^{12n}.\]
Write $C_n=(1\cdot 2 \cdots n)$, $\tilde{C}_n=(n \cdot (n-1) \cdots 1)$.
Then by the odd chain relations, we have 
\begin{equation}
\begin{aligned} \label{eq:dd'}
t_{d}t_{d'}&= C_{6n-1}^{6n}\\
t_{d_{i}}t_{d'_i}&=((6i-5)\cdots (6i-1))^6.
\end{aligned}
\end{equation}
First, consider $C_{6n+2}^{6n+2}=(1\cdots (6n+2))^{6n+2}$.
\begin{align*}
&(1\dots (6n+2))^{6n+2}\\
=&C_{6n+1}C_{6n}C_{6n-1}^{6n} \cdot \tilde{C}_{6n} \tilde{C}_{6n+1}\tilde{C}_{6n+2} \\
=&C_{6n+1}C_{6n} \cdot (t_{d}  t_{d'}) \cdot (6n) \cdot \tilde{C}_{6n-1} \cdot (6n+1)(6n)\cdot \tilde{C}_{6n-1} \cdot (6n+2)(6n+1)(6n)\cdot \tilde{C}_{6n-1} \\
=&C_{6n+1}C_{6n} \cdot (t_{d} t_{d'}) \cdot (6n+1)(6n+2)(6n+1)\cdot (t_{6n+2}^{-1}t_{6n+1}^{-1})(6n) \\
&\cdot \tilde{C}_{6n-1}  \cdot t_{6n+1}^{-1}(6n)\cdot \tilde{C}_{6n-1} \cdot (6n)\cdot \tilde{C}_{6n-1}\\
=&C_{6n+1}C_{6n} \cdot (t_{d} t_{d'}) \cdot (6n+2)(6n+1)(6n+2)\cdot (t_{6n+2}^{-1}t_{6n+1}^{-1})(6n)  \\
&\cdot \tilde{C}_{6n-1} \cdot t_{6n+1}^{-1}(6n)\cdot \tilde{C}_{6n-1} \cdot (6n) \cdot \tilde{C}_{6n-1}\\
=&C_{6n+2}^2 \cdot (t_{d} t_{d'})  \cdot t_{{c}'_{1,9}} \cdot \tilde{C}_{6n-1}\cdot  t_{{c}'_{1,8}}\cdot \tilde{C}_{6n-1} \cdot t_{{c}'_{1,7}} \cdot \tilde{C}_{6n-1},
\end{align*}
where ${c}'_{1,7},  {c}'_{1,8},$ and ${c}'_{1,9}$ are defined by
\begin{align}
{c}'_{1,7}&=6n,\\ 
{c}'_{1,8}&=t_{6n+1}^{-1}(6n), \\ 
{c}'_{1,9}&=(t_{6n+2}^{-1}t_{6n+1}^{-1})(6n). 
\end{align}

Also, $C_{6n+2}^{6n}$ is given as
\begin{align*}
&C_{6n+2}^{6n}\cdot (t_{d} t_{d'})\\
=&C_{6n-1}^{6n}(6n \cdots 1)((6n+1) \cdots 2)((6n+2)\cdots 3) \cdot (t_{d} t_{d'})\\
=&(t_{d} t_{d'}) \cdot (6n+2)^{t_{6n}t_{6n+1}}\cdot (6n \cdots 1) ((6n+1) \cdots 2)((6n+1)\cdots 3)\cdot (t_{d} t_{d'})\\
=&(t_{d} t_{d'}) \cdot (6n+2)^{t_{6n}t_{6n+1}}\cdot ((6n-1)\cdots 1) (6n \cdots 1) ((6n+1) \cdots 2)\cdot (t_{d} t_{d'})\\
=&(t_{d} t_{d'}) \cdot (6n+2)^{t_{6n}t_{6n+1}}\cdot \tilde{C}_{6n-1} (6n+1)^{t_{6n}}\cdot(6n \cdots 1) (6n \cdots 2)\cdot (t_{d} t_{d'})\\
=&(t_{d} t_{d'}) \cdot (6n+2)^{t_{6n}t_{6n+1}}\cdot \tilde{C}_{6n-1}  (6n+1)^{t_{6n}}\cdot((6n-1)\cdots 1) (6n)  ((6n-1) \cdots 1)\cdot (t_{d} t_{d'})\\
=&(t_{d} t_{d'})^2 \cdot  t_{{c}'_{1,12}}\cdot \tilde{C}_{6n-1} \cdot t_{{c}'_{1,11}}\cdot \tilde{C}_{6n-1} \cdot t_{{c}'_{1,10}} \cdot \tilde{C}_{6n-1},
\end{align*}
where ${c}'_{1,10},  {c}'_{1,11},$ and ${c}'_{1,12}$ are defined by
\begin{align}
{c}'_{1,10}&=(t_{d} t_{d'})^{-1}(6n), \\
{c}'_{1,11}&= ((t_{d} t_{d'})^{-1}t_{6n})(6n+1), \\
{c}'_{1,12}&=((t_{d} t_{d'})^{-1}t_{6n}t_{6n+1})(6n+2).
\end{align}

From the two equations above, there is a repetitive pattern of words $t_{{c}'_{n,j}}\tilde{C}_{6n-1}$.
To re-arrange a sequence of words, write $a_i:=((6i-1)\cdots (6i-5)), b_i:= ((6n-1) \cdots (6i+1))$.
\begin{align*}
&t_{{c}'_{1,j}}\tilde{C}_{6n-1} \\
=&6^{t_{{c}'_{1,j}} b_1} t_{{c}'_{1,j}}((6n-1)\cdots 7) \cdot a_1 \\
=&6^{t_{{c}'_{1,j}} b_1} (12)^{t_{{c}'_{1,j}} b_2}\cdot t_{{c}'_{1,j}}  ((6n-1)\cdots 13) \cdot  a_2 a_1 \\
=&\cdots \\
=&6^{t_{{c}'_{1,j}} b_1} (12)^{t_{{c}'_{1,j}} b_2}\cdots (6n-6)^{t_{{c}'_{1,j}}b_{n-1}}t_{{c}'_{1,j}} \cdot a_n \cdots a_1 \\
=&t_{{c}'_{n,j}} t_{{c}'_{n-1,j}} \cdots t_{{c}'_{2,j}}t_{{c}'_{1,j}} \cdot a_n \cdots a_1,
\end{align*}
where ${c}'_{i,j}$, $i\neq 1$, is defined by
\begin{align}
{c}'_{n-i+1,j}:={t_{{c}'_{1,j}} b_i}(6i) \text{ }(i=1,\dots, n-1)
\end{align}
Note that ${c}'_{i,j}$ commutes with $a_k$ for $k\neq i$.

Therefore, $(1\dots (6n+2))^{12n}$ is given as follows.
\begin{align*}
&(1\dots (6n+2))^{12n}\\
=&(1\dots (6n+2))^{6n-2}(1\dots (6n+2))^{6n+2}\\
=&C_{6n+2}^{6n} (t_{d} t_{d'}) \cdot t_{{c}'_{1,9}} \cdot \tilde{C}_{6n-1}\cdot  t_{{c}'_{1,8}}\cdot \tilde{C}_{6n-1} \cdot t_{{c}'_{1,7}} \tilde{C}_{6n-1} \\
=&(t_{d} t_{d'})^2 t_{{c}'_{1,12}} \tilde{C}_{6n-1} \cdot t_{{c}'_{1,11}} \tilde{C}_{6n-1} \cdot t_{{c}'_{1,10}}  \tilde{C}_{6n-1}
 \cdot t_{{c}'_{1,9}}  \tilde{C}_{6n-1}\cdot  t_{{c}'_{1,8}} \tilde{C}_{6n-1} \cdot t_{{c}'_{1,7}} \tilde{C}_{6n-1} \\
=&(t_{d} t_{d'})^2 \prod_{j=12}^7\left( (\prod_{i=n}^1 t_{{c}'_{i,j}}) (a_n \cdots a_1) \right) \\
=&(t_{d} t_{d'})^2 \left(\prod_{j=12}^7 \prod_{i=n}^1 t_{c_{i,j}} \right) \cdot (a_n\cdots a_1)^6 \\
=&(t_{d} t_{d'})^2 \left(\prod_{j=12}^7 \prod_{i=n}^1 t_{c_{i,j}} \right) \cdot \left( \prod_{i=1}^n t_{d_{i}}t_{d'_{i}} \right),
\end{align*}
where $c_{i,j}$ is defined by
\begin{align} \label{eq:vc_comb2}
c_{i,j}= (a_{n-i+1})^{12-j}({c}'_{i,j}).\end{align}
Comparing this with the calculation in Section~\ref{sec:splitgen2}, we obtain the same curves.
\end{proof}


\appendix

\section{Error Estimates for the Approximations}
In this section, we describe the justification of linear approximations in Sections 4 and 5.
Since the arguments of both sections are similar, we only explain for Section 4.
Assume $n=3k$ with $k$ even.

Recall that $f(x,y)=x^2-y^3-1$, $\ep_n=(10 n)^{-1}$, and write $f=f(0,y)$ in this context.
Then the solutions of 
\begin{align} 
f^{2n}=\frac{3{\ep_n^{2n}}(f+1)}{(f+1)^k ((7n+3)f+6n+3)} \label{eq:app1}
\end{align}
are given by \[f \approx f_i = c_n {\ep_n}z_{2n}^i ,\]
where $c_n=\sqrt[2n]{\frac{1}{2n+1}}$ and $z_m=\exp(2\pi i/m)$.
In this process, we assume that we can approximate Equation~\eqref{eq:app1} as
\[f^{2n} =\frac{1}{2n+1} {\ep_n^{2n}}.\]
Since $f=-y^3-1$, applying one more linear approximation, we have the final solution
\[y\approx y_{i,j}=z_6^{2j+1}(1+\frac{c_n}{3}z_{2n}^i {\ep_n}).\] 

The main goal of this appendix is to prove that \[|y-y_{i,j}|=O\left(\frac{1}{n^2}\right).\]
Since the minimum distance between two distinct approximated solutions $y_{i,j}$ is $O(1/n)$,
while the approximation error is $O(1/n^2)$,
each true solution is uniquely associated with its corresponding approximated solution for sufficiently large $n$.

To prove this, define a rational map $g(x)$ as
\[g(x):=\frac{3{\ep_n^{2n}}(x+1)^{1-k} }{(7n+3)x+6n+3}.\]
The map $g$ is holomorphic on $\C\setminus \{ -1, -\frac{6n+3}{7n+3}\}$ for $k\ge 2$.
We first need to approximate the roots of \[x^{2n}=g(x)\]
close to the origin.

\begin{lem}\label{lem:app1}
Choose $r_n=2c_n\ep_n.$
Then the number of solutions of \[x^{2n}=g(x)\] in the disk \[D_n=\{|x|\le r_n\}\] is $2n$.
Moreover, for each $i=0,\dots,2n-1$, there exists a holomorphic branch $h_i$ of $g^{1/2n}$ on $D_n$ such that
\[h_i(0)=f_i,\]
and the equation
\[x=h_i(x)\]
has a unique solution \(x_i\) in \(D_n\).
The \(2n\) solutions \(x_i\) are precisely the solutions of
\(x^{2n}=g(x)\) in \(D_n\).
\end{lem}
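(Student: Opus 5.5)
The plan is to treat $x^{2n}=g(x)$ on $D_n$ as a small perturbation of $x^{2n}=\frac{1}{2n+1}\ep_n^{2n}$. Both counting and localization will come from Rouché's theorem, and uniqueness within each branch from the contraction mapping principle.

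\textbf{Step 1: $g$ is almost constant on $D_n$.} Write $g(x)=g(0)\,u(x)$, where $g(0)=\frac{3\ep_n^{2n}}{6n+3}=\frac{\ep_n^{2n}}{2n+1}=(c_n\ep_n)^{2n}$ and
\[
u(x)=\frac{(1+x)^{1-k}}{1+\frac{7n+3}{6n+3}x}.
\]
On $D_n$ we have $|x|\le r_n=2c_n\ep_n\le \frac{1}{5n}$ and $k=n/3$. Hence $|(1-k)\log(1+x)|\le C k|x|\le C'/15$, and $\frac{7n+3}{6n+3}|x|\le C''/n$. So $u$ is holomorphic and nonvanishing on a neighbourhood of $D_n$, with $|u(x)-1|\le \eta$ for some fixed $\eta<1$ (of size about $1/10$, uniformly in $n$). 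In particular $\log u$ has a holomorphic branch with $|\log u|\le \eta'$ small, and $|u'(x)|=O(k)=O(n)$ on $D_n$ by the Cauchy estimate or by direct differentiation.

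\textbf{Step 2: counting via Rouché.} On $|x|=r_n$ we have $|x^{2n}|=2^{2n}(c_n\ep_n)^{2n}$, while $|g(x)|\le (1+\eta)(c_n\ep_n)^{2n}$, which is strictly smaller since $2^{2n}\ge 4>1+\eta$. Rouché's theorem then shows that $x^{2n}-g(x)$ has exactly $2n$ zeros in $D_n$, counted with multiplicity.

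\textbf{Step 3: branches and contraction.} Set
\[
h_i(x)=c_n\ep_n z_{2n}^i\exp\bigl(\tfrac{1}{2n}\log u(x)\bigr).
\]
These are the $2n$ holomorphic branches of $g^{1/2n}$ on $D_n$, with $h_i(0)=f_i$ (this uses $\log u(0)=0$). We need two estimates.
\begin{enumerate}[(a)]
\item $|h_i(x)|\le c_n\ep_n e^{\eta'/2n}<r_n$, so $h_i$ maps $D_n$ into itself.
\item $|h_i'(x)|=|h_i(x)|\,\frac{1}{2n}\left|\frac{u'}{u}\right|\le c_n\ep_n\cdot O(1)=O(1/n)<1$, so $h_i$ is a contraction.
\end{enumerate}
Banach's fixed point theorem (equivalently, Rouché applied to $x-h_i(x)$) then gives a unique solution $x_i\in D_n$ of $x=h_i(x)$.

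Each $x_i$ solves $x^{2n}=g(x)$. The $x_i$ are distinct because $x_i-f_i=h_i(x_i)-h_i(0)=O(r_n/n)$, whereas $|f_i-f_{i'}|\gtrsim c_n\ep_n/n$. This separation needs the constant checked; if it is tight, I will instead argue that $x_i=x_{i'}$ would force $h_i(x)=h_{i'}(x)$ at that point, which is impossible because the branches differ by the factor $z_{2n}^{i-i'}\neq1$. Conversely, any root $x\in D_n$ satisfies $x=\omega h_0(x)$ for some $2n$-th root of unity $\omega$, that is, $x=h_i(x)$ for some $i$. By the count in Step 2, these $2n$ points are exactly the solutions, and they are all simple.

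\textbf{Main obstacle.} The delicate point is making the bound $|u-1|\le\eta$ uniform in $n$. The factor $(1+x)^{1-k}$ has exponent of order $n$, so it is controlled only because $r_n=O(1/n)$, and the constants must be tracked carefully using the explicit choice $\ep_n=(10n)^{-1}$. I will bound $|\log(1+x)|\le 2|x|$ and verify the numbers explicitly.
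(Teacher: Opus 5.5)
Your proposal is correct and follows essentially the same route as the paper: Rouch\'e's theorem on $|x|=r_n$ to count $2n$ zeros, then the $2n$ branches $h_i$ of $g^{1/2n}$ on the simply connected disk $D_n$, each with a unique fixed point (the paper gets this from Rouch\'e applied to $x-h_i(x)$ using $|h_i(x)|<|x|$ on $\partial D_n$, which is your estimate (a), rather than from a contraction). Your argument that the $x_i$ are distinct (branches differ by $z_{2n}^{i-i'}\neq 1$ and $h_{i'}$ never vanishes) and exhaust all roots is a useful addition, since the paper states this but does not prove it; tracking the explicit constants from $\ep_n=(10n)^{-1}$, as you propose, would also remove the paper's ``sufficiently large $n$'' caveat.
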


\begin{proof}
To prove the first claim, use Rouche's theorem: if $|g(x)|<|x^{2n}|$ for $|x|=r_n$, then $x^{2n}-g(x)$ and $x^{2n}$ have the same number of roots in $|x|\le r_n$, counted with multiplicity.

For $|x|=r_n$, \[|x^{2n}|= \frac{2^{2n}\ep_n^{2n}}{2n+1}.\]
On the other hand,
\[|g(x)|\le\frac{3\ep_n^{2n}(1-r_n)^{1-k}}{6n+3-(7n+3)r_n}.\]

Since \(r_n=O(1/n)\), the right-hand side is bounded by \(2 \ep_n^{2n}/(2n+1)\) for large $n$.
Therefore, $\frac{2 \ep_n^{2n}}{2n+1}<\frac{2^{2n}\ep_n^{2n}}{2n+1}$ implies 
\[|g(x)|<|x^{2n}|\]
for sufficiently large \(n\).
By Rouché's theorem, \(x^{2n}-g(x)\) has exactly \(2n\) zeros in \(D_n\), counted with multiplicity.

Since $D_n$ is simply connected and $g$ has no zeros on $D_n$, 
there exists $2n$ holomorphic branch $h_i$ ($i=0,\dots, 2n-1$) of \( g^{1/2n}\) on \(D_n\) such that $h_i^{2n}=g$ on $D_n$.
It is clear that \[h_i(0)=f_i=c_n \ep_n z_{2n}^i\]
and a solution $x_i$ of $x=h_i(x)$ is the solution of $x^{2n}=g(x)$.
Since $|h_i(x)|<|x|$ for $x\in \partial D_n$, by Rouche's theorem, $x-h_i(x)$ has a unique zero $x_i\in D_n$.
\end{proof}

Now we measure the distance between the true solution $x_i$ and the approximated solution $f_i$.
\begin{lem} \label{lem:app2}
$|x_i-f_i|=O\left(\frac{1}{n^2}\right)$ as $n\arrowr \infty$.
\end{lem}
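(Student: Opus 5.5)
We estimate $|x_i-f_i|$ directly from the fixed-point equation $x_i=h_i(x_i)$ supplied by Lemma~\ref{lem:app1}. Since $f_i=h_i(0)$, we have
\[
x_i-f_i=h_i(x_i)-h_i(0),
\]
so it suffices to bound $|h_i'|$ on $D_n$ and apply the mean value inequality along the segment from $0$ to $x_i$, which lies in the convex disk $D_n$.

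\emph{Step 1: the logarithmic derivative.} Because $h_i^{2n}=g$ and $g$ has no zeros on $D_n$,
\[
\frac{h_i'(x)}{h_i(x)}=\frac{1}{2n}\,\frac{g'(x)}{g(x)}=\frac{1}{2n}\left(\frac{1-k}{x+1}-\frac{7n+3}{(7n+3)x+6n+3}\right).
\]
For $|x|\le r_n=O(1/n)$ the two denominators are bounded below: $|x+1|\ge 1/2$ and $|(7n+3)x+6n+3|\ge 6n+3-(7n+3)r_n\ge 3n$ for large $n$. With $k=n/3$ this gives
\[
\left|\frac{h_i'}{h_i}\right|\le \frac{1}{2n}\left(2k+\frac{7n+3}{3n}\right)=O(1).
\]

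\emph{Step 2: bound $h_i$ itself.} We have $|h_i(x)|=|g(x)|^{1/2n}$. The estimate used in Lemma~\ref{lem:app1} gives $|g(x)|\le 2\ep_n^{2n}/(2n+1)$, so $|h_i(x)|\le 2^{1/2n}c_n\ep_n=O(1/n)$. Together with Step 1, this yields $|h_i'(x)|=O(1/n)$ uniformly on $D_n$.

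\emph{Step 3: conclude.} Since $|x_i|\le r_n=2c_n\ep_n=O(1/n)$,
\[
|x_i-f_i|\le \sup_{D_n}|h_i'|\cdot|x_i|=O(1/n)\cdot O(1/n)=O(1/n^2).
\]

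The main point requiring care is Step 1. The exponent $1-k$ grows linearly in $n$, so $g'/g$ is of size $O(n)$. It is precisely the factor $1/(2n)$ coming from the $2n$-th root that brings this back to $O(1)$. The argument therefore needs $|x+1|$ bounded away from $0$ uniformly on $D_n$, which holds since $r_n\to 0$. If one wanted sharper constants, the Schwarz--Pick or Cauchy estimate on a slightly larger disk would serve the same purpose.
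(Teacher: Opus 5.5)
Your proof is correct and follows essentially the same route as the paper. You write $x_i-f_i=h_i(x_i)-h_i(0)$, bound the logarithmic derivative $\frac{1}{2n}\lvert g'/g\rvert$ by $O(1)$ and $\lvert h_i\rvert$ by $O(1/n)$ on $D_n$ to get $\lvert h_i'\rvert=O(1/n)$, and then apply the mean value inequality over a segment of length at most $r_n=O(1/n)$.
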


\begin{proof}
We have used the estimation $x_i=h_i(x_i) \approx h_i(0)=f_i$.
The error of estimation is calculated as
\[|x_i-f_i|=|h_i(x_i)-h_i(0)|\le r_n \sup_{|z|\le r_n}|h_i'(z)|.\]

We claim that \[|h_i'(x)|=O\left(\frac{1}{n}\right)\] as $n \arrowr \infty$.
Note that 
\begin{align*}
|h_i(x)|&=c_n \ep_n |1+x|^{\frac{1-k}{2n}}|6n+3+(7n+3)x|^{-\frac{1}{2n}} \\
&\le c_n \ep_n (1-r_n)^{\frac{1}{2n}-\frac{1}{6}}(6n+3 -(7n+3)r_n)^{-\frac{1}{2n}}=O\left(\frac{1}{n}\right).
\end{align*}
Since
\begin{align*}
\left|\frac{h_i'(x)}{h_i(x)}\right|=\frac{1}{2n}\left|\frac{g'(x)}{g(x)}\right|&=\frac{1}{2n}\left| \frac{1-k}{1+x} - \frac{7n+3}{6n+3+(7n+3)x}\right| \\
&\le \frac{1}{2n}\left( \frac{k-1}{1-r_n} + \frac{7n+3}{6n+3-(7n+3)r_n}\right)=O(1),
\end{align*}
we have 
\begin{align*}
|h_i'(x)|\le |h_i(x)| O(1)=O\left(\frac{1}{n}\right).
\end{align*} 
Therefore
\[|x_i-f_i|\le r_n \sup_{D_n}|h_i'| =O\!\left(\frac1n\right)O\!\left(\frac1n\right)=O\!\left(\frac1{n^2}\right).\]
\end{proof}

Since $f-f_i=-(y^3-y_{i,j}^3)=-(y-y_{i,j})(y^2+y y_{i,j}+y_{i,j}^2)$
and \[m<|y^2+y\,y_{i,j}+y_{i,j}^2|<M\]
for some constants $m,M>0$, independent of $n$,
this implies
\[|y-y_{i,j}|=O(|f-f_i|).\]
Therefore
\[|y-y_{i,j}|=O\!\left(\frac1{n^2}\right),\]
which proves the claim.


\section*{Acknowledgements}
The author would like to thank Jongil Park, Ki-Heon Yun, and Ju A Lee 
for many helpful discussions and valuable suggestions. 
The author was supported by the National Research Foundation of Korea (NRF) grant funded by the Korea government (RS-2024-00392067).

\section*{Conflict of Interest}
The author declares that there is no conflict of interest.


\end{document}